\documentclass{amsart}
\usepackage{yufei}
\renewcommand{\phi}{\varphi}
\definecolor{WHITE}{RGB}{0, 0, 0}
\setlength{\parindent}{0em}
\newcommand{\surp}{\mathsf{sp}}
\newcommand{\mc}{\mathsf{mc}}

\title{An Alon--Boppana--type bound for very dense graphs, with applications to max--cut}
\author{Shengtong Zhang}
\address{Department of Mathematics, Stanford University, USA.}
\email{stzh1555@stanford.edu}
\begin{document}
\maketitle
\begin{abstract}
For any $\epsilon > 0$, we show that if $G$ is a regular graph on $n \gg_\epsilon 1$ vertices that is $\epsilon$-far (differs by at least $\epsilon n^2$ edges) from any Tur\'{a}n graph, then its second eigenvalue $\lambda_2$ satisfies
$$\lambda_2 \geq n^{1/4 - \epsilon}.$$   
The exponent $1/4$ is optimal. Our result generalizes an analogous bound, independently obtained by Balla, R\"{a}ty---Sudakov--Tomon, and Ihringer, which only applies to graphs with density at most $\frac{1}{2}$. Up to a lower--order factor, this confirms a conjecture of R\"{a}ty, Sudakov and Tomon.

Our spectral approach has interesting applications to max--cut. First, we show that if a graph $G$, on $n \gg_\epsilon 1$ vertices and $m$ edges, is $\epsilon$-far from a disjoint union of cliques, then it has a max--cut of size at least
$$\frac{m}{2} + n^{1.01}.$$
Our result improves upon a classical result of Edwards by a non--trivial polynomial factor, making progress towards another conjecture of R\"{a}ty, Sudakov and Tomon.

As another application of our method, we show that if a graph $G$ is $H$-free and has $m$ edges, then $G$ has a max--cut of size at least
$$\frac{m}{2} + c_H m^{0.5001}$$
where $c_H > 0$ is some constant depending on $H$ only. This result makes progress towards a conjecture of Alon, Bollob\'{a}s, Krivelevich and Sudakov, and answers recent questions by Glock--Janzer--Sudakov and Balla--Janzer--Sudakov.
\end{abstract}
\section{Introduction}
Spectral parameters of graphs encode a remarkable amount of combinatorial information. Consider a $d$-regular graph $G$ on $n$ vertices with adjacency matrix eigenvalues
\[
d=\lambda_{1}\ge\lambda_{2}\ge\cdots\ge\lambda_{n}.
\]
The \emph{Alon–Boppana bound} \cite{Alon1991} is one of the cornerstones of spectral graph theory. It states that, if $G$ has diameter $D$, then the second eigenvalue of $G$ is lower-bounded by
\begin{equation}
    \label{eq:Alon-Boppana}
  \lambda_{2} \geq 2\sqrt{d-1}- \frac{2\sqrt{d-1} - 1}{\floor{D / 2}}.  
\end{equation}
For sparse graphs with $d = n^{o(1)}$, we have $\lambda_2 = (2 - o(1)) \sqrt{d - 1}$. This bound plays an important role in the theory of expanders, leading to the rich studies of Ramanujan graphs and optimally pseudorandom $K_t$-free graphs.

The original Alon--Boppana bound \eqref{eq:Alon-Boppana} is useful for sparse graphs. It does not tell us much about $\lambda_2$ when $D \leq 3$, and thus is no longer interesting for dense graphs with $d = \omega(n^{1/3})$. Recently, three groups of authors independently discovered an analog of the Alon--Boppana bound in the dense regime. Their results state that, when $d \leq \left(\frac{1}{2} - \epsilon\right) n$, the second-eigenvalue of $G$ is lower bounded by
$$\lambda_2 = \Omega_\epsilon(n^{1/3}).$$
Their works illustrate the connections between this bound and a number of interesting combinatorial objects. Balla \cite{Balla2024} put this bound in the context of equiangular lines, R\"{a}ty--Sudakov--Tomon \cite{RST2023} showed how this bound arises from their study of discrepancy, and Ihringer \cite{Ihringer2023} proved this bound through classical techniques in the literature on distance regular graphs \cite{BCN89}. Later, Balla--R\"{a}ty--Sudakov--Tomon \cite{BRST} gave a short and purely linear algebraic proof of this bound. 

The assumption $d \leq \left(\frac{1}{2} - \epsilon\right) n$ is necessary. The complete bipartite graphs $K_{n/2, n/2}$, and in general all balanced Tur\'{a}n graphs, satisfy $\lambda_2 = 0$. In addition, Weyl's inequality shows that if a small number of edges are added or deleted from a Tur\'{a}n graph, the resulting graph still has small second eigenvalue.

Are the Tur\'{a}n graphs the only obstruction to an Alon--Boppana bound? More precisely, we say two graphs $G$ and $H$ are \emph{$\epsilon$-close} if starting from $G$, we can get a graph isomorphic to $H$ by removing or adding at most $\epsilon |V|^2$ edges. We say they are \emph{$\epsilon$-far} otherwise. Motivated by their work on discrepancy theory, R\"{a}ty, Sudakov and Tomon made a bold conjecture: an analog of the Alon--Boppana bound holds if $G$ is far from any Tur\'{a}n graph.
\begin{conjecture}[\cite{RST2023}, Conjecture 8.2]
    \label{conj:second-eigenvalue}
    For any $\epsilon > 0$, there exists a $c > 0$ such that the following holds. If $G$ is a regular graph on $n$ vertices that is $\epsilon$-far from any Tur\'{a}n graph (including the empty one), then 
    $$\lambda_2 \geq cn^{1/4}.$$
\end{conjecture}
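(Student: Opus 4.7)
The plan splits by the edge density of $G$. When $d \leq (\tfrac{1}{2} - \epsilon)n$, the existing work of Balla, R\"{a}ty--Sudakov--Tomon, and Ihringer already gives $\lambda_2 \geq c_\epsilon n^{1/3} \gg cn^{1/4}$, so the conjecture is immediate. The genuine content therefore concerns dense graphs with $d \geq (\tfrac{1}{2} - \epsilon)n$, which I will further split according to the distance of $d/n$ to the Tur\'{a}n densities $1 - 1/r$ for $r = 2, 3, 4, \ldots$

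The first sub-case is when $d/n$ is bounded away from every density $1 - 1/r$. Here $G$ is automatically $\Omega(\epsilon)$-far from every Tur\'{a}n graph for degree reasons alone, and the goal is to extract a lower bound on $\lambda_2$ that does not rely on additional closeness information. I would produce this by exhibiting a test vector $u \perp \mathbf{1}$ as a linear combination of indicator vectors of carefully chosen subsets, whose quadratic form $u^T A u / \|u\|^2$ can be estimated using the edge counts of $G$ (which must deviate from the ``nearest" balanced $r$-partite configuration by $\Omega_\epsilon(n^2)$ edges because $d$ sits strictly between two Tur\'{a}n degrees). This is intended as a direct generalization of the BRST linear-algebraic proof, reusing its duality between intra-set and inter-set edge counts.

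The second and harder sub-case is when $d$ is within $o_\epsilon(n)$ of some density $(1 - 1/r)n$, so that $G$ is combinatorially constrained to resemble $T_r(n)$. Assume for contradiction $\lambda_2 \leq cn^{1/4}$ for a small absolute constant $c$. One expects $G$ to be highly pseudorandom apart from its $\lambda_n$-eigenspace, which should mirror the large-negative spectrum of $T_r(n)$; standard perturbation arguments would then produce a candidate equipartition $V_1 \cup \cdots \cup V_r$ whose block indicators are close to the bottom $r-1$ eigenvectors of $A$. Define the discrepancy matrix $D \in \mathbb{R}^{r \times r}$ whose $(i,j)$-entry records the deviation of $e(V_i, V_j)$ (or $2e(V_i)$ on the diagonal) from its value in $T_r(n)$; the farness hypothesis forces $\|D\|_1 \geq \epsilon n^2$. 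I would then convert this into a lower bound $u^T A u \geq c_\epsilon n^{1/4}\|u\|^2$ by choosing $u$ as a signed sum of the $\mathbf{1}_{V_i}$ with coefficients taken from the top singular vector of $D$, projected away from both $\mathbf{1}$ and the putative $\lambda_n$-eigenspace.

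The main obstacle is that Tur\'{a}n graphs themselves satisfy $\lambda_2 = 0$ while having $\lambda_n = -n/r$; trace-moment arguments bound $\max(\lambda_2, |\lambda_n|)$ rather than $\lambda_2$ alone, and $\epsilon$-farness from Tur\'{a}n graphs places no direct constraint on the negative spectrum. Executing the projection step so that $u^T A u$ reflects $\lambda_2$ rather than $|\lambda_n|$ is therefore the most delicate point, and controlling the cross terms arising from the projection without losing any polynomial factors is where the argument can easily fail. Finally, obtaining the sharp constant $cn^{1/4}$ (as opposed to the $n^{1/4 - \epsilon}$ proved in the main body) precludes any appeal to the regularity lemma or other iterative approximation scheme; both the extraction of the partition $V_1 \cup \cdots \cup V_r$ and the discrepancy-to-spectrum transfer must be carried out purely linear-algebraically, in the style of the BRST proof.
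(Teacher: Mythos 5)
This statement is a conjecture that the paper does not fully prove: Theorem~\ref{thm:main} establishes it only up to a lower--order factor ($\lambda_2 \geq n^{1/4-\epsilon}$ rather than $cn^{1/4}$), by passing to the complement and bounding the least eigenvalue of graphs far from disjoint unions of cliques. Your proposal aims at the full conjecture, but both of its main steps have genuine gaps.

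First, your ``easy'' sub-case --- $d/n$ bounded away from every Tur\'{a}n density $1-1/r$ --- is not easy, and is in fact most of the content of the problem. Knowing that the edge counts deviate by $\Omega_\epsilon(n^2)$ from every balanced $r$-partite configuration does not let you write down a test vector $u \perp \mathbf{1}$ with $u^\top A u > 0$, let alone $u^\top A u \geq c n^{1/4}\|u\|^2$: the trace identities $\sum_{i\ge 2}\lambda_i = -d$ and $\sum_{i\ge2}\lambda_i^2 = d(n-d)$ are consistent with all of the non-trivial spectral mass sitting on a few very negative eigenvalues, and nothing in the degree hypothesis alone rules this out. The BRST argument you want to ``directly generalize'' relies crucially on $d \leq n/2$ (equivalently, on the complement having density at least $1/2$), and extending it past that threshold is precisely what requires the new machinery of this paper: the identity $A_G = A_G \circ A_G$ fed into the spectral decomposition, tested against Gaussian vectors on the span of $\{\bv_i\circ\bv_j\}$, yielding the recursion $S_T^2 \leq 2nS_{T^2/4n}$ of Lemma~\ref{lem:recursive}, which is then iterated and converted to structure via spectral partitioning.

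Second, in your hard sub-case the ``standard perturbation argument'' extracting an equipartition from the bottom $r-1$ eigenvectors presupposes that the negative spectrum of $G$ resembles that of $T_r(n)$ (an eigenvalue near $-n/r$ with multiplicity $r-1$). You correctly observe in your final paragraph that $\epsilon$-farness places no direct constraint on the negative spectrum --- but that observation defeats this step rather than merely complicating it; establishing that the spectrum must concentrate in this way \emph{is} the theorem, and you offer no mechanism for it. Finally, note that even the paper's proof of the weaker bound invokes the induced graph removal lemma (Lemma~\ref{lem:cherries}), which is the source of the $n^{-\epsilon}$ loss; obtaining the clean $cn^{1/4}$ of the conjecture remains open, so a correct proof of the statement as literally written would go beyond the paper.
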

R\"{a}ty, Sudakov and Tomon observed that the exponent $1/4$ is optimal by a construction of de Caen \cite{deCaen}. Passing to the complement, the conjecture can be reformulated as follows.
\begin{conjecture}
    \label{conj:least-eigenvalue}
    For any $\epsilon > 0$, there exists a $c > 0$ such that the following holds. If $G$ is a regular graph on $n$ vertices that is $\epsilon$-far from a disjoint union of cliques, then 
    $$\lambda_n \leq -cn^{1/4}.$$
\end{conjecture}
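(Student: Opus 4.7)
My first step is to reduce Conjecture~\ref{conj:least-eigenvalue} to Conjecture~\ref{conj:second-eigenvalue} by complementation. For any $d$-regular graph $G$ on $n$ vertices, $\bar G$ is $(n{-}1{-}d)$-regular, and the non-principal eigenvalues of $A(\bar G) = J - I - A$ are $-1 - \lambda_i(G)$ for $i \geq 2$, giving $\lambda_2(\bar G) = -1 - \lambda_n(G)$. Complementation exchanges disjoint unions of cliques with balanced complete multipartite (Tur\'an) graphs and preserves $\epsilon$-farness, so the two conjectures are equivalent. I will therefore aim to prove the $\lambda_2$ statement for $H := \bar G$, which is $\epsilon$-far from every Tur\'an graph.

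The core approach is a polynomial trace method. Choose $r$ so that the Tur\'an graph $T_r(n)$, with part size $s = n/r$, most closely matches the density of $H$; the $\epsilon$-far hypothesis forces $d(H) \approx (1 - 1/r)n$ for this $r$. The spectrum of $A(T_r(n))$ is contained in $\{n-s, 0, -s\}$, so the cubic
\[
P(x) \;=\; x(x+s)(x - (n - s))
\]
annihilates $A(T_r(n))$. For our graph $H$, the Frobenius norm $\|P(A(H))\|_F^2 = \sum_i P(\lambda_i(H))^2$ quantifies how spectrally close $H$ is to the Tur\'an ideal. Under the contrapositive hypothesis $\lambda_2(H) < c n^{1/4}$, together with the moment identities $\mathrm{tr}(A(H)^k) = \sum_i \lambda_i(H)^k$ for $k=1,2,3$ (which determine edge, triangle, and degree counts of $H$), one can pin the bulk of the non-principal spectrum near the pair $\{0, -s\}$, and so bound $\|P(A(H))\|_F^2$ from above by an appropriate power of $c n^{1/4}$.

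The matching lower bound is a stability statement and the heart of the proof: if $\|P(A(H))\|_F^2$ is small, then $H$ must be close in edit distance to $T_r(n)$. The natural route is to extract an approximate Tur\'an partition from the eigenvectors associated with eigenvalues near $-s$ (via thresholding coordinates, say), then count defects --- within-part edges and across-part non-edges --- each of which contributes $\Omega(1)$ to $(P(A(H)))_{ij}^2$ through the walk-counting expansion $P(A(H)) = A^3 - (n-2s)A^2 - s(n-s)A$. Combined with the $\epsilon$-far hypothesis, this should yield $\|P(A(H))\|_F^2 \gtrsim \epsilon \cdot n^{c_0}$ for a suitable $c_0$; balancing this against the upper bound forces $\lambda_2(H) \geq c n^{1/4 - o(1)}$.

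The main obstacle is calibrating the stability step to recover the tight $n^{1/4}$ exponent from de Caen's construction. The earlier results of Balla, R\"aty--Sudakov--Tomon, and Ihringer restricted to $d \leq (\tfrac12 - \epsilon)n$ because in that regime only the trivial Tur\'an graph (the empty one) is relevant, and no partition must be recovered. In the dense regime one must simultaneously identify the correct $r$, extract a near-Tur\'an partition from noisy spectral data, and rule out constructions like de Caen's where many low-level perturbations remain consistent with small $\lambda_2$. I expect this last step to require analyzing $P(A(H))$ entry-by-entry on the candidate parts rather than through the Frobenius norm alone, and possibly replacing the cubic $P$ with a higher-degree polynomial tuned to the extremal configuration.
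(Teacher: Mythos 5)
Your complementation reduction is correct and matches the paper's framing, but from that point on the proposal diverges into a polynomial trace method that has a genuine gap at its center. You propose the cubic $P(x) = x(x+s)(x-(n-s))$ that annihilates the Tur\'an spectrum and claim that ``the contrapositive hypothesis $\lambda_2(H) < cn^{1/4}$, together with the moment identities $\mathrm{tr}(A(H)^k) = \sum_i \lambda_i(H)^k$ for $k=1,2,3$\dots\ [can] pin the bulk of the non-principal spectrum near the pair $\{0,-s\}$.'' That claim is the heart of the matter and it does not follow from what you have. The hypothesis $\lambda_2(H) < cn^{1/4}$ says nothing about the \emph{negative} eigenvalues of $H$ (equivalently, the large \emph{positive} eigenvalues of $G$), and the first three moments only give you averaged information — the constraints $\sum \lambda_i = 0$, $\sum \lambda_i^2 = 2m$, $\sum \lambda_i^3 = 6t$ leave the negative spectrum free to spread over a wide range rather than cluster at $-s$. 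Consequently $\|P(A_H)\|_F^2 = \sum_i P(\lambda_i)^2$ can be enormous even when $\lambda_2$ is tiny, and your upper bound never materializes. This is exactly the obstacle the paper identifies with the Neumaier/SRG-style arguments: trace identities and annihilating polynomials work when the spectrum is known to take only a few values, but ``do not translate immediately to arbitrary graphs.''

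The paper's actual route is quite different. It works directly with the least-eigenvalue formulation and never mentions Tur\'an partitions or an annihilating polynomial. The decisive new ingredient is the recursive inequality (Lemma~\ref{lem:recursive}): for $T \geq 4(-\lambda_n)\sqrt{n}$, $S_T^2 \leq 2n\,S_{T^2/4n}$, where $S_T$ is the sum of eigenvalues $\geq T$. This is proved from the identity $A_G = A_G \circ A_G$ (a structural fact only available because $A_G$ has $\{0,1\}$ entries) by testing against a Gaussian vector on the span of $\{\bv_i \circ \bv_j : i,j \in L_T\}$. Iterating this recursion (Lemma~\ref{lem:top-concentration}) yields precisely the spectral concentration you need and cannot get from low-order moments: the squared mass of eigenvalues of magnitude $o(n)$ is $o(n^2)$. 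Only then does the paper pass to structure (Lemma~\ref{lem:spectrum-to-clique}), and it does so not by extracting a Tur\'an partition from eigenvectors near $-s$, but by a spectral-partitioning argument combined with the induced removal lemma for cherries $K_{1,2}$. To repair your plan you would need some substitute for Lemma~\ref{lem:recursive} that controls the negative spectrum of $H$; as written, your argument takes the hard step — concentration of the bulk spectrum — as nearly given, when it is in fact the theorem's main contribution.
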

We note that there is a large body of work on graphs with bounded least eigenvalue $\lambda_n = O(1)$; see for example the survey \cite{KCY2021}. In particular, graphs with least eigenvalue at least $-2.019$ admit explicit classifications \cite{AcharyaJiang24, Eigenvalueneg2}, and Kim--Koolen--Yang \cite{KKY2016} developed a structure theorem for graphs with bounded $\lambda_n$. This structure theorem can show bounds such as $\lambda_n \to -\infty$ in \cref{conj:least-eigenvalue}. However, they remarked that their Ramsey--theoretic approach might not be able to give polynomial--type bounds. For strongly regular graphs (SRG), a classical result of Neumaier \cite{Neumaier79} shows that a primitive SRG with the least eigenvalue $g\lambda_n$ has degree at most $O(\lambda_n^6)$  unless it belongs to two explicit infinite families; this was recently improved to $O(-\lambda_n^5)$ by Koolen--Lv--Markowsky--Park \cite{KLMP}. These proofs depend on the highly symmetric nature of SRGs and the fact that they have exactly $3$ distinct eigenvalues, so they do not translate immediately to arbitrary graphs.

In this paper, we prove \cref{conj:least-eigenvalue} up to a lower--order factor even when $G$ is not regular.
\begin{theorem}
    \label{thm:main}
    For any $\epsilon > 0$, the following holds for all $n \gg_{\epsilon} 1$.\footnote{This notation means that there exists a constant $n_0(\epsilon)$ depending only on $\epsilon$ such that the result holds for all $n \geq n_0(\epsilon)$.} If $G$ is a graph on $n$ vertices that is $\epsilon$-far from a disjoint union of cliques, then 
    $$\lambda_n \leq -n^{1/4 - \epsilon}.$$
\end{theorem}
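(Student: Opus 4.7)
The strategy is to certify $\lambda_n \leq -n^{1/4-\epsilon}$ by distilling a combinatorial substructure from $G$ and producing an explicit test vector in the Rayleigh quotient. Since a disjoint union of cliques has $\lambda_n = -1$, the obstruction to $\lambda_n$ being close to $0$ must stem from failures of transitivity --- induced paths $u$--$v$--$w$ with $uw \notin E(G)$. The first step is to turn the qualitative ``$\epsilon$-far'' hypothesis into a quantitative excess of such cherries: any attempt to partition $V(G)$ into cliques leaves $\Omega_\epsilon(n^2)$ edge errors, and standard averaging then produces many ``defect partners'' --- pairs $u, v$ either adjacent with neighborhood difference of size $\Omega_\epsilon(n)$, or non-adjacent with common neighborhood of size $\Omega_\epsilon(n)$.

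With this defect profile in hand, the plan is to distill a large bipartite-like block: two disjoint vertex sets $U, W$ of size $s$ each, with almost every pair in $U \times W$ an edge and almost no pair inside $U$ or $W$ an edge. A dependent-random-choice or neighborhood-sampling argument, applied to the common neighborhoods of well-chosen defect pairs, should produce such a block. Taking $x = \mathbf{1}_U - \mathbf{1}_W$ as a test vector then gives
\[
\lambda_n \;\leq\; \frac{x^\top A(G) x}{x^\top x} \;=\; \frac{2(e(U) + e(W)) - 2 e(U,W)}{|U| + |W|} \;\leq\; -\Omega(s),
\]
so the task reduces to securing $s \geq n^{1/4-\epsilon}$.

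The main obstacle is obtaining the optimal exponent $1/4$. The existing Balla / R\"{a}ty--Sudakov--Tomon / Ihringer machinery only yields polynomial bounds (with exponent $1/3$) when the graph has density bounded away from $\tfrac12$, so it cannot be invoked directly in our setting where both $G$ and $\bar G$ may be dense. I expect that to reach $n^{1/4-\epsilon}$ one must analyze the defect pairs with a second-moment / Cauchy--Schwarz argument: from the defect profile, the count of ``defect $4$-cycles'' (roughly, $4$-cycles $u_1 v_1 u_2 v_2$ where $u_1 u_2$ and $v_1 v_2$ are both defect pairs) should be $\Omega_\epsilon(n^4)$, and a supersaturation / blow-up lemma then yields a $K_{s,s}$-like block with $s \approx n^{1/4}$. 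The tight de Caen example, essentially a blow-up of a bipartite graph with controlled codegrees, should guide the precise second-moment calculation and explain why $1/4$ --- rather than $1/3$ or $1/5$ --- is the right threshold.

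Finally, because $G$ is not assumed regular, the clean identity $\lambda_2(A(\bar G)) = -1 - \lambda_n(A(G))$ valid for regular graphs is unavailable, so one cannot simply dualize to a known $\lambda_2$ bound. I would handle this either by passing to an approximately regular dense induced subgraph via a dyadic degree partition, verifying that the $\epsilon$-far hypothesis descends up to a loss of constants, or by working throughout with weighted Rayleigh quotients and subtracting off the Perron contribution by a rank-one perturbation. Both routes are routine in principle; the real work is to make the irregular-case reduction compatible with the extraction feeding the $1/4$ exponent in the previous step.
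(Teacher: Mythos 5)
Your approach is genuinely different from the paper's, and it contains a gap that I do not think can be repaired in the form you describe. The paper argues contrapositively and \emph{entirely spectrally}: it assumes $\lambda_n \geq -n^{1/4-\epsilon}$, derives a recursive inequality $S_T^2 \leq 2n S_{T^2/4n}$ on the tail sums $S_T = \sum_{\lambda_i \geq T}\lambda_i$ by exploiting the identity $A_G = A_G \circ A_G$ together with a Gaussian test vector on the span of $\{\bv_i \circ \bv_j : i,j \in L_T\}$, iterates this to show that the eigenvalues of magnitude $o(n)$ carry square-mass $o(n^2)$, and only then passes to the graph side via a spectral-partitioning argument to conclude closeness to a clique union. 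At no point does it exhibit a single explicit test vector certifying $\lambda_n \leq -n^{1/4-\epsilon}$; your plan does exactly that, which is where it runs into trouble.

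The concrete problem is the extraction step. You want disjoint $U,W$ with $|U|=|W|=s \geq n^{1/4-\epsilon}$, density near $1$ across $U\times W$, and density near $0$ inside each of $U$ and $W$, and to read off $\lambda_n \leq -\Omega(s)$ from $\mathbf{1}_U - \mathbf{1}_W$. But such a block does not exist in general. Take $G = G(n, 1/2)$, which is $\epsilon$-far from a disjoint union of cliques for every small $\epsilon$ and has $\lambda_n \approx -\sqrt{n}$. A standard union bound shows that with high probability \emph{no} pair of disjoint $s$-sets has constant density contrast once $s \gg \log n$: the probability for a fixed pair is $\exp(-\Omega(s^2))$ while there are only $n^{O(s)}$ choices. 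So the combinatorial structure you propose to distill with $s \approx n^{1/4}$ simply is not there, even though the theorem's conclusion is correct for $G(n,1/2)$. The constant-contrast blowup/supersaturation route is therefore blocked.

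The natural fix is to relax to a discrepancy statement: find $U,W$ (possibly of size $\Theta(n)$) with a bipartite edge-excess $e(U,W) - e(U) - e(W) \geq t(|U|+|W|)$ and deduce $\lambda_n \leq -t$. But to get $t \geq n^{1/4-\epsilon}$ this way, you would need surplus (or positive discrepancy) of order $n^{5/4-\epsilon}$ for every $\epsilon$-far graph, which is precisely Conjecture~\ref{conj:discrepancy} and remains \emph{open}; the paper only establishes surplus $\geq n^{1.01}$ in Theorem~\ref{thm:discrepancy}. Indeed the paper explicitly points out that the expander mixing inequality $\surp(G) \leq -\lambda_n n$ only goes one way, and that a surplus lower bound is strictly harder to obtain than an eigenvalue lower bound for irregular graphs. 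So your route, if carried out, would be proving a strictly stronger statement than Theorem~\ref{thm:main}, one which is currently out of reach; the paper's spectral recursion is the device that lets it bypass this and reach the optimal exponent $1/4$.
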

The second and least eigenvalues of a regular graph are closely related to the \emph{positive discrepancy} and \emph{max--cut} respectively. In this paper, we mainly work with max--cut; analogous results for the positive discrepancy of regular graphs can be obtained by passing to the complement. Let $G$ be a graph with $n$ vertices and $m$ edges. The \emph{max--cut} of $G$, which we denote by $\mc(G)$, is the largest number edges in a bipartite subgraph of $G$. A simple probabilistic argument shows that $\mc(G) \geq \frac{m}{2}$, so we instead consider the \emph{surplus} defined as 
$$\surp(G) = \mc(G) - \frac{m}{2}.$$
A classical result of Edwards \cite{Edwards1, Edwards2} shows that $\surp(G) = \Omega(\sqrt{m})$. Since then, substantial efforts have been devoted to improve this bound for special graph families, in particular $H$-free graphs. Notably, Alon, Bollob\'{a}s, Krivelevich and Sudakov \cite{ABKS} conjectured that for any $H$, there exists a constant $\epsilon > 0$ such that any $K_r$-free graph $G$ satisfies
$$\surp(G) \geq c_H m^{3/4 + \epsilon}$$
where $c_H > 0$ is a constant depending on $H$ only. To prove this conjecture, we may assume that $H = K_r$. To our knowledge, the best previous result for $H = K_r$ was due to Glock, Janzer, and Sudakov \cite{GJS23}, who improved on a previous result of Carlson, Kolla, Li, Mani, Sudakov, and Trevisan \cite{CKLMST} and proved that any $K_r$-free graph $G$ satisfies $\surp(G) = \Omega(m^{\frac{1}{2} + \frac{3}{4r - 2}}).$ However, people did not even known whether the $3/4$ can be replaced with any absolute constant greater than $1/2$; Glock--Janzer--Sudakov \cite{GJS23} and Balla--Janzer--Sudakov \cite{BJS24} explicitly wondered if this holds. We refer to the survey \cite{WuHou} and the introduction of \cite{RST2023} for other partial results toward this conjecture.

A well-known result says that if $G$ is a $d$-degenerate graph, then we have
$$\surp(G) \geq \frac{m}{2\chi} \geq \frac{m}{2(d + 1)}$$
where $\chi$ is the chromatic number of $G$. For $K_r$-free graphs, Carlson, Kolla, Li, Mani, Sudakov, and Trevisan \cite{CKLMST} improved this to
$$\surp(G) \geq c_r \cdot \frac{m}{d^{1 - \frac{1}{2r - 4}}}.$$
Thus, the difficult case in the ABKS conjecture is when $d$ is close to $m^{1/2}$, which occurs if and only if the graph is dense (we use ``dense" to mean at least $n^{2-o(1)}$ edges, and ``very dense" to roughly mean $\Omega\left(n^2\right)$ edges). This motivates us to investigate the surplus of dense graphs in general, without the $H$-free assumption.

The expander--mixing lemma states that
\begin{equation}
\label{eq:expander-mixing}
 \surp(G) \leq - \lambda_n n.   
\end{equation}
So a lower bound on surplus implies a lower bound on least eigenvalue, but not vice versa. By adapting the method of Conlon and Zhao \cite{ConlonZhao}, we can show that \eqref{eq:expander-mixing} is tight up to a $\log n$ factor for vertex--transitive graphs. However, while some converse results of the expander mixing lemma hold for general graphs (e.g. \cite{ACHKRS}), we are unsure if \cref{thm:main} directly imply analogous statements for the surplus. Nevertheless, it hints that something can be said about graphs with surplus at most $n^{1 + c}$, where $c > 0$ is a small constant.

R\"{a}ty, Sudakov and Tomon \cite{RST2023} showed that for very dense regular graphs with degree $d \in \left[\left(\frac{1}{2} + \epsilon\right) n, \left(1 -  \epsilon\right) n\right]$, Edward's bound can be improved by a polynomial factor. 
$$\surp(G) \geq \Omega_\epsilon\left(\frac{n^{5/4}}{\log n}\right).$$
As a disjoint union of cliques has surplus $O(n)$, they \cite[Conjecture 8.1]{RST2023} conjectured that there is a similar polynomial improvement for graphs that are far from a disjoint union of cliques.\footnote{Their original formulation is in terms of the positive discrepancy $\disc^+(G)$. For regular graphs $G$, they showed that in the same paper that $\disc^+(G)$ and $\surp(\bar{G})$ are within a constant factor of each other. Thus, the statement we study here is equivalent.}
\begin{conjecture}
    \label{conj:discrepancy}
    For any $\epsilon > 0$, the following holds for some $c > 0$. If $G$ is a regular graph on $n$ vertices that is $\epsilon$-far from a disjoint union of cliques, then 
    $$\surp(G) \geq cn^{5/4}.$$
\end{conjecture}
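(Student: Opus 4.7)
My plan is to combine \cref{thm:main} with a converse expander-mixing argument, routed through the semidefinite relaxation of max--cut.  Since $\surp(G) = -\tfrac{1}{4}\min_{x \in \{\pm 1\}^n} x^{\top} A x$, consider
\[
\surp^{\mathrm{SDP}}(G) = -\frac{1}{4}\min_{Y \succeq 0,\, Y_{ii} = 1} \mathrm{tr}(AY).
\]
Charikar--Wirth rounding for symmetric matrices with zero diagonal gives $\surp(G) \geq \Omega(\surp^{\mathrm{SDP}}(G)/\log n)$, and since the target tolerates an $n^{\epsilon}$ slack, it suffices to prove $\surp^{\mathrm{SDP}}(G) \geq \Omega(n^{5/4 - \epsilon})$.

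The trivial bound $\mathrm{tr}(AY) \geq \lambda_n n$, attained by $Y = n v_n v_n^{\top}$, requires $|v_n(i)| = 1/\sqrt{n}$ uniformly, which \cref{thm:main} alone does not guarantee.  I would iterate \cref{thm:main} to extract multiple near-disjoint bottom eigenvectors.  Concretely, if the bottom eigenvector $v_n$ is essentially supported on a set $S$ of size $k$, then $G - S$ is still $(\epsilon - k/n)$-far from a disjoint union of cliques (deleting $k$ vertices removes at most $kn$ edges, while isolating those vertices turns disjoint-clique structure back into disjoint-clique structure), so \cref{thm:main} applied to $G - S$ produces another eigenvector with eigenvalue $\leq -n^{1/4-\epsilon}$ and support disjoint from $S$.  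Continue this process until the accumulated supports $S_1, \ldots, S_t$ cover a linear fraction of $V(G)$, or some extracted eigenvector is already spread enough to be used directly.  Assembling these into
\[
Y = \sum_i c_i v^{(i)} (v^{(i)})^{\top} + D,
\]
with $c_i$ proportional to $|S_i|$ and $D \succeq 0$ diagonal chosen to enforce $Y_{ii} = 1$, the disjointness of supports gives $\mathrm{tr}\bigl(A v^{(i)} (v^{(i)})^{\top}\bigr) = \mu_i$ (cross-edges contribute zero), so $\mathrm{tr}(AY) \leq \sum_i c_i \mu_i \lesssim -n^{5/4 - \epsilon}$.

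\textbf{Main obstacle.}  The crux is sustaining the iteration while keeping $Y \succeq 0$ and $Y_{ii} = 1$.  Specifically, the diagonal correction $D$ is nonnegative only if each extracted eigenvector satisfies $|v^{(i)}(x)|^2 \leq c_i^{-1}$ on its support, i.e., is sufficiently flat.  Nothing in \cref{thm:main} controls the localization profile of the bottom eigenvector, and a maximally localized eigenvector might also encode a local obstruction whose removal destroys the $\epsilon$-far hypothesis.  A robust refinement of \cref{thm:main} — either producing many independent low eigenvectors in one shot, or quantitatively controlling the $\ell^{\infty}/\ell^{2}$ ratio of the bottom eigenvector in terms of the $\epsilon$-farness parameter — appears to be the real conceptual input needed.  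Without such a refinement, the iteration may stall after only $n^{o(1)}$ rounds, which would recover the weaker $n^{1+o(1)}$-style bound rather than the full $n^{5/4}$.
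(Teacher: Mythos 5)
The statement you are attempting is a conjecture that the paper does \emph{not} prove; the paper only establishes the weaker \cref{thm:discrepancy}, giving $\surp(G) \geq n^{1+c}$ with $c = 1/100$ in place of the conjectured exponent $5/4$. So there is no proof in the paper to match your attempt against, and any proposal for the full $n^{5/4}$ bound would be genuinely new.

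The gap you flag in your own plan is the real one, and it is worth naming precisely. To make the block-diagonal SDP witness $Y = \sum_i c_i v^{(i)}(v^{(i)})^\top + D$ feasible with $Y_{ii}=1$, you need $c_j |v^{(j)}(x)|^2 \leq 1$ for every $x$ in the support $S_j$, so that $c_j$ can be taken of order $|S_j|$ only if $v^{(j)}$ is essentially flat on $S_j$. Nothing in \cref{thm:main} controls this $\ell^\infty/\ell^2$ ratio, so the contribution $c_j\mu_j$ could be $O(\mu_j)$ rather than $\Theta(|S_j|\mu_j)$. There is a second problem the plan glides past: the bottom eigenvector of $G - S_1$ generically spreads over all of $V\setminus S_1$, so the premise ``essentially supported on a set $S_2$ of intermediate size'' is not something you can iterate — you may never obtain the disjoint, moderately sized supports the assembly requires. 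The paper explicitly flags this obstacle in the introduction, remarking that it is unclear whether \cref{thm:main} directly yields a surplus bound (the expander-mixing inequality \eqref{eq:expander-mixing} goes in the opposite direction). For the partial result it does prove, the paper sidesteps the conversion entirely: it re-derives the recursion of \cref{lem:recursive} directly in terms of $\surp^*(G)$, using the energy bound $\cE(G)\leq 4\surp^*(G)$ (\cref{lem:surplus-energy}), the pointwise bound $-\lambda_n \leq (2n\surp^*(G))^{1/3}$ (\cref{lem:least-pointwise-surplus}), and a clipping trick on the Gaussian test vector (\cref{lem:effect-of-truncation}, \cref{lem:recursive-II}). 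The loss from exponent $5/4$ down to $1+1/100$ is traced precisely to the exponent $1/3$ in \cref{lem:least-pointwise-surplus}, and the author points to sharpening that lemma as the natural route toward the conjecture. Your eigenvector-iteration scheme, if the flatness and support-localization issues could be resolved, would be an alternative route; but as you correctly diagnose, that resolution is the missing conceptual input, not a technical detail.
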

R\"{a}ty, Sudakov and Tomon found it interesting to find any $\alpha > 0$ such that $\surp(G) \geq n^{1 + \alpha}$ is satisfied in \cref{conj:discrepancy}. By adapting the proof method of \cref{thm:main} with a number of technical modifications, we show that such an $\alpha$ exists even for irregular graphs.
\begin{theorem}
    \label{thm:discrepancy}
    Let $c = \frac{1}{100}$. For any $\epsilon > 0$, the following hold for all $n \gg_{\epsilon} 1$. If $G$ is a graph on $n$ vertices that is $\epsilon$-far from a disjoint union of cliques, then 
    $$\surp(G) \geq n^{1 + c}.$$
\end{theorem}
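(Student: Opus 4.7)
Plugging \cref{thm:main} directly into the expander mixing lemma \eqref{eq:expander-mixing} is doomed, since the latter only yields an upper bound on $\surp(G)$ in terms of $\lambda_n$. The plan is therefore to reopen the proof of \cref{thm:main} and extract, at each step, not just an eigenvector with a very negative Rayleigh quotient but an actual bipartition of $V(G)$ whose surplus we can bound below. The elementary identity we will lean on is that for any disjoint $P, N \subseteq V(G)$, placing $P$ on one side, $N$ on the other, and assigning the remaining vertices uniformly at random gives a bipartition whose expected surplus is $\tfrac{1}{2}\bigl(e(P,N) - e(P) - e(N)\bigr)$; hence it suffices to produce disjoint $P, N$ with $e(P,N) - e(P) - e(N) \geq 2 n^{1+c}$.

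Fix a bottom eigenvector $v$ of $A$ provided by \cref{thm:main}, normalized so that $\|v\|_2^2 = n$ and $\lambda_n \leq -n^{1/4-\epsilon}$. Partition by threshold, $P_\tau = \{i : v_i \geq \tau\}$ and $N_\tau = \{i : v_i \leq -\tau\}$, for a parameter $\tau > 0$ to be optimized. The identity $\sum_{ij \in E} v_i v_j = \tfrac{\lambda_n}{2}\|v\|_2^2$ decomposes into contributions from edges inside $P_\tau$, inside $N_\tau$, across $P_\tau \times N_\tau$, and edges incident to the ``small'' set $\{i : |v_i| < \tau\}$. Bounding each contribution in terms of $\|v\|_\infty$ and $\tau$, then optimizing $\tau$, should yield the needed lower bound on $e(P_\tau, N_\tau) - e(P_\tau) - e(N_\tau)$ whenever $v$ is sufficiently delocalized (say $\|v\|_\infty \lesssim \sqrt{\|v\|_2^2/n}$); indeed, in that regime the construction already produces a surplus of order $|\lambda_n| \cdot n \geq n^{5/4-\epsilon}$, far exceeding what \cref{thm:discrepancy} claims.

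The main obstacle is eigenvector localization. Since $|\lambda_n| \leq \|A[\mathrm{supp}(v),\mathrm{supp}(v)]\|_{\mathrm{op}} \leq |\mathrm{supp}(v)|$, the support of $v$ may shrink to as few as $|\lambda_n| \approx n^{1/4-\epsilon}$ vertices, in which case the total number of edges inside $\mathrm{supp}(v)$ is only $O(n^{1/2-2\epsilon})$ and the single-threshold cut cannot on its own reach $n^{1+c}$. My plan for this case is iterative extraction: having processed $v$'s support, restrict to the induced subgraph on the complementary vertices and rerun the argument. Each iteration removes at most $n^{1-\Omega(1)}$ vertices, so the ``$\epsilon$-far from disjoint unions of cliques'' hypothesis degrades by only $o(1)$ per pass, and polynomially many iterations can be executed. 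With appropriate bookkeeping of edges crossing between different iteration supports, the per-iteration contributions aggregate into a single cut of total surplus at least $n^{1+c}$.

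The technically most delicate step, and one reason the exponent $c = 1/100$ is so much smaller than the natural $1/4$ in \cref{thm:main}, is the handling of irregularity: the Rayleigh-quotient-to-cut-surplus conversion acquires error terms that depend on the degree profile of $v$'s support. My plan is to preprocess $G$ by restricting to a large nearly-regular subgraph that inherits the far-from-cliques hypothesis up to a controlled loss in $\epsilon$, or, failing that, to reformulate the spectral step in terms of the normalized Laplacian so the irregularity corrections can be absorbed into the generous slack in $c$.
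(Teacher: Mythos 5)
Your plan hinges on converting a very negative $\lambda_n$ (from \cref{thm:main}) into a cut with large surplus, which is exactly the converse-expander-mixing direction. The paper flags this explicitly: the inequality $\surp(G) \leq -\lambda_n n$ only goes one way, and the authors state they do not know whether \cref{thm:main} directly implies a surplus bound. Indeed the converse can fail by a polynomial factor for general graphs: a graph consisting of one copy of $K_{\sqrt n, \sqrt n}$ together with $n - 2\sqrt n$ isolated vertices has $-\lambda_n n \approx n^{3/2}$ but $\surp(G) = \Theta(n)$. So there is genuine content missing between ``the spectrum is bad'' and ``there is a good cut.'' Your eigenvector thresholding step only yields cut quality $\Omega(|\lambda_n| n)$ under a delocalization hypothesis which, as you acknowledge, cannot be assumed. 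The iterative extraction sketch that is supposed to rescue the localized case is a plan, not a proof: you need to (a) show that a localized bottom eigenvector supported on $k$ vertices actually furnishes a per-iteration cut gain that is polynomially large in $k$ (this does \emph{not} follow from $\lambda_n \leq -k^{1-o(1)}$ alone --- the induced subgraph need not be near-complete-bipartite), (b) re-establish the eigenvalue bound on each residual subgraph and verify the ``far from cliques'' hypothesis survives the removal schedule, and (c) combine the per-iteration pairs $(P_i, N_i)$ into a single bipartition while neutralizing the cross-iteration edge terms, which requires an explicit randomization/symmetrization argument you do not give. The final paragraph about a normalized-Laplacian reformulation is similarly aspirational rather than a step of a proof.

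The paper's proof does not route through \cref{thm:main} at all, and this is the essential difference. It assumes, toward a contradiction, that $\surp^*(G) \leq \tfrac12 n^{1+c}$ and reruns the recursive machinery \emph{directly under this surplus hypothesis}. The key observation is that the quantity to be bounded --- surplus --- already controls the spectral inputs the recursion needs: \cref{lem:surplus-energy} gives $\cE(G) \leq 4\surp^*(G)$, and \cref{lem:least-pointwise-surplus} gives $-\lambda_n \leq (2n\surp^*(G))^{1/3}$, both via explicit test matrices in the SDP relaxation $\surp^*$. The second bound is weaker than the eigenvalue hypothesis used in \cref{lem:recursive}, which forces a new technical device: the Gaussian test vector must be clipped (\cref{lem:effect-of-truncation}), with errors controlled by a flatness property of eigenvectors corresponding to large eigenvalues, yielding the modified recursion \cref{lem:recursive-II}. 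One then solves the recursion with \cref{lem:solving-recursion-general} and finishes by \cref{lem:spectrum-to-clique} exactly as in \cref{thm:main}. Working on this ``contrapositive'' side of surplus avoids the converse-expander-mixing problem entirely --- the gap your proposal would have to fill.
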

We define the \emph{edge density} of a graph $G$ as
$$p(G) = \frac{2m}{n^2}$$
where $m$ is the number of edges and $n$ is the number of vertices. \cref{thm:main} and \cref{thm:discrepancy} are relevant when the edge density of $G$ is $\Omega(1)$. However, for the ABKS conjecture, the relevant graphs have edge density $n^{-r(n)}$, where $r(n)$ goes to zero slowly with $n$. Our proofs of \cref{thm:main} and \cref{thm:discrepancy} only work when $n$ is at least exponential in $\epsilon^{-1}$, making them trivial if $G$ has at most $\frac{n^2}{\log n}$ edges. This motivates the following questions.
\begin{problem}
\label{prob:polynomial}
1)  Does \cref{thm:discrepancy} hold for $\epsilon = n^{-c}$, where $c > 0$ is some small constant?

2) More generally, does there exist some absolute constant $c > 0$ such that the following holds for any $\alpha > 1$ and $n > 0$? If $G$ is a graph on $n$ vertices with at least $n^{2 - c}$ edges and is $\alpha n^{-1}$-far from a disjoint union of cliques, then we must have
$$\surp(G) \geq c \cdot \alpha^{c} n.$$
\end{problem}
In this paper, we do not answer this question in full generality. Our next result makes some progress towards understanding the surplus of graphs with edge density in the range $[n^{-c}, 1]$. We show that if $G$ is a graph with at least $n^{1.999}$ edges and surplus at most $n^{1.001}$, then $G$ must contain a large, very dense subgraph. After a preliminary version of this manuscript was circulated, Jin--Milojevic--Tomon \cite{JMT2025} completely resolved part 1) of this problem by building on this theorem. We thank Tomon for sharing a draft of their paper.
\begin{theorem}
    \label{thm:polynomial}
    Let $\rho = \frac{1}{1000}$. For any $\epsilon > 0$, the following holds for all $n \gg_\epsilon 1$. Let $G$ be a graph on $n$ vertices with at least $n^{2 - \rho}$ edges. If $\surp(G) \leq n^{1 + \rho}$, then $G$ must contain an induced subgraph $H$ on at least $n^{1 - 4\rho}$ vertices with edge density at least $(1 - \epsilon)$.     
\end{theorem}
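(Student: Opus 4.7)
The plan is to reduce to a minimum-degree subgraph, use \cref{thm:discrepancy} to recognize it as close to a disjoint union of cliques, and then extract one large near-clique from the decomposition. The classical greedy degeneracy argument --- iteratively removing a vertex of minimum degree --- produces an induced subgraph $G'$ on $n' \geq n^{1-\rho}+1$ vertices with minimum degree at least $m/n \geq n^{1-\rho}$. Surplus is monotone under taking induced subgraphs: given an optimal bipartition of $V(G')$, extending by a uniformly random partition of $V(G) \setminus V(G')$ yields a bipartition of $V(G)$ whose expected cut is $\mathrm{maxcut}(G') + (m - e(G'))/2$, so $\surp(G) \geq \surp(G')$. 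In particular $\surp(G') \leq n^{1+\rho}$, and $e(G') \geq \tfrac{1}{2} n' \cdot n^{1-\rho} \geq \tfrac{1}{2} n^{2-2\rho}$.

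A direct computation gives $(n')^{1+c} \geq n^{(1-\rho)(1+c)} > n^{1+\rho}$ for $c = 1/100$ and $\rho = 1/1000$, so $\surp(G') < (n')^{1+c}$. The contrapositive of \cref{thm:discrepancy} then yields: for any fixed $\epsilon_1 > 0$ and $n$ sufficiently large, $G'$ is $\epsilon_1$-close to a disjoint union of cliques $C_1 \sqcup \cdots \sqcup C_k$ with $s_i := |C_i|$. From $\sum \binom{s_i}{2} \geq e(G') - \epsilon_1 (n')^2$ and the trivial inequality $\sum s_i^2 \leq n \max_i s_i$, one extracts $\max_i s_i \geq 2 \bigl(\sum \binom{s_i}{2}\bigr)/n$. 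Let $C = C_1$ be the largest cluster, with $s := s_1$, and let $D$ be the number of non-edges of $G$ inside $C$. A uniformly random subset $U \subseteq C$ of size exactly $n^{1-4\rho}$ has expected non-edge count $D \binom{n^{1-4\rho}}{2}/\binom{s}{2}$, which is at most $\epsilon \binom{n^{1-4\rho}}{2}$ whenever $D/s^2$ is small enough; some realization then yields the required induced subgraph of density $\geq 1 - \epsilon$, provided $s \geq n^{1-4\rho}$.

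The main obstacle is that the above requires the approximation noise $\epsilon_1 (n')^2$ to be polynomially smaller than the main term $n^{2-2\rho}$, i.e.\ $\epsilon_1 = O(n^{-2\rho})$; however, \cref{thm:discrepancy} only affords $\epsilon_1$ as a fixed constant, and the critical scale $n^2$ dominates the actual edge count $n^{2-\rho}$. The resolution, consistent with the paper's remark that \cref{thm:discrepancy} is proved by adapting the spectral method for \cref{thm:main}, is to open up that proof and extract a quantitative defect bound of order $n^{2+2\rho-c}$ --- rather than merely $\epsilon_1 n^2$ --- whenever $\surp(G') \leq n^{1+\rho}$. Since $c - 4\rho = 6/1000 > 0$, such a defect is $o(n^{2-2\rho})$, which would make both the cluster-size lower bound $\max_i s_i \geq n^{1-4\rho}$ and the random-subset density calculation go through as outlined.
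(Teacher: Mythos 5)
You correctly identify the central obstacle, but the fix you propose does not work, and the paper's actual proof uses a fundamentally different mechanism to get around it.

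After the degeneracy reduction, the subgraph $G'$ has minimum degree $\geq n^{1-\rho}$, but its vertex count $n'$ may be as large as $n$, so its edge density $p(G') = 2e(G')/(n')^2$ can be as small as $n^{-\rho} = o(1)$. In that regime the conclusion that $G'$ is $\epsilon_1$-close to a disjoint union of cliques is vacuous: $G'$ is already $\epsilon_1$-close to the \emph{empty} graph (itself a disjoint union of cliques) for any fixed $\epsilon_1$ once $n$ is large. So the clique extraction step collapses — as you note, you would need $\epsilon_1 = O(n^{-2\rho})$, a vanishing closeness parameter.

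The remedy you suggest — re-opening the proof of \cref{thm:discrepancy} to extract a defect bound of order $n^{2-\Omega(1)}$ — cannot be carried out with the paper's technique. \cref{thm:discrepancy} is proved via \cref{lem:spectrum-to-clique}, whose key ingredients are a spectral partition into $t = \lceil 2\gamma^{-10}+1\rceil^{\gamma^{-2}}$ parts (exponential in $\gamma^{-2}$) and the induced graph removal lemma (\cref{lem:cherries}). Neither yields polynomial-type dependence; indeed the paper explicitly remarks in \cref{sec:ABKS?} that ``the spectral partitioning argument is unlikely to give polynomial-type bound,'' and getting such a bound is precisely the open \cref{prob:polynomial}, resolved only later by Jin--Milojevic--Tomon using \cref{thm:polynomial} itself. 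So your route is essentially circular.

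What the paper does instead is a two-stage argument. First, \cref{lem:density-increment} provides a fast density increment that avoids the removal lemma entirely: it bounds the maximum degree, runs the recursion \cref{lem:recursive-II} with carefully tuned parameters, and then uses the triangle count $6t(G)=\sum_i\lambda_i^3$ together with the concentrated spectral mass above threshold $H$ to locate a vertex $v_0$ whose neighborhood has density $\Omega(\sqrt{p})$. Iterating, with the key invariant that $p_i^3 n_i$ is non-decreasing, one reaches an induced subgraph $G_k$ with $p_k \geq 10^{-100}$ and $n_k \geq n^{1-3\rho}$. Only at that point, where the density is $\Omega(1)$, is \cref{thm:discrepancy} invoked — and there the constant $\epsilon$-closeness is genuinely informative and produces a near-clique of size $\Omega(p_k n_k) \geq n^{1-4\rho}$. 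Your proposal is missing this density-boosting stage, and the step you flag as needing a new idea is precisely where the proof lives.
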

An immediate application is an improvement on the surplus of $K_r$-free graphs for large $r$.
\begin{corollary}
    \label{cor:ABKS?}
    The following holds for every $r$ and $m \gg_r 1$. Suppose $G$ is a $K_r$-free graph with $m$ edges. Let $\rho$ be the absolute constant in \cref{thm:polynomial}. Then we have
    $$\surp(G) \geq m^{\frac{1}{2} + \frac{\rho}{10}} = m^{0.5001}.$$
    In other words, there exists a constant $c_r > 0$ such that for every $K_r$-free graph $G$ with $m$ edges, we have
    $$\surp(G) \geq c_r m^{0.5001}.$$
\end{corollary}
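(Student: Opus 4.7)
The plan is to combine \cref{thm:polynomial} with the degeneracy bound of Carlson, Kolla, Li, Mani, Sudakov and Trevisan (CKLMST), splitting into two cases according to the degeneracy $d$ of $G$. Fix $\delta := \rho/10 = 10^{-4}$.

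Case $d \leq m^{1/2-\delta}$: I would invoke the CKLMST bound $\surp(G) \geq c_r m/d^{1-1/(2r-4)}$ directly. Substituting the bound on $d$ gives an exponent of the form $1/2 + \delta + [1 - 2\delta]/(4r-8)$, which strictly exceeds $1/2 + \delta = 0.5001$ for every $r \geq 3$. Hence $\surp(G) \geq c_r m^{0.5001}$ in this case.

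Case $d > m^{1/2-\delta}$: Let $H$ be the $d$-core of $G$, i.e., the maximal subgraph of minimum degree at least $d$, and set $n_0 := |V(H)|$. The min-degree bound gives $e(H) \geq n_0 d/2$, while $e(H) \leq m$ gives $n_0 \leq 2m/d$; combining, $m^{1/2-\delta} < n_0 \leq 2 m^{1/2+\delta}$. The key technical point is the verification $e(H) \geq n_0^{2-\rho}$: this reduces to $n_0 \leq (d/2)^{1/(1-\rho)}$, which follows from the choice $\delta < \rho/(2(2-\rho))$ together with $m \gg 1$. Now \cref{thm:polynomial} applies to $H$ with $\epsilon := 1/(2(r-1))$.

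The conclusion of \cref{thm:polynomial} bifurcates. If $\surp(H) \geq n_0^{1+\rho}$, then by greedy extension of any max cut of $H$ to $G$ we obtain $\surp(G) \geq \surp(H) \geq n_0^{1+\rho} > m^{(1/2-\delta)(1+\rho)} > m^{0.5001}$. Otherwise $H$ contains an induced subgraph on at least $n_0^{1-4\rho}$ vertices with edge density at least $1 - 1/(2(r-1))$, which is strictly above the Tur\'{a}n threshold $1 - 1/(r-1)$, so by Tur\'{a}n's theorem it contains a copy of $K_r$ (for $m \gg_r 1$), contradicting the $K_r$-freeness of $G$. The main obstacle, though mild, is the exponent bookkeeping in the second case, in particular ensuring that $\delta$ is taken small enough (less than $\rho/4$) so that the weak bounds $n_0 \in (m^{1/2-\delta}, 2 m^{1/2+\delta}]$ and $e(H) \geq n_0 d/2$ are already strong enough to certify $e(H) \geq n_0^{2-\rho}$.
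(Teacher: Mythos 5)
Your proposal is correct and follows essentially the same two‑case strategy as the paper: split on the degeneracy $d$ of $G$, handle the small‑degeneracy case by a degeneracy-type lower bound on surplus, and in the large‑degeneracy case pass to a dense induced subgraph of bounded vertex count and feed it into \cref{thm:polynomial}, closing with Tur\'{a}n's theorem.

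The one substantive difference is your treatment of the low‑degeneracy case. The paper invokes only the folklore bound $\surp(G) \geq \frac{m}{2(d+1)}$ and tunes the degeneracy threshold to $d = \tfrac{1}{2}m^{1/2 - \rho/10} - 1$ so that this weak bound produces exactly $m^{1/2+\rho/10}$ with no leading constant to absorb. You instead invoke the stronger CKLMST bound $\surp(G) \geq c_r\, m / d^{1 - 1/(2r-4)}$, which gives a strict exponent surplus $(1/2-\delta)/(2r-4)$ over the target, so the constant $c_r$ is absorbed once $m \gg_r 1$. Both work; yours imports a harder external result, while the paper's version is more self‑contained but requires the slightly fiddly choice of threshold. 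Your bookkeeping in the high‑degeneracy case ($n_0 \leq (d/2)^{1/(1-\rho)}$ from $\delta < \rho/(2(2-\rho))$, the bifurcation on whether $\surp(H) \geq n_0^{1+\rho}$, the monotonicity of $\surp$ under induced subgraphs, and the Tur\'{a}n contradiction with $\epsilon = 1/(2(r-1))$) is all sound and, if anything, more careful than the corresponding chain of inequalities printed in the paper.
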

Thus, we improve the exponent $\frac{1}{2}$ in Edwards' bound by an absolute constant for $H$-free graphs, answering the questions of Glock--Janzer--Sudakov \cite{GJS23} and Balla--Janzer--Sudakov \cite{BJS24}. We make no serious attempt to optimize the value of $\rho$ here, as getting close to $m^{\frac{3}{4}}$ certainly requires completely new ideas. 

As a final remark, Balla, Hambardzumyan and Tomon \cite{BHT25} recently showed a similar ``inverse theorem" for maxcut: if a graph $G$ satisfies $\surp(G) \leq \alpha m^{1/2}$, then $G$ contains a clique of size at least $2^{-O(\alpha^9)} \sqrt{m}$. Their result is complementary to ours, as \cref{thm:polynomial} cannot produce a clique of size $\Omega(m^{1/2})$, and their result is non--trivial only when $\alpha = O(\log^{1/9} m)$. A positive answer to \cref{prob:polynomial} would improve the clique size to $\alpha^{-C} \sqrt{m}$ for some absolute constant $C > 0$, giving us more motivation to study this problem.

The rest of the paper is organized as follows. In \cref{sec:overview} we introduce our notations and present three lemmas, Lemma~\ref{lem:recursive}, \ref{lem:top-concentration} and \ref{lem:spectrum-to-clique}, that constitute the proof of \cref{thm:main}. In \cref{sec:lem1}, \ref{sec:lem2}, \ref{sec:lem3} we prove each of the three lemmas, thus completing the proof of \cref{thm:main}. In \cref{sec:discrepancy-1} and \ref{sec:discrepancy-2} we prove \cref{thm:discrepancy} using a similar strategy, but with a key technical modification (\cref{lem:recursive-II}) to \cref{lem:recursive} described in \cref{sec:discrepancy-1}. Finally, by replacing \cref{lem:spectrum-to-clique} with a new argument in  \cref{lem:density-increment}, we prove \cref{thm:polynomial} in \cref{sec:ABKS?}.
\section*{Acknowledgement}
I am grateful to Anqi Li and Nitya Mani for the illuminating talks and discussions that introduced me to this subject. I thank Igor Balla and Istv\'{a}n Tomon for enlightening discussions about \cref{conj:second-eigenvalue} that motivate much of this work. My appreciation goes to my advisor, Jacob Fox, for guiding this project with insightful feedback and mentorship. I also thank Quanyu Tang and Clive Elphick for proposing a number of conjectures that inspired several key ideas developed here. Finally, I thank Jack Koolen and Qianqian Yang for helpful discussions regarding the Hoffman program. This material is partially based upon work supported by the National Science Foundation under Grant No. DMS-1928930, while the author was in residence at the Simons Laufer Mathematical Sciences Institute in Berkeley, California, during the Spring 2025 semester. This work was also partially supported by NSF Award DMS-2154129. {\color{white} Finally, I thank Koishi Komeiji for inspiring a key argument in this paper on May 14th, 2025.}


\section{Notations and Proof overview}
\label{sec:overview}
We begin by recalling some well-known linear algebraic notations and facts, which we will use throughout the paper. Then we give an overview of the proof strategy by formally stating the main intermediate results leading up to \cref{thm:main}.

All graphs are simple and undirected. Let $G$ be a graph with vertex set $V$ and edge set $E$, and let $n = |V|$ denote the number of vertices. For the sake of convenience, we always identify $V$ with $\{1,2,\cdots, n\}$. The \emph{adjacency matrix} of $G$ is the symmetric $n \times n$ matrix $A_G$, with rows and columns indexed by $V$, whose entry at $(i, j)$ is $1$ if $ij \in E$ and $0$ otherwise. As is standard in spectral graph theory, we order the eigenvalues of $A_G$ from largest and smallest, and denote them by
$$\lambda_1 \geq \cdots\geq \lambda_n.$$
The \emph{spectrum} of $G$ refers to these eigenvalues. Recall the classical identities
$$\sum_{i = 1}^n \lambda_i = 0,$$
$$\sum_{i = 1}^n \lambda_i^2 = 2 |E| \leq n^2.$$
Let $\bv_i \in \RR^n$ be a unit eigenvector of $A_G$ corresponding to $\lambda_i$, and write $\bv_{ij}$ as the entry of $\bv_i$ corresponding to vertex $j$. Recall the \emph{spectral decomposition}
$$A_G = \sum_{i = 1}^n \lambda_i  \bv_i \bv_i^\top.$$
For vectors $\bw$ and $\bw'$ in $\RR^n$, their \emph{entrywise product} $\bw \circ \bw'$ is the vector in $\RR^n$ with entries $(\bw \circ \bw')_i = \bw_i \bw'_i$. We analogously define the entrywise product of matrices. As $A_G$ is a $\{0, 1\}$-matrix, we note that
$$A_G = A_G \circ A_G.$$
The \emph{energy} of $G$ is defined as
$$\cE(G) = \sum_{i = 1}^n \abs{\lambda_i}.$$
Motivated by applications in molecular chemistry, the energy has attracted considerable attention from spectral graph theorists. We refer the reader to \cite{energy} for some classical discussions on the energy of a graph. We will not use many deep properties of energy in this paper; the relation we often use is simply
$$\cE(G) = 2\sum_{i : \lambda_i \geq 0} \lambda_i = 2\sum_{i : \lambda_i < 0} (-\lambda_i) \leq 2n(-\lambda_n).$$
For a threshold $T$, we define $L_T$ to be the set of $i$ such that $\lambda_i \geq T$, and set
$$S_T = \sum_{i \in L_T} \lambda_i.$$
For example, we have
$$S_0 = \sum_{i \in L_0} \lambda_i = \frac{1}{2} \cE(G).$$
Our main lemma is a recursive inequality bounding the positive eigenvalues of $G$. This observation will play crucial roles in all of our results.
\begin{lemma}
    \label{lem:recursive}
    For every graph $G$ on $n$ vertices and threshold $T \geq 4(-\lambda_n) \sqrt{n}$, we have
    $$S_T^2 \leq 2n S_{\frac{T^2}{4n}}.$$
\end{lemma}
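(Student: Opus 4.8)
The plan is to work with the ``top part'' $B:=\sum_{i\in L_T}\lambda_i\bv_i\bv_i^\top$ of the spectral decomposition of $A:=A_G$ and to extract the recursion from two-sided bounds on $\operatorname{tr}(B\circ B)=\sum_k(B\circ B)_{kk}=\sum_k B_{kk}^2$. Note that $B\succeq 0$, with $\operatorname{tr}B=S_T$ and $\operatorname{rank}B=|L_T|$; if $L_T$ is empty the statement is trivial, so assume $S_T>0$ and write $T':=T^2/(4n)$ (then $T'<T$). One direction is immediate: since $\sum_k B_{kk}=\operatorname{tr}B=S_T$, the Cauchy--Schwarz inequality gives $\operatorname{tr}(B\circ B)=\sum_k B_{kk}^2\ge S_T^2/n$, which is the source of the factor $n$ in the conclusion.

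The substance of the proof is an upper bound on $\operatorname{tr}(B\circ B)$, and this is where $A\circ A=A$ enters. Write $A=A_+-A_-$ with $A_\pm\succeq 0$ supported on the nonnegative, resp.\ negative, eigenvalues. Then $0\preceq B\preceq A_+$, and since the Schur square is monotone on the positive semidefinite cone (if $0\preceq X\preceq Y$ then $X\circ X\preceq Y\circ Y$), we get $B\circ B\preceq A_+\circ A_+$. Expanding $A=A\circ A=(A_+-A_-)\circ(A_+-A_-)$ yields $A_+\circ A_+=A+E$ with $E:=2A_+\circ A_--A_-\circ A_-$; applying $\|X\circ Z\|_{\mathrm{op}}\le(\max_i X_{ii})\|Z\|_{\mathrm{op}}$ for $X\succeq 0$ --- with $X=A_+$, so that what appears is $\max_i(A_+)_{ii}=\max_i(A_-)_{ii}\le-\lambda_n$ rather than the possibly large $\|A_+\|_{\mathrm{op}}$ --- together with $\|A_-\|_{\mathrm{op}}=-\lambda_n$ gives $\|E\|_{\mathrm{op}}\le 3\lambda_n^2$. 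The hypothesis $T\ge 4(-\lambda_n)\sqrt n$ is exactly what makes $3\lambda_n^2\le\tfrac34 T'$. Thus $B\circ B\preceq A+\tfrac34 T'I$, and Weyl's inequality gives $\lambda_i(B\circ B)\le\lambda_i(A)+\tfrac34 T'$ for every $i$.

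Now I use that $B$ --- hence $B\circ B=\sum_{i,j\in L_T}\lambda_i\lambda_j(\bv_i\circ\bv_j)(\bv_i\circ\bv_j)^\top$ --- has rank at most $r:=\binom{|L_T|+1}{2}$. Since $B\circ B\succeq 0$ has at most $r$ nonzero eigenvalues, $\operatorname{tr}(B\circ B)$ is bounded by the sum of $\max\{0,\lambda_i(A)+\tfrac34 T'\}$ over the $r$ largest indices $i$; splitting the spectrum of $A$ at $T'$ (the part at or above $T'$ sums to at most $S_{T'}$, the remaining $\le r$ terms each contribute less than $T'$) gives $\operatorname{tr}(B\circ B)\le S_{T'}+\tfrac74 rT'$. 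Finally, each eigenvalue in $L_T$ is $\ge T$, so $|L_T|\le S_T/T$ and $T\le S_T$; with $T'=T^2/(4n)$ this forces $rT'\le\tfrac12(|L_T|^2+|L_T|)T'\le\tfrac{S_T^2}{8n}+\tfrac{S_T T}{8n}\le\tfrac{S_T^2}{4n}$. Combining with the lower bound, $\tfrac{S_T^2}{n}\le S_{T'}+\tfrac{7S_T^2}{16n}$, i.e.\ $S_T^2\le\tfrac{16}{9}nS_{T'}<2nS_{T'}$; the slack is consistent with the evidently non-optimal constants in the statement.

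The delicate point is the upper bound on $\operatorname{tr}(B\circ B)$. The obvious estimates are useless: for instance $\operatorname{tr}(B\circ B)\le\operatorname{tr}(A_+\circ A_+)=\sum_k(A_-)_{kk}^2\le n\lambda_n^2$ merely reproves the a priori bound $S_T\le n(-\lambda_n)$ and carries no recursive information. The gain comes from two facts working together: (i) $B$, hence $B\circ B$, has rank only $O(|L_T|^2)$ because each of the $|L_T|$ eigenvalues above $T$ contributes at least $T$ to $S_T$; and (ii) via $A\circ A=A$ and PSD-monotonicity of the Schur square, every eigenvalue of $B\circ B$ lies below $\lambda_i(A)+O(\lambda_n^2)$, where $O(\lambda_n^2)\le\tfrac34 T'$ by hypothesis. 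So $\operatorname{tr}(B\circ B)$ is controlled by only the top $O(|L_T|^2)$ eigenvalues of a small downward shift of $A$, essentially all of which is captured by $S_{T'}$, with a leftover error $\asymp rT'$ made negligible against $S_T^2/n$ precisely by the quadratic scaling $T'\asymp T^2/n$. I expect this bookkeeping --- matching the thresholds and the error term rather than any single inequality --- to be the part that needs the most care.
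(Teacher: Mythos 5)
Your proof is correct, and it takes a genuinely different route from the paper's. The paper proves \cref{lem:recursive} probabilistically: it tests the identity $\sum_i\lambda_i\langle\bv_i,q\rangle^2=\sum_{i,j}\lambda_i\lambda_j\langle\bv_i\circ\bv_j,q\rangle^2$ (coming from $A_G=A_G\circ A_G$) against a standard Gaussian $q\sim N(0,I_W)$ on the subspace $W=\operatorname{span}\{\bv_i\circ\bv_j:i,j\in L_T\}$, takes expectations, and bounds the negative--spectrum cross term using the zero diagonal of $A_G$. You instead work deterministically with $\tr(B\circ B)$ for $B=\sum_{i\in L_T}\lambda_i\bv_i\bv_i^\top$: Cauchy--Schwarz on the diagonal gives the lower bound $\tr(B\circ B)\geq S_T^2/n$, while the upper bound comes from $B\circ B\preceq A_+\circ A_+=A_G+E$ together with (i) the Schur--square PSD monotonicity $0\preceq X\preceq Y\Rightarrow X\circ X\preceq Y\circ Y$, (ii) the operator--norm bound $\norm{X\circ Z}_{\mathrm{op}}\leq(\max_i X_{ii})\norm{Z}_{\mathrm{op}}$ for $X\succeq 0$ applied to $E=2A_+\circ A_--A_-\circ A_-$, and (iii) Weyl's inequality plus the rank bound $\operatorname{rank}(B\circ B)\leq\binom{|L_T|+1}{2}$. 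I verified each of these steps; in particular the hypothesis $T\geq4(-\lambda_n)\sqrt n$ enters exactly as $3\lambda_n^2\leq\frac34T'$, and the rank accounting $rT'\leq S_T^2/(4n)$ uses both $|L_T|\leq S_T/T$ and $T\leq S_T$ (legitimate once $L_T\neq\emptyset$, the only nontrivial case). Neither Schur--square monotonicity nor the $\max_i X_{ii}$ norm bound appears in the paper, so this is a real alternative. That said, both proofs ultimately lean on the same three structural facts: $A_G=A_G\circ A_G$, the low rank of the top spectral block, and the observation that the zero diagonal forces $\max_k(A_+)_{kk}=\max_k(A_-)_{kk}\leq-\lambda_n$ (which in the paper appears as $\sum_{i\in L_0}\lambda_i\bv_{ik}^2\leq-\lambda_n$). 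One advantage of the paper's Gaussian formulation is that it survives the clipping modification in \cref{lem:effect-of-truncation}, which is essential for \cref{lem:recursive-II} where only a surplus bound rather than a pointwise $\lambda_n$ bound is available; it is not obvious how to perform an analogous surgery on your deterministic trace argument, so the two approaches are not interchangeable throughout the paper even though they both prove this lemma.
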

We use an example to illustrate how this lemma can give non--trivial results. Assume $G$ satisfies $-\lambda_n \leq n^{1/4 - \epsilon}$. We can apply \cref{lem:recursive} with $T = n^{3/4}$, giving
$$S_{n^{3/4}} \leq \sqrt{2n \cE(G)} \leq 2n \sqrt{-\lambda_n} \leq 2n^{9/8 - \epsilon / 2}.$$
In contrast, the trivial $L^2$-bound 
$$S_T \leq \frac{\sum_{i = 1}^n \lambda_i^2}{T} \leq \frac{n^2}{T}$$
only gives $S_T \leq n^{5/4}$. Thus, \cref{lem:recursive} is power--saving, giving what we need to finish the proof.

As a remark, our proof of this lemma crucially depends on the fact that the adjacency matrix $A_G$ has $\{0, 1\}$-entries. This assumption is also crucially used in Balla--R\"{a}ty--Sudakov--Tomon's simple proof \cite{BRST} of their Alon--Boppana--type bound when $d \leq n / 2$.

Iterating \cref{lem:recursive} with a geometrically increasing threshold sequence $T_1, T_2, \cdots$, we arrive at
\begin{lemma}
\label{lem:top-concentration}
For any $\epsilon, \delta \in (0, 0.01)$, the following holds for all $n \gg_{\epsilon, \delta} 1$. For any graph $G$ on $n$ vertices with $\lambda_n \geq -n^{1/4 - \epsilon}$ we have
$$\sum_{i: \lambda_i \leq (\epsilon\delta)^{1 / \epsilon} n} \lambda_i^2 \leq \delta n^2.$$
\end{lemma}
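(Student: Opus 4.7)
The plan is to iterate Lemma~\ref{lem:recursive} with a sequence of geometrically increasing thresholds and to partition the positive eigenvalues below $T^* = (\epsilon\delta)^{1/\epsilon}n$ into ranges controlled by the resulting bounds. First, the contribution from non-positive eigenvalues is harmless: since $\lambda_n \geq -n^{1/4-\epsilon}$, one has $\sum_{\lambda_i \leq 0}\lambda_i^2 \leq n\lambda_n^2 \leq n^{3/2-2\epsilon}$, which is at most $\delta n^2/4$ once $n \gg_{\epsilon,\delta} 1$.

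For the positive eigenvalues I would iterate Lemma~\ref{lem:recursive} starting from the trivial estimate $S_0 \leq n(-\lambda_n) \leq n^{5/4-\epsilon}$. Applying the lemma $k$ times and bounding the innermost $S$ trivially yields
\[ S_T \leq 2\, n^{\alpha_k} \quad\text{for } T \geq C\cdot n^{\beta_k}, \]
where
\[ \alpha_k = 1 + (1/4-\epsilon)\cdot 2^{-k}, \qquad \beta_k = 1 - (1/4+\epsilon)\cdot 2^{-(k-1)}, \]
and $C$ is an absolute constant. The lower bound on $T$ comes from requiring every intermediate threshold in the iterated lemma to remain at least $4(-\lambda_n)\sqrt n \leq 4n^{3/4-\epsilon}$.

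Let $K$ be the largest integer with $Cn^{\beta_{K-1}} \leq T^*$, and set $\tau_k = Cn^{\beta_k}$. I would partition the sum $\sum_{0<\lambda_i \leq T^*}\lambda_i^2$ across the initial range $(0,\tau_1]$, the middle ranges $(\tau_{k-1},\tau_k]$ for $k=2,\dots,K-1$, and the final range $(\tau_{K-1},T^*]$. The first range contributes at most $\tau_1 \cdot S_0 = O(n^{2-2\epsilon})$, negligible against $\delta n^2$ for $n$ large. Each middle range satisfies
\[ \sum_{\tau_{k-1}<\lambda_i \leq \tau_k}\lambda_i^2 \leq \tau_k \cdot S_{\tau_{k-1}} \leq O\bigl(n^{\alpha_{k-1}+\beta_k}\bigr) = O\bigl(n^{2 - \epsilon\cdot 2^{-(k-2)}}\bigr). \]
For the final range, writing $\beta^* = \log T^*/\log n = 1 - \log((\epsilon\delta)^{-1})/(\epsilon\log n)$, the choice of $K$ forces $\alpha_{K-1}-1 \leq \tfrac{1-4\epsilon}{1+4\epsilon}(1-\beta^*)$, which after a short calculation gives $T^* \cdot S_{\tau_{K-1}} \leq O\bigl(n^2(\epsilon\delta)^{8/(1+4\epsilon)}\bigr)$.

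The main obstacle is controlling the middle sum, which at first glance involves $K = \Theta(\log\log n)$ terms whose naive sum would be unbounded in $n$. The key observation is that the ratio between consecutive middle terms is $n^{\epsilon\cdot 2^{-(k-1)}}$; by the choice of $K$ this ratio is at least $(\epsilon\delta)^{-\Omega(1)}$ throughout, hence at least $2$ once $\epsilon,\delta<0.01$. The middle sum is therefore geometrically dominated by its last term, giving a total contribution of order $n^2(\epsilon\delta)^c$ for an absolute constant $c>0$ close to $8$. Since $(\epsilon\delta)^c \ll \delta$ whenever $\epsilon,\delta<0.01$, combining this with the estimates above yields $\sum_{\lambda_i \leq T^*}\lambda_i^2 \leq \delta n^2$, as required.
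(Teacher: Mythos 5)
Your proof is correct and follows essentially the same route as the paper's: iterate Lemma~\ref{lem:recursive} with geometrically increasing thresholds starting from the trivial bound $S_0 \leq n(-\lambda_n)$, obtaining $S_T \lesssim n(n/T)^s$ with $s = \frac{1/4-\epsilon}{1/4+\epsilon} < 1$, and then sum the squared eigenvalues over threshold ranges. The paper packages this iteration into the general-purpose Lemma~\ref{lem:solving-recursion-general} (integrating $\int_0^H S_t\,dt$ rather than summing over discrete ranges, which sidesteps the geometric-domination step you needed), but the underlying computation and resulting bound $\sum_{i\notin L_H}\lambda_i^2 = O\bigl(n^2(\epsilon\delta)^{\Omega(1)}\bigr)$ are the same.
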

This lemma reveals an important piece of information: the square mass of the eigenvalues of $G$ with magnitude $o(n)$ is $o(n^2)$. The next lemma translates the spectral information to the graph side, showing that such $G$ is either close to a disjoint union of cliques, or has a large negative eigenvalue.
\begin{lemma}
\label{lem:spectrum-to-clique}
For any $\epsilon > 0$, there exists a $\delta = \delta(\epsilon) > 0$ such that the following holds for all $\gamma > 0$. Let $G$ be a graph on $n \gg_{\gamma, \epsilon} 1$ vertices. Suppose $G$ satisfies
$$\sum_{i: \lambda_i < \gamma n} \lambda_i^2 \leq \delta n^2.$$
Then one of the followings hold.
\begin{enumerate}
    \item $G$ is $\epsilon$-close to a disjoint union of cliques.
    \item The least eigenvalue of $G$ is at most $-c_{\epsilon, \gamma} \cdot n$, where $c_{\epsilon, \gamma} > 0$ is a constant depending only on $\epsilon$ and $\gamma$.
\end{enumerate}
\end{lemma}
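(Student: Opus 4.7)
The plan is to prove the contrapositive of the non-trivial direction: assume case (2) fails, so that $\lambda_n \geq -c_{\epsilon,\gamma} n$ for a suitably small constant, and then show case (1) must hold. The strategy is to approximate $A+I$ by a low-rank positive semidefinite matrix, extract a Euclidean embedding of the vertices, and cluster to recover a disjoint-union-of-cliques partition.

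First I would set $L = \{i : \lambda_i \geq \gamma n\}$; since $\sum \lambda_i^2 \leq n^2$, we have $|L| \leq \gamma^{-2}$. Define $B := \sum_{i \in L}(\lambda_i+1) v_i v_i^\top$, which is PSD of rank at most $|L|$. Using the hypothesis together with the Cauchy--Schwarz bound $|\sum_{i \in L}\lambda_i| \leq n/\gamma$, expanding $\sum_{i \notin L}(\lambda_i+1)^2$ gives $\|A+I-B\|_F^2 = O(\delta) n^2$ for $n \gg_{\gamma,\epsilon} 1$. Factoring $B = UU^\top$ with rows $u_j \in \RR^{|L|}$ yields an embedding with $u_j \cdot u_k = B_{jk}$; the Frobenius bound then forces $\|u_j\|^2 \approx 1$ for typical vertices and $u_j \cdot u_k \approx A_{jk} \in \{0, 1\}$ for typical pairs.

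Next I would cluster the vertices by covering $\RR^{|L|}$ with an $\eta$-net, obtaining clusters $V_1, \ldots, V_K$ with $K = K(\epsilon, \gamma)$ and representatives $\hat u_a$. For $j \in V_a, k \in V_b$, the inner product $u_j \cdot u_k$ lies within $O(\eta)$ of $p_{ab} := \hat u_a \cdot \hat u_b$. Choosing a threshold $\mu > 0$, pairs with $p_{ab} \geq 1-\mu$ are ``merged,'' those with $p_{ab} \leq \mu$ are ``separated,'' and the remaining ``intermediate'' pairs satisfy $|V_a||V_b| \leq O(\delta / \mu^2) n^2$ by the Frobenius bound. Taking the transitive closure of the merge relation on cluster indices---justified by the geometric estimate $\|\hat u_a - \hat u_b\| \leq O(\sqrt \mu)$ for merged pairs---yields super-clusters $\mathcal V_1, \ldots, \mathcal V_r$.

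Finally, letting $\tilde G$ be the disjoint union of cliques on the super-clusters, the symmetric difference $|E(G) \triangle E(\tilde G)|$ decomposes into three pieces: missing clique edges within super-clusters, spurious edges between distinct super-clusters, and edges involved in intermediate pairs. Each is bounded by the Frobenius budget or by $O(\delta/\mu^2) n^2$; tuning $\delta = \delta(\epsilon)$ small enough relative to $\mu$ yields the target $\epsilon n^2$. The hard part will be balancing $\delta$'s dependence on $\epsilon$ alone against the fact that the cluster count $K$ and the chain-transitivity loss in super-cluster formation both grow with $\gamma^{-1}$. Case (2) is the escape hatch for this tension: in any configuration where the clustering refuses to collapse cleanly---for instance, if two large clusters $V_a, V_b$ retain intermediate between-density---one constructs a test vector of the form $x = \alpha \mathbf{1}_{V_a} - \beta \mathbf{1}_{V_b}$ whose Rayleigh quotient with $A$ is negative of order $-\Omega_{\epsilon,\gamma}(n)$, certifying $\lambda_n \leq -c_{\epsilon,\gamma} n$. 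Verifying that every possible ``bad'' configuration is either absorbed into the case (1) modification budget or witnessed by such a negative Rayleigh quotient is the most delicate bookkeeping step of the proof.
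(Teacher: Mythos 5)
Your overall architecture closely tracks the paper's: both build a low‑rank approximation to $A_G$ from the large eigenvalues (your $B=\sum_{i\in L}(\lambda_i+1)v_iv_i^\top$, the paper's $HH^\top=\sum_{i\in L_{\gamma n}}\lambda_i v_iv_i^\top$), both cluster vertices by their embedding vectors into $K=K(\gamma)$ parts, both classify cluster pairs as dense/sparse/intermediate, and both use an explicit test vector to certify $\lambda_n\leq -c_{\epsilon,\gamma}n$ when two dense pairs and one sparse pair form an ``open triangle'' of clusters. The content of those steps is essentially identical.

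Where you diverge---and where there is a genuine gap---is the final reassembly. You propose to take the transitive closure of the ``merge'' relation on clusters and argue directly that the resulting super-clusters give a disjoint union of cliques within $\epsilon n^2$. The problem, which you flag but do not resolve, is that a merge chain of length up to $K$ costs $\|\hat u_a-\hat u_b\|=O(K\sqrt\mu)$, and $K$ grows with $\gamma^{-1}$ (the rank bound is $|L|\leq\gamma^{-2}$ and you are covering a ball in $\RR^{|L|}$). To keep merged super-clusters looking like cliques you must take $\mu\ll K^{-2}=\mu(\gamma)$, which then makes your intermediate‑pair budget $O(\delta/\mu^2)n^2$ force $\delta$ to depend on $\gamma$. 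But the lemma explicitly requires $\delta=\delta(\epsilon)$ only, with $\gamma$ allowed to be arbitrarily small for that fixed $\delta$; your route cannot meet this. The paper sidesteps the chaining entirely by never forming super-clusters: it observes that a graph is a disjoint union of cliques iff it has no induced $K_{1,2}$, so by the induced removal lemma one only needs to show there are $o(n^3)$ induced cherries. Every cherry with all three vertex pairs ``good'' (large clusters, pure pair, edge agrees with pair type) yields the open-triangle test vector and hence case (2); otherwise each cherry contains a ``bad'' pair, and the total bad pairs are $O(\delta^{1/3})n^2$ (Frobenius budget plus small-part waste), both \emph{independent of $\gamma$}. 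So either case (2) holds, or the cherry count is $O(\delta^{1/3})n^3\leq\delta_{K_{1,2}}(\epsilon)n^3$, and removal gives case (1). Replacing your transitive-closure step with this removal-lemma reduction is the missing ingredient; the rest of your proposal then goes through.
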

\cref{thm:main} readily follows from these lemmas.
\begin{proof}[Proof of \cref{thm:main}]
    We prove the contrapositive. Let $G$ be a graph on $n \gg_\epsilon 1$ vertices with $\lambda_n \geq -n^{1/4 - \epsilon}$. Let $\delta$ be the constant in \cref{lem:spectrum-to-clique}. Without loss of generality, we may assume that $\epsilon, \delta \in (0, 0.01)$. By \cref{lem:top-concentration}, we have
    $$\sum_{i: \lambda_i \leq (\epsilon\delta)^{1 / \epsilon} n} \lambda_i^2 \leq \delta n^2.$$
    Applying \cref{lem:spectrum-to-clique} with $\gamma = (\epsilon\delta)^{1 / \epsilon}$, we conclude that $G$ is either $\epsilon$-close to a disjoint union of cliques, or $\lambda_n(G) \leq -c_{\epsilon, \gamma} \cdot n$. The latter contradicts the assumption that $\lambda_n(G) \geq -n^{1/4 - \epsilon}$ for $n \gg_\epsilon 1$, so $G$ is $\epsilon$-close to a disjoint union of cliques as desired.
\end{proof}
\section{The key recursion}
\label{sec:lem1}
As motivation for \cref{lem:recursive}, we first consider the case where $G = \mathsf{Cay}(A, S)$ is the Cayley graph of an abelian group $A$. Let $\hat{A}$ be the dual group of $A$. The spectrum of $G$ is given by the Fourier transforms of $S$
$$\lambda_i = \hat{1}_S(\chi_i) = \sum_{a \in S} \chi_i(a)$$
where $\chi_1, \cdots, \chi_n$ are distinct elements of $\hat{A}$. We present a short proof of Lemma~\ref{lem:recursive} in this setting. 
\begin{proof}[Proof of Lemma~\ref{lem:recursive} for Abelian Cayley graphs]
    Observe that $1_S = 1_S^2$. Taking the Fourier transform, for any $\chi \in \hat{A}$ we have
    $$\hat{1}_S(\chi) = \frac{1}{n}\sum_{\xi + \mu = \chi} \hat{1}_S(\xi) \hat{1}_S(\mu).$$
    
    Let $L_T + L_T$ denote the set of $i$ such that $\chi_i = \chi_j + \chi_k$ for some $j, k \in L_T$. For each $i \in L_T + L_T$, we have
    $$\hat{1}_S(\chi_i) = \frac{1}{n} \sum_{\xi + \mu = \chi_i} \hat{1}_S(\xi) \hat{1}_S(\mu).$$
    Dropping non-negative terms on the right hand side, we obtain
    $$\hat{1}_S(\chi_i) \geq \frac{1}{n} \sum_{\substack{\chi_j + \chi_k = \chi_i \\ j, k \in L_{T}}} \hat{1}_S(\chi_j) \hat{1}_S(\chi_k) +  \frac{2}{n}\sum_{\substack{\xi + \mu = \chi_i \\ \hat{1}_S(\xi) > 0, \hat{1}_S(\mu) < 0}} \hat{1}_S(\xi) \hat{1}_S(\mu).$$
    Using the fact that $\hat{1}_S(\mu) \geq -\lambda_n$, we get
    $$\hat{1}_S(\chi_i) \geq \frac{1}{n} \sum_{\substack{\chi_j + \chi_k = \chi_i \\ j, k \in L_{T}}} \hat{1}_S(\chi_j) \hat{1}_S(\chi_k) - \frac{2 (-\lambda_n)}{n} \sum_{\xi: \hat{1}_S(\xi) > 0} \hat{1}_S(\xi) = \frac{1}{n}\sum_{\substack{\chi_j + \chi_k = \chi_i \\ j, k \in L_{T}}} \hat{1}_S(\chi_j) \hat{1}_S(\chi_k) - \frac{-\lambda_n}{n}\cE(G).$$
    Summing over all $i \in L_T + L_T$, we have
    $$\sum_{i \in L_T + L_T} \hat{1}_S(\chi_i) \geq \frac{1}{n} \sum_{i \in L_T + L_T}\sum_{\substack{\chi_j + \chi_k = \chi_i \\ j, k \in L_{T}}} \hat{1}_S(\chi_j) \hat{1}_S(\chi_k) - \frac{-\lambda_n}{n} \cE(G) \abs{L_T + L_T}.$$
    For the left hand side, we split the sum based on whether $i \in L_{\frac{T^2}{4n}}$ to obtain
    $$\sum_{i \in L_T + L_T} \hat{1}_S(\chi_i) \leq \sum_{i \in L_{\frac{T^2}{4n}}} \hat{1}_S(\chi_i) + \sum_{\substack{i \in L_T + L_T \\ i \notin L_{\frac{T^2}{4n}}}} \hat{1}_S(\chi_i) \leq S_{\frac{T^2}{4n}} + \frac{T^2}{4n} \abs{L_T + L_T}.$$
    For the right hand side, note that 
    $$\sum_{i \in L_T + L_T}\sum_{\substack{\chi_j + \chi_k = \chi_i \\ j, k \in L_{T}}} \hat{1}_S(\chi_j) \hat{1}_S(\chi_k) = S_T^2.$$
    So we conclude that
    $$S_{\frac{T^2}{4n}} + \frac{T^2}{4n} \abs{L_T + L_T} \geq \frac{1}{n}  S_T^2 - \frac{-\lambda_n}{n}\cE(G) \abs{L_T + L_T}.$$
    Finally, using the facts that 
    $$\abs{L_T + L_T} \leq \abs{L_T}^2 \leq \frac{S_T^2}{T^2} \quad \text{ and } \cE(G) \leq 2n (-\lambda_n)$$
    the above simplifies to
    $$n S_{\frac{T^2}{4n}} + \frac{1}{4} S_T^2 \geq S_T^2 - \frac{2n (-\lambda_n)^2}{T^2} S_T^2.$$
    By the assumption that $T \geq 4(-\lambda_n) \sqrt{n}$, we obtain the inequality
    $$n S_{\frac{T^2}{4n}} + \frac{1}{4} S_T^2 \geq S_T^2 - \frac{1}{4} S_T^2$$
    giving
    $$S_T^2 \leq 2n S_{\frac{T^2}{4n}}$$
    as desired.
\end{proof}
In order to mirror the proof for general graphs $G$, we begin with the identity
$$A_G = A_G \circ A_G.$$
Applying the spectral decomposition to both sides, we have
$$\sum_{i = 1}^n \lambda_i  \bv_i \bv_i^\top = \sum_{i, j = 1}^n \lambda_i \lambda_j (\bv_i \bv_i^\top) \circ (\bv_j \bv_j^\top) = \sum_{i, j = 1}^n \lambda_i \lambda_j (\bv_i \circ \bv_j) (\bv_i \circ \bv_j)^\top.$$
Therefore, for each test vector $q \in \RR^n$, we have
$$q^\top \left(\sum_{i = 1}^n \lambda_i   \bv_i \bv_i^\top\right) q  = q^\top \left(\sum_{i, j = 1}^n \lambda_i \lambda_j (\bv_i \circ \bv_j) (\bv_i \circ \bv_j)^\top\right)q.$$
Simplifying, we obtain
$$\sum_{i = 1}^n \lambda_i \langle \bv_i, q\rangle^2 = \sum_{i, j = 1}^n \lambda_i \lambda_j \langle \bv_i \circ \bv_j, q\rangle^2.$$
It remains to choose a good test vector $q$. Motivated by the proof of the Abelian Cayley case, we choose $q$ to be a random Gaussian vector on a certain subspace.\footnote{Equivalently, here we can consider the quantity $\tr(\Pi_W A_G \Pi_W)$, where $\Pi_W$ is a projection matrix defined below. }
\begin{definition}
    Let $W$ be a $d$-dimensional linear subspace of $\RR^n$. A \emph{standard Gaussian vector $q \sim N(0, I_W)$ on W} is a random variable defined as follows: take an orthonormal basis $\bw_1, \cdots, \bw_d$ of $W$, let $x \sim N(0, I_d)$ be the standard $d$-dimensional Gaussian random variable, and let
    $$q = \sum_{i = 1}^d x_i \bw_i.$$
    By the rotational invariance of the Gaussian random variable, $q$ is independent of the choice of the basis $\bw_i$.
\end{definition}
 We collect some useful properties of $q$ in the following proposition, which can be easily verified.
\begin{proposition}
\label{prop:Gaussian-basic}
    Let $q \sim N(0, I_W)$. Then we have
    
    1) We have $\EE \norm{q}_2^2 = \dim W$.

    2) For any $\bw \in \RR^n$, we have
    $$\EE \langle \bw, q \rangle^2 = \norm{\Pi_W \bw}_2^2 \leq \norm{\bw}_2^2.$$
    where $\Pi_W$ is the orthogonal projection onto $q$. \qed
\end{proposition}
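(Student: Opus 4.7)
The plan is to verify both parts by direct computation after fixing an orthonormal basis $\bw_1, \ldots, \bw_d$ of $W$ (with $d = \dim W$) and using only the defining property that the coefficients $x_1, \ldots, x_d$ in the expansion $q = \sum_i x_i \bw_i$ are independent standard normal random variables, so that $\EE[x_i x_j] = \delta_{ij}$.

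For part (1), I would expand
$$\norm{q}_2^2 = \Big\langle \sum_i x_i \bw_i, \sum_j x_j \bw_j\Big\rangle = \sum_{i,j} x_i x_j \langle \bw_i, \bw_j\rangle.$$
By orthonormality this collapses to $\sum_i x_i^2$, and taking expectations gives $\EE \norm{q}_2^2 = \sum_i \EE x_i^2 = d$. For part (2), the identity $\langle \bw, q\rangle = \sum_i x_i \langle \bw, \bw_i\rangle$ combined with $\EE[x_i x_j] = \delta_{ij}$ yields
$$\EE \langle \bw, q\rangle^2 = \sum_i \langle \bw, \bw_i\rangle^2.$$
The right-hand side is exactly $\norm{\Pi_W \bw}_2^2$, since in the chosen basis $\Pi_W \bw = \sum_i \langle \bw, \bw_i\rangle \bw_i$. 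The inequality $\norm{\Pi_W \bw}_2^2 \leq \norm{\bw}_2^2$ then follows from the Pythagorean theorem applied to the orthogonal decomposition $\bw = \Pi_W \bw + (\bw - \Pi_W \bw)$, or equivalently from Bessel's inequality.

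There is no real obstacle here; the only subtlety worth mentioning is that the computation is carried out in a specific orthonormal basis, while the quantities $\EE \norm{q}_2^2$ and $\EE \langle \bw, q\rangle^2$ depend only on the distribution of $q$. This is consistent because the distribution of $q$ is basis-independent by the rotational invariance of the standard Gaussian, as already noted in the definition.
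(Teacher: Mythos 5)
Your proof is correct, and it fills in a computation that the paper simply labels ``can be easily verified'' and omits; the direct expansion in an orthonormal basis using $\EE[x_i x_j]=\delta_{ij}$ is exactly the intended argument. Nothing further is needed.
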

Now let us prove Lemma~\ref{lem:recursive} in full generality.
\begin{proof}[Proof of Lemma~\ref{lem:recursive}]
Let $W$ be the subspace of $\RR^n$ spanned by the vectors $\{\bv_i \circ \bv_j: i, j \in S_T\}$. Let $q\sim N(0, I_W)$ be the standard Gaussian vector on $W$, and consider the identity 
$$\sum_{i = 1}^n \lambda_i \langle \bv_i, q\rangle^2 = \sum_{i, j = 1}^n \lambda_i \lambda_j \langle \bv_i \circ \bv_j, q\rangle^2.$$
We estimate the expectations of both sides. For the left--hand side, Parseval's identity gives
$$\sum_{i \notin L_{\frac{T^2}{4n}}}\lambda_i \langle \bv_i, q\rangle^2 \leq \frac{T^2}{4n} \sum_{i = 1}^n \langle \bv_i, q\rangle^2 = \frac{T^2}{4n} \norm{q}_2^2.$$
Therefore, we have
$$\sum_{i = 1}^n \lambda_i \langle \bv_i, q\rangle^2 \leq \sum_{i \in L_{\frac{T^2}{4n}}} \lambda_i \langle \bv_i, q\rangle^2 + \frac{T^2}{4n} \norm{q}_2^2.$$
Taking expectation, we have
$$\EE \sum_{i = 1}^n \lambda_i \langle \bv_i, q\rangle^2 \leq \sum_{i \in L_{\frac{T^2}{4n}}} \lambda_i \EE \langle \bv_i, q\rangle^2 + \frac{T^2}{4n} \EE \norm{q}_2^2$$
In light of \cref{prop:Gaussian-basic}, we obtain
\begin{equation}
    \label{eq:LHS}
    \EE \sum_{i = 1}^n \lambda_i \langle \bv_i, q\rangle^2 \leq S_{\frac{T^2}{4n}} + \frac{T^2}{4n} \dim W.
\end{equation}
We move on to the right--hand side. Dropping non-negative terms, we have
$$\sum_{i, j = 1}^n \lambda_i \lambda_j \langle \bv_i \circ \bv_j, q\rangle^2 \geq \sum_{i, j \in L_T} \lambda_i \lambda_j \langle \bv_i \circ \bv_j, q\rangle^2 - 2\sum_{i \in L_0, j\notin L_0} \lambda_i (-\lambda_j) \langle \bv_i \circ \bv_j, q\rangle^2.$$
For $i, j \in L_T$, we have $\bv_i \circ \bv_j \in W$ by the definition of $W$. By \cref{prop:Gaussian-basic} (2) we have
$$\EE \langle \bv_i \circ \bv_j, q\rangle^2 = \norm{\bv_i \circ \bv_j}^2 = \sum_{k = 1}^n \bv_{ik}^2 \bv_{jk}^2.$$
Therefore, we obtain
$$\EE \sum_{i, j \in L_T} \lambda_i \lambda_j \langle \bv_i \circ \bv_j, q\rangle^2 = \sum_{i, j \in L_T} \sum_{k = 1}^n \lambda_i \lambda_j \bv_{ik}^2 \bv_{jk}^2 = \sum_{k = 1}^n \left(\sum_{i \in L_T} \lambda_i \bv_{ik}^2\right)^2.$$
Applying the Cauchy-Schwarz inequality, we conclude
\begin{equation}
    \label{eq:RHS-1}
\EE \sum_{i, j \in L_T} \lambda_i \lambda_j \langle \bv_i \circ \bv_j, q\rangle^2 \geq \frac{1}{n} \left(\sum_{k = 1}^n \sum_{i \in L_T} \lambda_i \bv_{ik}^2\right)^2 = \frac{1}{n} S_T^2.
\end{equation}
Finally, note that
$$\sum_{i \in L_0, j\notin L_0} \lambda_i (-\lambda_j) \langle \bv_i \circ \bv_j, q\rangle^2 \leq (-\lambda_n) \sum_{i \in L_0, j\notin L_0} \lambda_i \langle \bv_i \circ \bv_j, q\rangle^2.$$
Using the identity $\langle \bv_i \circ \bv_j, q\rangle = \langle \bv_i \circ q, \bv_j \rangle$ and Parseval's identity, we have
$$\sum_{j \notin L_0} \langle \bv_i \circ \bv_j, q\rangle^2 \leq \norm{\bv_i \circ q}^2.$$
Therefore, we have
$$\sum_{i \in L_0, j\notin L_0} \lambda_i (-\lambda_j) \langle \bv_i \circ \bv_j, q\rangle^2 \leq (-\lambda_n) \sum_{i \in L_0} \lambda_i \norm{\bv_i \circ q}^2 = (-\lambda_n) \sum_{i \in L_0} \sum_{k = 1}^n \lambda_i \bv_{ik}^2 q_k^2.$$
We now consider the spectral decomposition again
$$A_G = \sum_{i = 1}^n \lambda_i  \bv_i \bv_i^\top.$$
Considering the diagonal entry at $(k, k)$, we have
$$0 = \sum_{i = 1}^n \lambda_i  \bv_{ik}^2.$$
Therefore, we have
$$\sum_{i \in L_0} \lambda_i \bv_{ik}^2 = \sum_{i \notin L_0} (-\lambda_i) \bv_{ik}^2 \leq -\lambda_n \sum_{i \notin L_0}\bv_{ik}^2 \leq -\lambda_n.$$
So we get
$$\sum_{i \in L_0, j\notin L_0} \lambda_i (-\lambda_j) \langle \bv_i \circ \bv_j, q\rangle^2 \leq (-\lambda_n) \sum_{k = 1}^n (-\lambda_n) q_k^2 = (-\lambda_n)^2 \norm{q}_2^2.$$
By \cref{prop:Gaussian-basic}, we obtain
\begin{equation}
\label{eq:RHS-2}
\EE \sum_{i \in L_0, j\notin L_0} \lambda_i (-\lambda_j) \langle \bv_i \circ \bv_j, q\rangle^2 \leq (-\lambda_n)^2  \dim W.
\end{equation}
Finally, combining \eqref{eq:LHS}, \eqref{eq:RHS-1} and \eqref{eq:RHS-2}, we obtain
$$S_{\frac{T^2}{4n}} + \frac{T^2}{4n} \dim W \geq \frac{1}{n}S_T^2 - 2(-\lambda_n)^2  \dim W.$$
By definition, we have $\dim W \leq \abs{L_T}^2 \leq \frac{S_T^2}{T^2}$. Substituting into the above, we have
$$ nS_{\frac{T^2}{4n}} + \frac{1}{4} S_T^2 \geq S_T^2 - \frac{2n(-\lambda_n)^2}{T^2} S_T^2.$$
Using the assumption that $T \geq 4\sqrt{n} (-\lambda_n)$, we conclude that
$$nS_{\frac{T^2}{4n}} \geq \frac{S_T^2}{2}$$
as desired.
\end{proof}
\section{Solving the recursion}
\label{sec:lem2}
In this section, we prove \cref{lem:top-concentration} by solving the recursion given by \cref{lem:recursive}. As we have to repeat this argument later with different parameters, we show a general lemma for solving such recursions given appropriate energy and least eigenvalue bounds.
\begin{lemma}
\label{lem:solving-recursion-general}
Let $p,q,r, C > 0$ be constants, and $n \gg_{p,q,r,C} 1$. Let $\lambda_1 \geq  \cdots \geq \lambda_n$ be a sequence of real numbers. Let $L_T = \{i \in [n]: \lambda_i \geq T\}$, and let $S_T = \sum_{i \in L_T} \lambda_i$; note that $S_T$ is non--increasing in $T$ when $T \geq 0$. Suppose that
\begin{enumerate}
    \item We have $-\lambda_n \leq Cn^p.$

    \item We have $\sum_{i = 1}^n \abs{\lambda_i} \leq Cn^q$.
    
    \item $S_T$ satisfies the following recursive relation for all $T \geq Cn^r$
    $$S_T^2 \leq Cn S_{\frac{T^2}{Cn}}.$$
\end{enumerate}
and the following relations hold between the constant parameters
\begin{enumerate}
    \item $p,r \in (0, 1)$, $q \in (1, 2)$, $C \geq 1$.
    \item $q + \max(p, r) < 2$.
\end{enumerate}
Then for any $H \in [n^{p + q - 1}, n]$, we have
$$\sum_{i \notin L_H} \lambda_i^2 \leq \frac{2C^{1 + s}}{1 - s} \cdot n^{1 + s} H^{1 - s}$$
where $s = \frac{q - 1}{1 - r}$.
\end{lemma}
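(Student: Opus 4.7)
The plan is to first derive a pointwise upper bound of the form $S_T \le (Cn)^{1+s} T^{-s}$ on the relevant range of $T$, and then use a Fubini-style identity to convert this into the desired bound on $\sum_{i \notin L_H} \lambda_i^2$.

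For the pointwise bound, I would take the ansatz $\phi(T) = (Cn)^{1+s} T^{-s}$ and observe that it is self-similar under the recursion: a direct computation gives $\phi(T)^2 = Cn \cdot \phi(T^2/(Cn))$ for \emph{any} exponent $s$. Writing $a(T) = \log S_T - \log \phi(T)$, hypothesis (3) becomes the genuine contraction $a(T) \le \tfrac{1}{2} a(T^2/(Cn))$. Starting from $T \in [Cn^r, Cn]$ and iterating the map $T \mapsto T^2/(Cn)$, one falls below $Cn^r$ in finitely many steps, after which the trivial bound $S_t \le \sum_i |\lambda_i| \le Cn^q$ takes over. The specific choice $s = (q-1)/(1-r)$ is now forced by requiring the ansatz to match at the boundary: a direct check gives $\phi(Cn^r) = Cn^q$, so $a(t) \le 0$ exactly at $t = Cn^r$, and the contraction propagates this to all larger $T$, yielding $S_T \le \phi(T)$ throughout $[Cn^r, Cn]$.

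For the main estimate, I would use the Fubini identity
\[
  \sum_{i:\, 0 \le \lambda_i < H} \lambda_i^2 \;=\; \int_0^H S_t \, dt \;-\; H S_H,
\]
discard the non-positive $-HS_H$ term, and split the remaining integral at $Cn^r$. The trivial bound gives $\int_0^{Cn^r} S_t \, dt \le C^2 n^{q+r}$, while the pointwise bound gives $\int_{Cn^r}^H \phi(t) \, dt = \frac{(Cn)^{1+s}}{1-s} \bigl(H^{1-s} - (Cn^r)^{1-s}\bigr)$. The key algebraic identity is that the same boundary match produces $(Cn)^{1+s}(Cn^r)^{1-s} = C^2 n^{q+r}$, so the two copies of $C^2 n^{q+r}$ telescope with a leftover $-\tfrac{s}{1-s} C^2 n^{q+r} \le 0$, leaving $\sum_{0 \le \lambda_i < H} \lambda_i^2 \le \frac{C^{1+s}}{1-s} n^{1+s} H^{1-s}$.

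The contribution from negative eigenvalues is handled crudely: $\sum_{\lambda_i<0} \lambda_i^2 \le |\lambda_n| \cdot \sum_i |\lambda_i| \le C^2 n^{p+q}$. The hypothesis $H \ge n^{p+q-1}$ ensures $n^{1+s} H^{1-s} \ge n^{p+q + s(2-p-q)}$, which dominates $n^{p+q}$ by a positive polynomial factor since $p+q < 2$ and $s > 0$; the factor of $2$ in the target bound provides exactly the slack to absorb this negative-eigenvalue contribution once $n \gg_{p,q,r,C} 1$. The main subtlety, or rather the decisive insight, is that $s = (q-1)/(1-r)$ is the unique exponent simultaneously achieving the boundary match $\phi(Cn^r) = Cn^q$ and the telescoping identity $(Cn)^{1+s}(Cn^r)^{1-s} = C^2 n^{q+r}$; any other choice of $s$ would either violate the base case of the iteration or leave an unabsorbable residual term in the integral.
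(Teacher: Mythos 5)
Your proof is correct and follows essentially the same strategy as the paper: iterate the recursion to obtain the pointwise bound $S_T \leq (Cn)^{1+s}T^{-s}$, integrate it against $t$ to control the nonnegative eigenvalues below $H$, and absorb the negative-eigenvalue contribution $C^2 n^{p+q}$ using $H \geq n^{p+q-1}$. Your reformulation via the self-similar ansatz and the logarithmic error $a(T)=\log S_T - \log\phi(T)$, iterating downward from $T$ into the range where the trivial energy bound closes the argument, is a clean repackaging of the paper's forward induction along the geometric sequence $T_i$, not a substantively different route.
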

\begin{proof}
Let $T_0 = Cn^{2r - 1}$, and recursively define $T_{i + 1} = (CnT_i)^{1/2}$ for $i \geq 0$. Then we have
$$\frac{Cn}{T_{i + 1}} = \sqrt{\frac{Cn}{T_i}}.$$   
As $T_1 = Cn^r$ and $T_i$ is non-decreasing, we have $T_{i + 1} \geq Cn^r$ for all $i \geq 0$, so condition 3) shows that
$$S_{T_{i + 1}} \leq \sqrt{Cn S_{T_i}}.$$
We now argue by induction that for each $i \geq 0$
\begin{equation}
    \label{eq:induction}
\quad S_{T_{i}} \leq Cn \cdot \left(\frac{Cn}{T_{i + 1}}\right)^{s}.
\end{equation} 
For the base case $i = 0$, note that 
$$S_{T_0} \leq \sum_{i = 1}^n \abs{\lambda_i} \leq Cn^q$$
and
$$Cn \cdot \left(\frac{Cn}{T_{1}}\right)^{s} = Cn \cdot \left(\frac{Cn}{Cn^r}\right)^{s} = Cn^{1 + (1 - r)s} = Cn^{q}.$$
so the base case holds.

Now suppose \eqref{eq:induction} holds for some $i \geq 0$. We have
$$S_{T_{i + 1}} \leq \sqrt{Cn S_{T_i}} \leq \sqrt{Cn \cdot Cn \cdot \left(\frac{Cn}{T_{i + 1}}\right)^{s}} = Cn \cdot \left(\sqrt{\frac{Cn}{T_{i + 1}}}\right)^{s} = Cn \cdot \left(\frac{Cn}{T_{i + 2}}\right)^{s}.$$
So \eqref{eq:induction} holds for $(i + 1)$ as well. By induction, the desired inequality \eqref{eq:induction} holds for all $i$. 

Observe that $\lim_{i \to \infty} T_i = Cn \geq n$. Hence, for each $T \in [T_0, H]$, there exists some $i \geq 0$ such that $T_i \leq T \leq T_{i + 1}$. As $S_{T}$ is non--increasing in $T$, we have
$$S_T \leq S_{T_i} \leq Cn \cdot \left(\frac{Cn}{T_{i + 1}}\right)^{s} \leq Cn \cdot \left(\frac{Cn}{T}\right)^{s}.$$
For $T \in [0, T_0]$, we also have
$$S_T \leq \sum_{i = 1}^n \abs{\lambda_i} \leq Cn^q = Cn \cdot \left(n^{1 - r}\right)^s = Cn \cdot \left(\frac{Cn}{T_1}\right)^{s} \leq Cn \cdot \left(\frac{Cn}{T}\right)^{s}.$$
Hence, we conclude that
$$\sum_{i: \lambda_i \in [0, H]} \lambda_i^2 \leq \int_0^{H} S_t dt \leq \int_{0}^{H} Cn \cdot \left(\frac{Cn}{t}\right)^{s} dt.$$
As $q + r < 2$, we have $s = \frac{q - 1}{1 - r} < 1$, so the integral converges, and we obtain
$$\sum_{i: \lambda_i \in [0, H]} \lambda_i^2 \leq \frac{1}{1 - s} \cdot (Cn)^{1 + s} H^{1 - s}.$$
It remains to estimate the contribution of the negative $\lambda_i$'s, which we achieve through 1). We have 
$$\sum_{i \notin L_{0}} \lambda_i^2 \leq \max(-\lambda_n, 0) \cdot \sum_{i = 1}^n \abs{\lambda_i} \leq C^2 n^{p + q}.$$
As $H \geq n^{p + q - 1}$, both $(2 - p - q)$ and $s$ are positive, and $n$ is sufficiently large, we have
$$n^{1 + s} H^{1 - s} \geq n^{p + q} n^{(2 - p - q)s} \geq C^2 n^{p + q}.$$
We conclude that
$$\sum_{i \notin L_H} \lambda_i^2 = \sum_{i: \lambda_i \in [0, H]} \lambda_i^2 + \sum_{i \notin L_{0}} \lambda_i^2 \leq  \frac{2C^{1 + s}}{1 - s}n^{1 + s}H^{1 - s}$$
as desired.
\end{proof}
\begin{proof}[Proof of \cref{lem:top-concentration}]
We check that the spectrum of $G$ satisfies all three conditions of \cref{lem:solving-recursion-general}. 

1) By assumption, we have $-\lambda_n \leq n^{1/4 - \epsilon}$.

2) We have
$$\sum_{i = 1}^n \abs{\lambda_i} = \cE(G) \leq 2n(-\lambda_n) \leq 2n^{5/4 - \epsilon}.$$

3) By \cref{lem:recursive}, for all $T \geq 4(-\lambda_n) \sqrt{n} = 4n^{3/4 - \epsilon}$, we have
$$S_T^2 \leq 2n S_{\frac{T^2}{4n}}.$$
Therefore, \cref{lem:solving-recursion-general} applies with the choice of parameters
$$(p,q,r,C) = \left(\frac{1}{4} - \epsilon, \frac{5}{4} - \epsilon, \frac{3}{4} - \epsilon, 4\right).$$
Take $H = (\epsilon\delta)^{1 / \epsilon} n$. Note that $H \geq n^{p + q - 1}$ for $n \gg_{\delta, \epsilon} 1$, and $s = \frac{q - 1}{1 - r} = \frac{\frac{1}{4} - \epsilon}{\frac{1}{4} + \epsilon} \leq 1 - 4\epsilon$. Thus we have
$$\sum_{i \notin L_H} \lambda_i^2 \leq \frac{2\cdot 4^{2 - 4\epsilon}}{4\epsilon} n^{2 - 4\epsilon} H^{4\epsilon}.$$
Simplifying, we obtain
$$\sum_{i \notin L_H} \lambda_i^2 \leq \frac{2\cdot 4^{2 - 4\epsilon}}{4\epsilon} (\epsilon\delta )^4 n^{2} \leq \delta n^2$$
as desired.
\end{proof}
\section{Translating the spectral information}
\label{sec:lem3}
In this section, we prove \cref{lem:spectrum-to-clique}, which translates the spectral bound in \cref{lem:top-concentration} into structural information about the graph $G$. First, we show a criterion for a graph to be close to a disjoint union of cliques. Recall that a \emph{cherry} is an induced copy of $K_{1, 2}$.
\begin{lemma}
\label{lem:cherries}
   For every $\epsilon > 0$, there exists some constant $\delta_{K_{1, 2}} = \delta_{K_{1, 2}}(\epsilon) > 0$ such that the following holds. Suppose a graph $G$ on $n$ vertices has at most $\delta_{K_{1, 2}} n^3$ cherries. Then $G$ is $\epsilon$-close to a disjoint union of cliques. 
\end{lemma}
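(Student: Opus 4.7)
The plan is to apply the induced graph removal lemma. The starting observation is the following classical equivalence: a graph contains no induced copy of $K_{1,2}$ if and only if it is a disjoint union of cliques. Indeed, if $u \sim v$ and $v \sim w$ but $u \not\sim w$ then $\{u,v,w\}$ spans an induced cherry; conversely, inside any clique no induced $K_{1,2}$ can appear, so disjoint unions of cliques are cherry-free. Hence the conclusion ``$G$ is $\epsilon$-close to a disjoint union of cliques'' is literally the same as ``$G$ is $\epsilon$-close to an induced-$K_{1,2}$-free graph.''

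With this reformulation, the statement becomes a direct instance of the induced graph removal lemma of Alon--Fischer--Krivelevich--Szegedy: for every graph $H$ and every $\epsilon > 0$ there exists $\delta = \delta_H(\epsilon) > 0$ such that any $n$-vertex graph containing fewer than $\delta n^{|V(H)|}$ induced copies of $H$ can be made induced-$H$-free by adding or removing at most $\epsilon n^2$ edges. I would apply it with $H = K_{1,2}$ and set $\delta_{K_{1,2}}(\epsilon)$ to be the resulting $\delta$; the lemma follows immediately.

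There is no real obstacle beyond choosing the right black box: the combinatorial characterization is immediate and the induced removal lemma is directly on point. If one prefers to avoid invoking it as a black box, a self-contained proof can be given via Szemer\'edi's regularity lemma. One takes a regular partition with parameter $\eta \ll \epsilon$, and argues that (i) no regular pair can have intermediate density $\eta \le d \le 1 - \eta$, since such a pair together with any vertex of moderate degree into either side already produces $\Omega(\eta^2 n^3)$ induced cherries; and (ii) no three clusters can have the ``induced $P_3$'' pattern (two dense regular pairs with a sparse regular pair between the outer clusters), for the same reason. After cleaning up irregular and low-density pairs, the cluster graph is forced to be a disjoint union of cliques, and merging the clusters within each such clique into a single part of the target partition gives a disjoint union of cliques differing from $G$ in at most $\epsilon n^2$ edges. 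Either route yields the result; both produce tower-type dependencies of $\delta_{K_{1,2}}$ on $\epsilon$, which is harmless for the applications in this paper.
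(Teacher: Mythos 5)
Your proof is correct and is essentially identical to the paper's: both reformulate the statement as saying $G$ is $\epsilon$-close to an induced-$K_{1,2}$-free graph and then invoke the induced graph removal lemma of Alon--Fischer--Krivelevich--Szegedy with $H = K_{1,2}$.
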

\begin{proof}
    As a graph is a disjoint union of cliques if and only if it has no induced copy of $K_{1, 2}$, this claim is equivalent to the induced graph removal lemma \cite{AFKS2000} with $H = K_{1, 2}$.
\end{proof}
We take $\delta = \left(\frac{\delta_{K_{1, 2}}(\epsilon)}{12}\right)^3$. Without loss of generality, we also assume that $\delta \leq \frac{1}{1000}$ and $\gamma \leq \delta$. Assume $G = (V, E)$ satisfies the assumption of \cref{lem:spectrum-to-clique}, i.e.
$$\sum_{i: \lambda_i < \gamma n} \lambda_i^2 \leq \delta n^2.$$

We perform spectral partitioning on $G$. Let $S = L_{\gamma n}$ be the set of $i$ such that $\lambda_i \geq \gamma n$. Note that
$$|S| \leq \frac{\sum_{i = 1}^n \lambda_i^2}{(\gamma n)^2} \leq \gamma^{-2}.$$
For each vertex $v$, we associate the $|S|$-dimensional vector $H_v \in \RR^S$ whose $i$-th entry is defined by
$$H_{v, i} = \sqrt{\lambda_i} \bv_{iv}.$$
Let $H$ denote the $V \times S$ matrix whose $v$-th row is $H_v$. Then we have
$$HH^\top = \sum_{i \in S} \lambda_i \bv_i \bv_i^\top.$$
Therefore, the spectrum of the matrix $HH^\top - A_G$ is $\{\lambda_i: i \notin S\}$ together with $|S|$ copies of $0$. Thus we have
$$\sum_{u, v \in V} (HH^\top - A_G)_{uv}^2 = \sum_{i \notin S} \lambda_i^2 \leq \delta n^2.$$
We now partition $V(G)$ based on the vectors $H_v$. First, let's estimate the magnitude of such vectors. For each $i \in S$ and $v \in V$, we have
$$H_{v, i} = \sqrt{\lambda_i} \bv_{iv} = \frac{1}{\sqrt{\lambda_i}} \sum_{j \sim i} \bv_{jv}\leq \frac{\sqrt{n}}{\sqrt{\lambda_i}} \leq \gamma^{-1/2}.$$
Therefore, we have $\norm{H_v}_\infty \leq \gamma^{-1/2}$ for every $v \in V$. Thus, we can paritition $V$ into
$$t = \ceil{2\gamma^{-10} + 1}^{\gamma^{-2}}$$ 
parts $V_1, \cdots, V_t$, such that for two vertices $u, v$ in the same part, we have
$$\norm{H_u - H_v}_\infty \leq \gamma^{9.5}.$$
Thus, for any four vertices $u, u', v, v'$ such that $u$ and $u'$, $v$ and $v'$ lie in the same part, we have
$$\abs{\langle H_u, H_v \rangle - \langle H_{u'}, H_{v'} \rangle} \leq \norm{H_u - H_{u'}}_\infty \norm{H_v}_1 + \norm{H_v - H_{v'}}_\infty \norm{H_{u'}}_1 \leq 2|S| \gamma^{9.5} \cdot \gamma^{-1/2} \leq 2 \gamma^7.$$
We conclude that, for each pair of parts $V_i$ and $V_j$, there is a constant $a_{ij} \in \RR$ such that 
$$\abs{\langle H_u, H_v \rangle - a_{ij}} \leq \gamma^7$$
for every $(u, v) \in V_i \times V_j$.

Let $e(V_i, V_j)$ denote the number of pairs $(u, v) \in V_i \times V_j$ such that $uv$ is an edge in $G$. Following the notation in \cite{NSS2024}, for each positive real number $\mu > 0$, we say a pair of parts $V_i$ and $V_j$ is \emph{$\mu$-dense} if $e(V_i, V_j) \geq (1 - \mu) |V_i| |V_j|$, \emph{$\mu$-sparse} if $e(V_i, V_j) \leq \mu |V_i| |V_j|$, and \emph{$\mu$-impure} if neither holds. We build towards a proof of \cref{lem:spectrum-to-clique} by proving a series of claims.

\textbf{Claim 1: }For any $\mu < 0.1$, if there are three (not necessarily distinct) parts $V_i, V_j, V_k$ such that $V_i$ and $V_j$ are $\mu$-dense, $V_i$ and $V_k$ are $\mu$-dense, and $V_j$ and $V_k$ are $\mu$-sparse, then we have
$$\lambda_n \leq -\frac{\min(|V_i|, |V_j|, |V_k|)}{10}.$$
\begin{proof}[Proof of Claim 1]
Clearly, we have $i \neq j$ and $i \neq k$. We divide into two cases based on whether $j = k$.

\textbf{Case 1: }Suppose $j \neq k$. We consider the test vector $\bx \in \RR^V$ given by 
$$\bx_v = \begin{cases}
\frac{1}{|V_i|}, v \in V_i \\
-\frac{1}{|V_j|}, v \in V_j \\
-\frac{1}{|V_k|}, v \in V_k \\
0, \text{otherwise}
\end{cases}.$$
We compute that
$$\bx^\top A_G \bx = \sum_{t \in \{i,j,k\}} \frac{e(V_t, V_t)}{|V_t|^2} - 2 \frac{e(V_i, V_j)}{|V_i| |V_j|} - 2 \frac{e(V_i, V_k)}{|V_i| |V_k|} + 2\frac{e(V_j, V_k)}{|V_j| |V_k|}.$$
By the trivial bound $e(V_t, V_t) \leq |V_t|^2$ and the assumptions, we obtain
$$\bx^\top A_G \bx \leq 3 - 2(1 - \mu) - 2(1 - \mu) + 2 \mu \leq -1 + 6 \mu \leq -0.4.$$
On the other hand, we have
$$\bx^\top \bx = \frac{1}{|V_i|} + \frac{1}{|V_j|} + \frac{1}{|V_k|} \leq \frac{3}{\min(|V_i|, |V_j|, |V_k|)}.$$
Computing the Rayley quotient, we conclude that
$$\lambda_n(A_G) \leq \frac{\bx^\top A_G \bx}{\bx^\top \bx} \leq -\frac{\min(|V_i|, |V_j|, |V_k|)}{10}$$
as desired.

\textbf{Case 2: }Suppose $j = k$. Analogously, we consider the test vector $\bx \in \RR^V$ given by 
$$\bx_v = \begin{cases}
\frac{1}{|V_i|}, v \in V_i \\
-\frac{1}{|V_j|}, v \in V_j \\
0, \text{otherwise}
\end{cases}.$$
Then we can compute that
$$\bx^\top A_G \bx = \frac{e(V_i, V_i)}{|V_i|^2} - 2 \frac{e(V_i, V_j)}{|V_i| |V_j|} + \frac{e(V_j, V_j)}{|V_j|^2}.$$
By the trivial bound $e(V_i, V_i) \leq |V_i|^2$ and the assumptions, we obtain
$$\bx^\top A_G \bx \leq 1 - 2(1 - \mu) + \mu \leq -1 + 3\mu \leq 0.4.$$
On the other hand, we have
$$\bx^\top \bx = \frac{1}{|V_i|} + \frac{1}{|V_j|} \leq \frac{2}{\min(|V_i|, |V_j|, |V_k|)}.$$
Computing the Rayley quotient, we conclude that
$$\lambda_n \leq \frac{\bx^\top A_G \bx}{\bx^\top \bx} \leq -\frac{\min(|V_i|, |V_j|, |V_k|)}{10}$$
as desired.
\end{proof}
\textbf{Claim 2: }We have
$$\sum_{(V_i, V_j) \text{ is $\delta^{1/3}$-impure}} |V_i||V_j| \leq 2\delta^{1/3} n^2.$$
\begin{proof}[Proof of Claim 2]
    Suppose $(V_i, V_j)$ is $\delta^{1/3}$-impure. Recall that there is a constant $a_{ij} \in \RR$ such that
    $$\abs{\langle H_u, H_v \rangle - a_{ij}} \leq \gamma^7$$
    for all $(u, v) \in V_i \times V_j$. Therefore, we have
    $$\sum_{u \in V_i, v \in V_j} \abs{\langle H_u, H_v \rangle - (A_G)_{uv}} \geq \abs{a_{ij}} (|V_i||V_j| - e(V_i, V_j)) + \abs{1 - a_{ij}} e(V_i, V_j) - \gamma^7 |V_i||V_j|.$$
    As $\abs{a_{ij}} + \abs{1 - a_{ij}} \geq 1$, we obtain
    $$\sum_{u \in V_i, v \in V_j} \abs{\langle H_u, H_v \rangle - (A_G)_{uv}} \geq \min(|V_i||V_j| - e(V_i, V_j), e(V_i, V_j)) - \gamma^7 |V_i||V_j|.$$
    By the definition of impurity, we obtain
    $$\sum_{u \in V_i, v \in V_j} \abs{\langle H_u, H_v \rangle - (A_G)_{uv}} \geq (\delta^{1/3} - \gamma^7) |V_i||V_j|.$$
    Applying the Cauchy-Schwarz inequality and the assumption $\gamma \leq \delta$, we have
    $$\sum_{u \in V_i, v \in V_j} \abs{\langle H_u, H_v \rangle - (A_G)_{uv}}^2 \geq (\delta^{1/3} - \gamma^7)^2 |V_i||V_j| \geq \frac{1}{2} \delta^{2/3} |V_i||V_j|.$$
    On the other hand, recall that
    $$\sum_{u, v \in V} (HH^\top - A_G)_{uv}^2 \leq \delta n^2.$$
    This implies that
    $$\sum_{(V_i, V_j) \text{ is $\delta^{1/3}$-impure}} \frac{1}{2}\delta^{2/3}|V_i||V_j| \leq \sum_{u \in V, v \in V} \abs{\langle H_u, H_v \rangle - (A_G)_{uv}}^2 \leq \delta n^2.$$
    So we have
    $$\sum_{(V_i, V_j) \text{ is $\delta^{1/3}$-impure}} |V_i||V_j| \leq 2\delta^{1/3} n^2$$
    as desired.
\end{proof}
Motivated by the preceding claims, we say a pair of vertices $(u, v) \in V \times V$ is \emph{bad} if any of the following holds. Let $V_i$ be the part $u$ lies in, and $V_j$ be the part $v$ lies in. Recall that $t$ denotes the number of parts $V_i$.
\begin{enumerate}
    \item We have $\min(|V_i|, |V_j|) \leq \delta t^{-1} n$.
    \item The pair $(V_i, V_j)$ is $\delta^{1/3}$-impure.
    \item The pair $(V_i, V_j)$ is $\delta^{1/3}$-dense and $uv$ is not an edge, or the pair $(V_i, V_j)$ is $\delta^{1/3}$-sparse and $uv$ is an edge.
\end{enumerate}
\textbf{Claim 3: }The number of bad vertex pairs in $G$ is at most $4\delta^{1/3} n^2$.
\begin{proof}[Proof of Claim 3]
The number of vertex pairs satisfying 1) is at most
$$2 \sum_{i: |V_i| \leq \delta t^{-1} n} \sum_{j = 1}^t |V_i| |V_j| \leq 2n \sum_{i: |V_i| \leq \delta t^{-1} n} |V_i| \leq 2\delta n^2.$$
By Claim 2, the number of vertex pairs satisfying 2) is at most
$$\sum_{(V_i, V_j) \text{ is $\delta^{1/3}$-impure}} |V_i||V_j| \leq 2\delta^{1/3} n^2.$$
For each $(V_i, V_j)$, the number of pairs $(u, v) \in V_i \times V_j$ satisfying 3) is at most $\delta^{1/3} |V_i||V_j|$. Therefore, the total number of vertex pairs satisfying 3) is at most
$$\sum_{i, j = 1}^t \delta^{1/3} |V_i||V_j| \leq \delta^{1/3} n^2.$$
Summing over all three cases, the total number of bad vertex pairs is at most
$$2\delta n^2 + 2\delta^{1/3} n^2 + \delta^{1/3} n^2 \leq 4 \delta^{1/3} n^2$$
as desired.
\end{proof}
We are ready to complete the proof of \cref{lem:spectrum-to-clique}. We divide into two cases.

\textbf{Case 1: }Suppose $G$ contains a cherry on vertices $(u,v, w)$, with $uv, uw$ edges and $uw$ not an edge, such that $(u, v), (v, w), (w, u)$ are all good pairs. Let $V_i ,V_j, V_k$ be the parts that $u,v, w$ lies in respectively. Then $(V_i, V_j)$ and $(V_i, V_k)$ are $\delta^{1/3}$-dense and $(V_j,V_k)$ is $\delta^{1/3}$-sparse. By Claim 1, we have
$$\lambda_n(G) \leq -\frac{\min(|V_i|, |V_j|, |V_k|)}{10} \leq -\frac{\delta}{10t} n = - c_{\epsilon, \gamma} \cdot n.$$
Recalling that $t = \ceil{2\gamma^{-10} + 1}^{\gamma^{-2}}$, the leading constant $c_{\epsilon, \gamma} = \frac{\delta}{10t}$ depends only on $\epsilon$ and $\gamma$, as desired.

\textbf{Case 2: }Suppose $G$ contains no cherry with vertices $(u,v, w)$ such that $(u, v), (v, w), (w, u)$ are all good pairs. Then each cherry in $G$ must contain a bad pair. By Claim 3, $G$ contains at most
$$12\delta^{1/3} n^3 \leq \delta_{K_{1, 2}} n^3$$
cherries. By \cref{lem:cherries}, $G$ is $\epsilon$-close to a disjoint union of cliques, as desired.

\section{\cref{thm:discrepancy}: The key bounds}
\label{sec:discrepancy-1}
Let's move on to the surplus results. In this section, we carry out the key step in the proof of \cref{thm:discrepancy} by proving an analog of \cref{lem:recursive} with surplus replacing the least eigenvalue. 

First, we recap a couple of useful tools developed in recent years for studying surplus. In particular, we need two upper bounds on surplus in terms of the energy and the least eigenvalue. Using a ground--breaking result of Alon, Makarychev, Makarychev and Naor \cite{AMMN}, R\"{a}ty and Tomon \cite{RT2024} showed that the surplus, which is NP--hard to compute exactly, is well-approximated by the following semi-definite program computable in polynomial time
$$\surp^*(G) := \sup_{\substack{M \succ 0  \\ M_{ii} \leq 1, \forall i \in V}} \frac{1}{2} \langle -A_G, M \rangle$$
where $M \succ 0$ means that $M$ is positive semi--definite. For the sake of completeness, we repeat some of their arguments here.

Observe that $\surp(G)$ is the objective of this program when $M$ is restricted to the matrices $\{ \bv \bv^\top : \bv \in \{\pm 1\}^V\}$. Hence, we have
$$\frac{\surp^*(G)}{K(G)} \leq \surp(G) \leq \surp^*(G)$$
where $K(G)$ is the Grothendieck constant \cite[Definition 2.1]{AMMN} of $G$. Alon, Makarychev, Makarychev and Naor showed that for any loopless graph $G$, we have $K(G) = O(\log n)$ (in fact, this particular result was shown earlier by Charikar and Wirth \cite{CharikarWirth04}). As a corollary, $\surp(G)$ and $\surp^*(G)$ are within a multiplicative factor of $\log n$ of each other. 

R\"{a}ty and Tomon \cite[Lemma 3.2]{RT2024}  used the test matrix 
$$M = \sum_{i: \lambda_i \leq 0} \bv_i \bv_i^\top$$
to show a beautiful relation between surplus and energy.
\begin{lemma}[Energy bound]
\label{lem:surplus-energy}
For any graph $G$, we have
$$
\surp^*(G) \geq \frac{1}{2} \sum_{i: \lambda_i \leq 0} -\lambda_i  = \frac{1}{4} \cE(G).
$$
\end{lemma}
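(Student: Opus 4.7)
The plan is essentially to verify that the matrix $M = \sum_{i : \lambda_i \leq 0} \bv_i \bv_i^\top$ flagged in the paragraph just before the lemma is feasible for the SDP, and then to compute the objective directly. Since the SDP has only two constraints — positive semidefiniteness and diagonal entries at most $1$ — this reduces to two short checks followed by a spectral trace computation.

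First I would check feasibility. The matrix $M$ is PSD because it is a nonnegative combination of rank-one projections $\bv_i \bv_i^\top$. For the diagonal constraint, I use that $\{\bv_1, \ldots, \bv_n\}$ is an orthonormal basis of $\RR^n$, so applying Parseval's identity to each standard basis vector $e_k$ gives $\sum_{i=1}^n \bv_{ik}^2 = 1$. Therefore
\[
M_{kk} \;=\; \sum_{i : \lambda_i \leq 0} \bv_{ik}^2 \;\leq\; \sum_{i=1}^n \bv_{ik}^2 \;=\; 1,
\]
so $M$ is feasible for the program defining $\surp^*(G)$.

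Next I would evaluate the objective. Using the spectral decomposition $A_G = \sum_{j=1}^n \lambda_j \bv_j \bv_j^\top$ and the orthonormality $\bv_i^\top \bv_j = \delta_{ij}$,
\[
\langle A_G, M \rangle \;=\; \tr\!\left( \sum_{j=1}^n \lambda_j \bv_j \bv_j^\top \sum_{i : \lambda_i \leq 0} \bv_i \bv_i^\top \right) \;=\; \sum_{i : \lambda_i \leq 0} \lambda_i.
\]
Therefore $\surp^*(G) \geq \tfrac{1}{2}\langle -A_G, M\rangle = \tfrac{1}{2}\sum_{i : \lambda_i \leq 0}(-\lambda_i)$. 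To identify this with $\tfrac{1}{4}\cE(G)$, I use that $\tr(A_G) = \sum_i \lambda_i = 0$, which forces $\sum_{i : \lambda_i > 0} \lambda_i = \sum_{i : \lambda_i \leq 0} (-\lambda_i)$, so that $\cE(G) = \sum_i |\lambda_i| = 2 \sum_{i : \lambda_i \leq 0}(-\lambda_i)$, finishing the proof.

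There is no real obstacle here: the entire content of the lemma lies in guessing the correct test matrix, which the preceding paragraph already hands us. Both the PSD and diagonal constraints are essentially immediate from the spectral picture, and the objective reduces to a one-line trace computation once one writes $A_G$ in its eigenbasis.
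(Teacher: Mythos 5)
Your proof is correct and follows exactly the approach the paper indicates: the paper only names the test matrix $M = \sum_{i:\lambda_i\le 0}\bv_i\bv_i^\top$ and cites R\"aty--Tomon, while you fill in the feasibility check via Parseval, the trace computation via orthonormality, and the identity $\cE(G)=2\sum_{\lambda_i\le 0}(-\lambda_i)$ from $\tr(A_G)=0$. Nothing to add.
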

This energy bound can replace the least eigenvalue bound for most arguments in \cref{thm:main}. However, one particular step becomes tricky. The proof of \cref{lem:recursive} depends on \eqref{eq:RHS-2}, an upper bound on the quantity
\begin{equation}
\label{eq:strange-sum}
\sum_{i \in L_0, j\notin L_0} \lambda_i (-\lambda_j) \langle \bv_i \circ \bv_j, q\rangle^2.
\end{equation}
We are unable to achieve a tight analog of \eqref{eq:RHS-2} with an upper bound on $\cE(G)$ alone. To make some progress, we first show a bound on the least eigenvalue for graphs with small surplus. It roughly says that if a graph has surplus close to $n$, then its least eigenvalue is at most $n^{2/3}$. We are not sure if the exponent $\frac{2}{3}$ is tight, but it cannot be improved beyond $\frac{1}{2}$, as shown by removing a $K_{\sqrt{n}}$ from $K_n$. Optimizing this lemma might be an interesting starting point for improving \cref{thm:discrepancy}.
\begin{lemma}
    \label{lem:least-pointwise-surplus}
  For any graph $G$ on $n$ vertices, we have
  $$-\lambda_n \leq (2n \surp^*(G))^{1/3}.$$
\end{lemma}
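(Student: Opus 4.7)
The plan is to construct an explicit feasible matrix for the semidefinite program defining $\surp^*(G)$ whose objective value captures $-\lambda_n$ efficiently. The naive test matrix $M = \bv_n \bv_n^\top$ is already feasible, since $M_{ii} = \bv_{n,i}^2 \leq \|\bv_n\|_2^2 = 1$, and it yields only $\surp^*(G) \geq (-\lambda_n)/2$. This is weaker than the target $(2n\surp^*(G))^{1/3}$ precisely when $\surp^*(G) \geq \sqrt{n}/2$, so in that regime one needs to scale $M$ up by as much as the constraint $M_{ii} \leq 1$ will allow.

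Concretely, I would use the test matrix
\begin{equation*}
M \;=\; \frac{1}{\|\bv_n\|_\infty^2}\, \bv_n \bv_n^\top,
\end{equation*}
the largest positive multiple of $\bv_n \bv_n^\top$ whose diagonal entries remain in $[0,1]$; it is automatically PSD. Since $A_G$ has zero diagonal, we obtain
\begin{equation*}
\surp^*(G) \;\geq\; \tfrac{1}{2}\langle -A_G, M\rangle \;=\; \frac{\bv_n^\top (-A_G) \bv_n}{2 \|\bv_n\|_\infty^2} \;=\; \frac{-\lambda_n}{2\|\bv_n\|_\infty^2}.
\end{equation*}

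It remains to control $\|\bv_n\|_\infty$ from above. From the eigenvalue equation $\lambda_n \bv_{n,i} = \sum_{j \sim i} \bv_{n,j}$, Cauchy--Schwarz, and $\|\bv_n\|_2 = 1$, we get $|\lambda_n|\cdot |\bv_{n,i}| \leq \sqrt{\deg(i)} \leq \sqrt{n}$ for every vertex $i$, so $\|\bv_n\|_\infty^2 \leq n/\lambda_n^2$. Substituting this into the previous display yields $(-\lambda_n)^3 \leq 2n\,\surp^*(G)$, which is the desired bound. The proof is essentially a one-page calculation once one identifies the optimal scaling of $M$; the only mild subtlety, and the only place the loopless hypothesis enters, is the vanishing of the diagonal contribution $\langle -A_G, \mathrm{diag}(\cdot)\rangle$ that would otherwise appear. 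I do not foresee a real obstacle.
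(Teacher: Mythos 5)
Your proof is correct, and it takes a genuinely different route from the paper. The paper's test matrix is
$$M = \frac{1}{n}\sum_{j \notin L_0}\lambda_j^2\,\bv_j\bv_j^\top,$$
whose diagonal entries are shown to be at most $1$ via Parseval's identity applied to the row $\bw_k$ of $A_G$; this yields $\surp^*(G) \geq \frac{1}{2n}\sum_{j\notin L_0}(-\lambda_j)^3$, from which the claimed bound follows by discarding all terms except $j=n$. You instead use the rank-one matrix $M = \|\bv_n\|_\infty^{-2}\,\bv_n\bv_n^\top$, which trivially satisfies the diagonal constraint, and then bound $\|\bv_n\|_\infty$ via the eigenvalue equation and Cauchy--Schwarz — this is exactly the ``flatness property'' the paper proves later (in Lemma~\ref{lem:effect-of-truncation}). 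Both arguments are essentially one-step SDP dual bounds and use the same degree estimate $\sqrt{\deg(i)}\leq\sqrt{n}$; yours is a bit more direct and self-contained, while the paper's is a hair stronger because it retains the full sum $\sum_{j\notin L_0}(-\lambda_j)^3$ rather than its largest term, which could be useful if one wanted to exploit multiple negative eigenvalues. One thing worth adding for completeness: you should note that $\lambda_n \leq 0$ (since $\mathrm{tr}(A_G)=0$), so $-\lambda_n \geq 0$ and the cube root is unambiguous; when $\lambda_n = 0$ the statement is trivial, so you may assume $\lambda_n < 0$ and hence $\bv_n \neq 0$.
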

\begin{proof}
  We consider the test matrix (an analogous test matrix appears in \cite{RST2023} as well)
    $$M = \frac{1}{n}\sum_{j \notin L_0} \lambda_j^2 \bv_j \bv_j^\top.$$
    For each vertex $k \in V$, we have
    $$M_{kk} = \frac{1}{n}\sum_{j \notin L_0} (\lambda_j \bv_{jk})^2 = \frac{1}{n}\sum_{j \notin L_0} \langle \bv_j, \bw_k \rangle^2$$
    where $\bw_k$ is the indicator vector of the neighbor of $k$. As $\{\bv_j\}_{j = 1}^n$ form an orthonormal basis, by Parseval's identity, we have
    $$M_{kk} \leq \frac{1}{n} \norm{\bw_k}_2^2 \leq 1.$$
    Therefore, we have
    $$\surp^*(G) \geq \frac{1}{2}\langle -A_G, M \rangle = \frac{1}{2n} \sum_{j \notin L_0} (-\lambda_j)^3.$$
    As each term on the right--hand side is non--negative, we discard all but one term to obtain
    $$(-\lambda_n)^3 \leq 2n \surp^*(G)$$
    as desired.
\end{proof}
This bound is non--trivial but substantially weaker than what we had in \cref{lem:recursive}. Using this bound, we prove the following weaker upper bound on \eqref{eq:strange-sum} in terms of the surplus.
\begin{lemma}
    \label{lem:key-bound-for-disc}
    For any graph $G$ on $n$ vertices, we have
    $$\sum_{i \in L_0, j\notin L_0} \lambda_i (-\lambda_j) \langle \bv_i \circ \bv_j, q\rangle^2 \leq n^{1/3} (2\surp^*(G))^{4/3} \norm{q}_{\infty}^2.$$
\end{lemma}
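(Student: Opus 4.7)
The plan is to rewrite the left--hand side as a quadratic form in the Schur (entrywise) product of the positive and negative spectral parts of $A_G$, and then reduce the estimate to the two previously established bounds on $-\lambda_n$ and $\cE(G)$ in terms of $\surp^*(G)$ (namely \cref{lem:surplus-energy} and \cref{lem:least-pointwise-surplus}). Set
$$P_+ = \sum_{i \in L_0} \lambda_i \bv_i \bv_i^\top, \qquad P_- = \sum_{j \notin L_0} (-\lambda_j) \bv_j \bv_j^\top.$$
Both are positive semidefinite and $A_G = P_+ - P_-$. Expanding $\langle \bv_i \circ \bv_j, q\rangle^2 = \sum_{k,k'} q_k q_{k'} \bv_{ik}\bv_{jk}\bv_{ik'}\bv_{jk'}$ and interchanging the order of summation rewrites the left--hand side as $\sum_{k,k'} q_k q_{k'} (P_+)_{kk'} (P_-)_{kk'} = q^\top (P_+ \circ P_-) q$.

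Next, I would translate this Schur--product expression into a trace. Letting $D_q$ be the diagonal matrix with entries $q$, a direct computation (using symmetry of $P_-$) gives $q^\top (P_+ \circ P_-) q = \mathrm{Tr}(D_q P_+ D_q \cdot P_-)$. Since both $D_q P_+ D_q$ and $P_-$ are positive semidefinite, the elementary inequality $\mathrm{Tr}(AB) \leq \norm{B}_{op} \mathrm{Tr}(A)$ for psd $A, B$ applies. Combined with $\norm{P_-}_{op} = -\lambda_n$ and $\mathrm{Tr}(D_q P_+ D_q) = \sum_k q_k^2 (P_+)_{kk} \leq \norm{q}_\infty^2 \mathrm{Tr}(P_+) = \tfrac{1}{2} \norm{q}_\infty^2 \cE(G)$, this yields the intermediate estimate
$$q^\top (P_+ \circ P_-) q \leq \tfrac{1}{2} (-\lambda_n) \cE(G) \norm{q}_\infty^2.$$
Substituting $\cE(G) \leq 4 \surp^*(G)$ from \cref{lem:surplus-energy} and $-\lambda_n \leq (2n \surp^*(G))^{1/3}$ from \cref{lem:least-pointwise-surplus}, and chasing constants ($2 \cdot 2^{1/3} = 2^{4/3}$), gives exactly the claimed bound $n^{1/3}(2\surp^*(G))^{4/3} \norm{q}_\infty^2$.

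The main obstacle, as the text itself flags just before the statement, is the middle manipulation. The direct Parseval--based argument used in \cref{lem:recursive} would only give a bound proportional to $(-\lambda_n)^2 \norm{q}_2^2$, and feeding the only available pointwise surplus bound $-\lambda_n \leq (2n\surp^*)^{1/3}$ into both factors of $-\lambda_n$ loses too much. The Schur--product/trace inequality is what lets us cleanly split those two factors: one factor of $-\lambda_n$ stays as $\norm{P_-}_{op}$, while the other is converted into $\mathrm{Tr}(P_+) = \tfrac{1}{2}\cE(G)$, which we can control via the much stronger energy bound of \cref{lem:surplus-energy}. The cost we pay is replacing $\norm{q}_2^2$ by $\norm{q}_\infty^2$, which will presumably be absorbed later by choosing the test vector $q$ in the subsequent recursion more carefully than in the proof of \cref{lem:recursive}.
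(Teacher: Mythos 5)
Your proof is correct and is essentially the paper's proof repackaged in trace-inequality language: both arrive at the same intermediate estimate $\tfrac{1}{2}(-\lambda_n)\,\cE(G)\,\norm{q}_\infty^2$ (the paper by pulling out $-\lambda_n$, applying Parseval in $j$, and bounding $\norm{\bv_i\circ q}_2^2\le\norm{q}_\infty^2$; you by writing the sum as $\mathrm{Tr}(D_q P_+ D_q\, P_-)$ and invoking $\mathrm{Tr}(AB)\le\norm{B}_{op}\mathrm{Tr}(A)$, which unwinds to the identical computation since the paper's $\sum_{i\in L_0}\lambda_i\norm{\bv_i\circ q}_2^2$ is exactly $\mathrm{Tr}(D_q P_+ D_q)$), and then both apply \cref{lem:least-pointwise-surplus} and \cref{lem:surplus-energy} in the same way. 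Your closing observation that the passage from $\norm{q}_2$ to $\norm{q}_\infty$ must be paid for by choosing $q$ more carefully correctly anticipates the clipping device in \cref{lem:effect-of-truncation}.
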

\begin{proof} 
As in the proof of \cref{lem:recursive}, we have
    $$\sum_{i \in L_0, j\notin L_0} \lambda_i (-\lambda_j) \langle \bv_i \circ \bv_j, q\rangle^2 \leq -\lambda_{n} \sum_{i \in L_0} \lambda_i \norm{\bv_i \circ q}_2^2 \leq n^{1/3}(2\surp^*(G))^{1/3} \sum_{i \in L_0} \lambda_i \norm{\bv_i \circ q}_2^2.$$
    Observing that
    $$\norm{\bv_i \circ q}_2^2  \leq \norm{q}_\infty^2 \norm{\bv_i}_2^2 = \norm{q}_\infty^2$$
    we have
    $$\sum_{i \in L_0} \lambda_i \norm{\bv_i \circ q}_2^2 \leq \norm{q}_\infty^2 \sum_{i \in L_0} \lambda_i = \frac{1}{2}\norm{q}_\infty^2 \cdot \cE(G).$$
    Applying \cref{lem:surplus-energy} gives
    $$\sum_{i \in L_0} \lambda_i \norm{\bv_i \circ q}_2^2 \leq 2\surp^*(G)\norm{q}_\infty^2$$
    so we have
    $$\sum_{i \in L_0, j\notin L_0} \lambda_i (-\lambda_j) \langle \bv_i \circ \bv_j, q\rangle^2 \leq n^{1/3} (2\surp^*(G))^{4/3} \norm{q}_{\infty}^2$$
    as desired.
\end{proof}
In order to apply \cref{lem:key-bound-for-disc}, we need an upper bound on $\norm{q}_\infty$. A tight upper bound is not available if we simply choose $q \sim N(0, I_W)$ as in the proof of \cref{lem:recursive}. To guarantee this bound, we clip $q$.
\begin{definition}
  For a vector $q \in \RR^n$ and some \emph{clipping factor} $\beta > 1$, define the \emph{clipping} of $q$, denoted $\mathsf{T}_{\beta} q$, as follows. Set $x = \frac{1}{\sqrt{n}} \norm{q}_2$. For each $i \in [n]$, define the $i$-th coordinate of $\mathsf{T}_{\beta} q$ as
  $$(\mathsf{T}_{\beta} q)_i = \begin{cases}
      \beta x, q_i > \beta x \\
      q_i, q_i \in [-\beta x, \beta x] \\
      -\beta x, q_i < -\beta x
  \end{cases}.$$
\end{definition}
Clearly, we have $\norm{\mathsf{T}_{\beta} q}_2 \leq \norm{q}_2$ and $\norm{q - \mathsf{T}_{\beta} q}_2 \leq \norm{q}_2$. The next lemma shows that clipping $q$ changes the quantities of interest in the proof of \cref{lem:recursive} by a small factor.
\begin{lemma}
\label{lem:effect-of-truncation}
Let $G$ be a graph on $n$ vertices, let $T \in [0, n]$, and let $W$ be the vector space spanned by $\{\bv_i \circ \bv_j: i, j \in L_T\}$. Let $q \sim N(0, I_W)$ be as in the proof of \cref{lem:recursive}. Take the clipping factor
$$\beta = \frac{2n^4}{T^4}.$$
1) For any $i, j \in L_T$, we have
$$\EE \langle \bv_i \circ \bv_j, \mathsf{T}_{\beta} q \rangle^2 \geq \left( \norm{\bv_i \circ \bv_j}_2 - \frac{1}{2\sqrt{n}}\right)_+^2$$
where $x_+^2$ is defined as $\max(x, 0)^2$.

2) For any $i \in L_{\frac{T^2}{8n}}$, we have 
$$\EE \langle \bv_i, \mathsf{T}_{\beta} q \rangle^2 \leq 25.$$
\end{lemma}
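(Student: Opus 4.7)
The plan is to reduce both parts to a single deterministic pointwise estimate on the clipping error. For any vector $\bu \in \RR^n$ I will show
$$\abs{\langle \bu, q - \mathsf{T}_{\beta} q \rangle} \leq \frac{\sqrt{n}}{\beta} \norm{\bu}_\infty \norm{q}_2.$$
The argument is elementary: $(q - \mathsf{T}_\beta q)_k$ vanishes unless $\abs{q_k} > \beta x = \beta \norm{q}_2/\sqrt{n}$, which forces $q_k^2 > \beta^2 \norm{q}_2^2/n$. Since $\sum_k q_k^2 = \norm{q}_2^2$, at most $n/\beta^2$ coordinates can be clipped. Applying Cauchy--Schwarz over the clipped set then gives $\sum_{k:\,\abs{q_k} > \beta x} \abs{q_k} \leq \sqrt{n}\norm{q}_2/\beta$, and bounding each $\abs{\bu_k}$ by $\norm{\bu}_\infty$ together with $\abs{(q-\mathsf{T}_\beta q)_k}\leq \abs{q_k}$ yields the claim.

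Squaring and taking expectations, using $\EE \norm{q}_2^2 = \dim W$ and $\dim W \leq \abs{L_T}^2 \leq n^4/T^4$ (the latter from $\abs{L_T}\, T^2 \leq \sum_i \lambda_i^2 \leq n^2$), produces
$$\EE \langle \bu, q - \mathsf{T}_\beta q \rangle^2 \leq \frac{n \dim W}{\beta^2} \norm{\bu}_\infty^2 \leq \frac{T^4}{4 n^3} \norm{\bu}_\infty^2.$$
The value $\beta = 2 n^4/T^4$ has been calibrated precisely so that this bound pairs cleanly with the standard eigenvector $L^\infty$ estimate $\norm{\bv_i}_\infty \leq \sqrt{n}/\lambda_i$ (proved by evaluating $A_G \bv_i = \lambda_i \bv_i$ coordinatewise and applying Cauchy--Schwarz).

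Part 1 then follows by taking $\bu = \bv_i \circ \bv_j$ for $i, j \in L_T$: the eigenvector bound gives $\norm{\bv_i \circ \bv_j}_\infty \leq n/T^2$, so $\EE \langle \bv_i \circ \bv_j, q - \mathsf{T}_\beta q\rangle^2 \leq 1/(4n)$. Since $\bv_i \circ \bv_j \in W$, $\EE \langle \bv_i \circ \bv_j, q\rangle^2 = \norm{\bv_i \circ \bv_j}_2^2$, and the reverse triangle inequality in $L^2(\Omega)$ yields $\sqrt{\EE \langle \bv_i \circ \bv_j, \mathsf{T}_\beta q\rangle^2} \geq \norm{\bv_i \circ \bv_j}_2 - 1/(2\sqrt{n})$; squaring (with the $(\cdot)_+^2$ absorbing the case where the right-hand side is negative) gives Part 1. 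For Part 2, taking $\bu = \bv_i$ with $i \in L_{T^2/(8n)}$ gives $\norm{\bv_i}_\infty \leq 8 n^{3/2}/T^2$, hence $\EE \langle \bv_i, q - \mathsf{T}_\beta q\rangle^2 \leq 16$. Combined with $\EE \langle \bv_i, q\rangle^2 = \norm{\Pi_W \bv_i}_2^2 \leq 1$, the forward triangle inequality in $L^2(\Omega)$ gives $\sqrt{\EE \langle \bv_i, \mathsf{T}_\beta q\rangle^2} \leq 1 + 4 = 5$, which is Part 2. The main hurdle is identifying the correct pointwise bound and calibrating $\beta$ so that the ``$n/\beta^2$ clipped coordinates'' combinatorics, the dimension bound on $W$, and the eigenvector $L^\infty$ bounds all conspire to produce an error of magnitude $1/\sqrt{n}$; once this is in place, both parts fall out of careful triangle inequality applications.
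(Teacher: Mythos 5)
Your proposal is correct and follows essentially the same approach as the paper: establish the eigenvector flatness bound $\|\bv_i\|_\infty \le \sqrt{n}/|\lambda_i|$, bound the number of clipped coordinates by $n/\beta^2$, use Cauchy--Schwarz to control the clipping error, and then apply the $L^2(\Omega)$ triangle inequality in the reverse direction for Part 1 and the forward direction for Part 2. The only difference is cosmetic---you package the clipping-error estimate as a single pointwise bound $\EE\langle\bu, q-\mathsf{T}_\beta q\rangle^2 \le \frac{T^4}{4n^3}\|\bu\|_\infty^2$ valid for all $\bu$ and specialize it twice, whereas the paper runs the identical computation inline in each part.
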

\begin{proof}
    Both parts of the lemma crucially depends on the observation that eigenvectors of large eigenvalues are flat. More precisely, we have

    \emph{Flatness Property: }For any $i$, we have $\norm{\bv_i}_\infty \leq \frac{\sqrt{n}}{\abs{\lambda_i}}$.

    \emph{Proof: }For each coordinate $k \in [n]$, we have
    $$\abs{\bv_{ik}} = \frac{1}{\abs{\lambda_i}} \abs{\sum_{\ell: k\ell \in E} \bv_{i\ell}}.$$
    By the Cauchy-Schwarz inequality, we have
    $$\abs{\sum_{\ell: k\ell \in E} \bv_{i\ell}} \leq \sqrt{n} \norm{\bv_i}_2 = \sqrt{n}$$
    from which we conclude the desired property. $\qed$
    
    We now prove part 1). By the triangle inequality, we have
    $$\left(\EE \langle \bv_i \circ \bv_j, \mathsf{T}_{\beta} q \rangle^2\right)^{1/2} \geq \left(\EE \langle \bv_i \circ \bv_j, q \rangle^2\right)^{1/2} - \left(\EE \langle \bv_i \circ \bv_j, q - \mathsf{T}_{\beta} q \rangle^2\right)^{1/2}.$$
    By \cref{prop:Gaussian-basic}, we have
    $$\EE \langle \bv_i \circ \bv_j, q \rangle^2 = \norm{\bv_i \circ \bv_j}_2^2.$$
    On the other hand, let $H$ be the set of indices $i$ with $q_i \neq (\mathsf{T}_{\beta} q)_i$. For each $i \in H$, we have
    $$\abs{q_i}^2 \geq \beta^2 x^2 = \frac{\beta^2}{n} \norm{q}_2^2.$$
    Therefore, almost surely we have
    $$\abs{H} \leq \frac{\norm{q}_2^2}{\frac{\beta^2}{n} \norm{q}_2^2} = \frac{n}{\beta^2}.$$
    Applying the flatness property to $\bv_i$ and $\bv_j$, almost surely we have
    $$\abs{\langle \bv_i \circ \bv_j, q - \mathsf{T}_{\beta} q \rangle} = \sum_{k \in H} \abs{\bv_{ik} \bv_{jk} (q - \mathsf{T}_{\beta} q )_k} \leq \frac{n}{T^2} \sum_{k \in H} \abs{(q - \mathsf{T}_{\beta} q )_k} \leq \frac{n}{T^2} |H|^{1/2} \norm{q - \mathsf{T}_{\beta} q}_2 \leq \frac{n^{3/2}}{T^2 \beta} \norm{q - \mathsf{T}_{\beta} q}_2.$$
    Thus we obtain
    $$\EE \langle \bv_i \circ \bv_j, q - \mathsf{T}_{\beta} q \rangle^2 \leq \frac{n^3}{T^4 \beta^2} \EE \norm{q - \mathsf{T}_{\beta} q}_2^2 \leq \frac{n^3}{T^4 \beta^2} \EE \norm{q}_2^2 = \frac{n^3}{T^4 \beta^2} \dim W.$$
    Finally, we have
    $$|L_T| \leq \frac{1}{T^2} \sum_{i = 1}^n \lambda_i^2 \leq \frac{n^2}{T^2}$$
    so
    $$\dim W \leq |L_T|^2 \leq \frac{n^4}{T^4}.$$
    Thus we conclude that
    $$\EE \langle \bv_i \circ \bv_j, q - \mathsf{T}_{\beta} q \rangle^2 \leq \frac{n^7}{T^8 \beta^2}.$$
    By our choice of $\beta = \frac{2n^4}{T^4}$, we obtain
    $$\EE \langle \bv_i \circ \bv_j, q - \mathsf{T}_{\beta} q \rangle^2 \leq \frac{1}{4n}$$
    so we have
    $$\left(\EE \langle \bv_i \circ \bv_j, \mathsf{T}_{\beta} q \rangle^2\right)^{1/2} \geq \left(\EE \langle \bv_i \circ \bv_j, q \rangle^2\right)^{1/2} - \left(\EE \langle \bv_i \circ \bv_j, q - \mathsf{T}_{\beta} q \rangle^2\right)^{1/2} \geq \norm{\bv_i \circ \bv_j}_2 - \frac{1}{2  \sqrt{n}}$$
    as desired. This completes the proof of part 1). 

    The proof of part 2) is similar. By the triangle inequality, we have
    $$\left(\EE \langle \bv_i, \mathsf{T}_{\beta} q \rangle^2\right)^{1/2} \leq \left(\EE \langle \bv_i, q \rangle^2\right)^{1/2} + \left(\EE \langle \bv_i, q - \mathsf{T}_{\beta} q \rangle^2\right)^{1/2}.$$
    By \cref{prop:Gaussian-basic}, we have
    $$\EE \langle \bv_i, q \rangle^2 \leq \norm{\bv_i}_2^2 = 1.$$
    On the other hand, as $i \in L_{\frac{T^2}{8n}}$, we have $\lambda_i \geq \frac{T^2}{8n}$, so the flatness property implies that
    $$\abs{\langle \bv_i, q - \mathsf{T}_{\beta} q \rangle} \leq \sum_{k \in H} \abs{\bv_{ik}} \abs{(q - \mathsf{T}_{\beta} q)_k} \leq \frac{8n\sqrt{n}}{T^2} \cdot |H|^{1/2} \cdot \norm{q - \mathsf{T}_{\beta} q}_2 \leq \frac{8n^{2}}{T^2 \beta} \norm{q - \mathsf{T}_{\beta} q}_2.$$
    Taking the expectation, we have
    $$\EE \abs{\langle \bv_i, q - \mathsf{T}_{\beta} q \rangle}^2 \leq \frac{64n^4}{T^4 \beta^2} \EE \norm{q - \mathsf{T}_{\beta} q}_2^2 \leq \frac{64n^4}{T^4 \beta^2} \EE \norm{q}_2^2 = \frac{64n^4}{T^4 \beta^2} \dim W.$$
    Substituting the estimate $\dim W \leq \frac{n^4}{T^4}$, we obtain
    $$\EE \abs{\langle \bv_i, q - \mathsf{T}_{\beta} q \rangle}^2 \leq \frac{64n^8}{T^8 \beta^2}.$$
    Using our choice of $\beta = \frac{2n^4}{T^4}$, we conclude that
    $$\EE \abs{\langle \bv_i, q - \mathsf{T}_{\beta} q \rangle}^2 \leq 16.$$
    So we have
    $$\left(\EE \langle \bv_i, \mathsf{T}_{\beta} q \rangle^2\right)^{1/2} \leq 1 + 4 = 5$$
    giving part 2) as desired.
\end{proof}
We are now ready to prove an analog of \cref{lem:recursive} for graphs with small surplus.
\begin{lemma}
    \label{lem:recursive-II}
    Set $c = \frac{1}{99}$. Assume $G$ is a graph on $n \gg 1$ vertices with $\surp^*(G) \leq \frac{1}{2} n^{1 + c}$. Then for every threshold $T \geq n^{1 - 2c}$, we have
    $$S_T^2 \leq 250 n S_{\frac{T^2}{8n}}.$$
\end{lemma}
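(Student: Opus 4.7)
The plan is to mirror the proof of Lemma~\ref{lem:recursive}: apply the identity $A_G = A_G \circ A_G$ to a test vector, but now use the clipped Gaussian $\mathsf{T}_\beta \q$ in place of $\q$, where $\q \sim N(0, I_W)$, $W = \mathrm{span}\{\bv_i \circ \bv_j : i, j \in L_T\}$, and $\beta = 2n^4/T^4$. The clipping is essential because, without a strong bound on $-\lambda_n$, the only tool we have for the cross term $\sum_{i \in L_0, j \notin L_0}\lambda_i(-\lambda_j)\langle \bv_i \circ \bv_j, \cdot\rangle^2$ is Lemma~\ref{lem:key-bound-for-disc}, which requires control on $\norm{\cdot}_\infty$; the clipping yields deterministically $\norm{\mathsf{T}_\beta \q}_\infty \leq \beta \norm{\q}_2/\sqrt{n}$. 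Meanwhile, Lemma~\ref{lem:effect-of-truncation} assures us that clipping only mildly perturbs the other relevant expectations.

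Taking expectations of both sides of
$$\sum_i \lambda_i \langle \bv_i, \mathsf{T}_\beta \q\rangle^2 = \sum_{i, j} \lambda_i \lambda_j \langle \bv_i \circ \bv_j, \mathsf{T}_\beta \q\rangle^2,$$
I would upper-bound the LHS by splitting at the threshold $T^2/(8n)$: the terms with $i \in L_{T^2/(8n)}$ contribute at most $25\, S_{T^2/(8n)}$ by Part~2 of Lemma~\ref{lem:effect-of-truncation}; the terms with $0 \leq \lambda_i < T^2/(8n)$ contribute at most $(T^2/(8n))\dim W \leq S_T^2/(8n)$ via Parseval and $\dim W \leq \abs{L_T}^2 \leq (S_T/T)^2$; the negative-$\lambda_i$ terms are discarded. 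So $\EE \mathrm{LHS} \leq 25\, S_{T^2/(8n)} + S_T^2/(8n)$.

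For the RHS, I would split as in the original proof:
$$\mathrm{RHS} \geq \sum_{i, j \in L_T} \lambda_i \lambda_j \langle \bv_i \circ \bv_j, \mathsf{T}_\beta \q\rangle^2 - 2 \sum_{i \in L_0, j \notin L_0} \lambda_i(-\lambda_j) \langle \bv_i \circ \bv_j, \mathsf{T}_\beta \q\rangle^2.$$
The cross term is controlled by Lemma~\ref{lem:key-bound-for-disc} together with $\EE \norm{\mathsf{T}_\beta \q}_\infty^2 \leq \beta^2 \dim W/n$: plugging in $\surp^*(G) \leq n^{1+c}/2$, $\beta = 2n^4/T^4$, $\dim W \leq S_T^2/T^2$, and $T \geq n^{1-2c}$, the overall exponent of $n$ multiplying $S_T^2$ comes out to $-4/3 + 64c/3$, which for $c = 1/99$ is approximately $-1.12$; hence for $n \gg 1$ the cross term is at most $S_T^2/(32n)$. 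For the main (positive--positive) sum, Part~1 of Lemma~\ref{lem:effect-of-truncation} gives $\EE \langle \bv_i \circ \bv_j, \mathsf{T}_\beta \q\rangle^2 \geq (\norm{\bv_i \circ \bv_j}_2 - 1/(2\sqrt{n}))_+^2$. Applying the elementary identity $(x-a)_+^2 \geq x^2/2 - a^2$ (valid for $x, a \geq 0$ by completing the square) and then invoking the computation $\sum_{i, j \in L_T} \lambda_i \lambda_j \norm{\bv_i \circ \bv_j}_2^2 = \sum_k (\sum_{i \in L_T} \lambda_i \bv_{ik}^2)^2 \geq S_T^2/n$ from the original proof yields the lower bound $S_T^2/(2n) - S_T^2/(4n) = S_T^2/(4n)$.

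Assembling these bounds via the identity gives $25\, S_{T^2/(8n)} + S_T^2/(8n) \geq S_T^2/(4n) - S_T^2/(16n)$, so $S_T^2 \leq 400\, n\, S_{T^2/(8n)}$; tightening the constant in Part~2 of Lemma~\ref{lem:effect-of-truncation} (or slightly adjusting the split threshold) should recover the claimed $250$. The main obstacle I foresee is the lower bound for the positive--positive sum: the correction $1/(2\sqrt{n})$ in Part~1 is of the same order as a typical $\norm{\bv_i \circ \bv_j}_2$, so the naive bound $(x-a)_+^2 \geq x^2 - 2ax$ followed by Cauchy--Schwarz collapses to $0$. The identity $(x-a)_+^2 \geq x^2/2 - a^2$ is exactly what is needed: it kills the linear-in-$x$ cross term at the price of an additive $S_T^2/(4n)$ loss, which the main $S_T^2/(2n)$ term absorbs.
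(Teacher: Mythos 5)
Your proposal follows essentially the same route as the paper's proof: the clipped Gaussian test vector with $\beta = 2n^4/T^4$, the same split of the identity $\sum_i \lambda_i\langle \bv_i, \mathsf{T}_\beta q\rangle^2 = \sum_{i,j}\lambda_i\lambda_j\langle \bv_i\circ\bv_j, \mathsf{T}_\beta q\rangle^2$, Lemma~\ref{lem:effect-of-truncation} for both sides, Lemma~\ref{lem:key-bound-for-disc} plus $\norm{\mathsf{T}_\beta q}_\infty \leq \beta\norm{q}_2/\sqrt n$ for the cross term, and the same AM--GM inequality $(x-a)_+^2 \geq x^2/2 - a^2$ (the paper also invokes it under that name) to handle the $1/(2\sqrt n)$ correction in the positive--positive sum. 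The constant gap (your $400$ vs.\ the claimed $250$) arises only because you bounded the cross term by $S_T^2/(16n)$; since the exponent $64c/3 - 1/3$ is strictly negative, for $n \gg 1$ the cross term is $o(S_T^2/n)$ and can be made smaller than, say, $0.01\,S_T^2/n$, which recovers $250$ exactly as in the paper.
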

\begin{proof}
Let $W$ be the subspace of $\RR^n$ spanned by the vectors $\{\bv_i \circ \bv_j: i, j \in S_T\}$. Let $q\sim N(0, I_W)$ be the standard Gaussian vector on $W$, and take the cutoff
$$\beta = \frac{2n^4}{T^4} \leq 2n^{8c}.$$
As in the proof of \cref{lem:recursive}, we start with the identity 
$$\sum_{i = 1}^n \lambda_i \langle \bv_i, \mathsf{T}_{\beta} q\rangle^2 = \sum_{i, j = 1}^n \lambda_i \lambda_j \langle \bv_i \circ \bv_j, \mathsf{T}_{\beta} q\rangle^2.$$
We estimate the expectation of both sides. For the left--hand side, by Parseval's identity we have
$$\sum_{i \notin L_{\frac{T^2}{8n}}}\lambda_i \langle \bv_i, \mathsf{T}_{\beta} q\rangle^2 \leq \frac{T^2}{8n} \sum_{i = 1}^n \langle \bv_i,  \mathsf{T}_{\beta} q\rangle^2 = \frac{T^2}{8n} \norm{\mathsf{T}_{\beta} q}_2^2 \leq \frac{T^2}{8n} \norm{q}_2^2.$$
Therefore, we have
$$\sum_{i = 1}^n \lambda_i \langle \bv_i, \mathsf{T}_{\beta} q\rangle^2 \leq \sum_{i \in L_{\frac{T^2}{8n}}} \lambda_i \langle \bv_i, \mathsf{T}_{\beta} q\rangle^2 + \frac{T^2}{8n} \norm{q}_2^2.$$
Taking expectation, we have
$$\EE \sum_{i = 1}^n \lambda_i \langle \bv_i, \mathsf{T}_{\beta} q\rangle^2 \leq \sum_{i \in L_{\frac{T^2}{8n}}} \lambda_i \EE \langle \bv_i, \mathsf{T}_{\beta} q\rangle^2 + \frac{T^2}{8n} \EE \norm{q}_2^2$$
In light of \cref{lem:effect-of-truncation} (2) and \cref{prop:Gaussian-basic}(1), we obtain
\begin{equation}
    \label{eq:LHS-II}
    \EE \sum_{i = 1}^n \lambda_i \langle \bv_i, q\rangle^2 \leq 25 S_{\frac{T^2}{8n}} + \frac{T^2}{8n} \dim W.
\end{equation}
We move on to the right--hand side. Dropping non-negative terms, we have
$$\sum_{i, j = 1}^n \lambda_i \lambda_j \langle \bv_i \circ \bv_j, \mathsf{T}_{\beta} q\rangle^2 \geq \sum_{i, j \in L_T} \lambda_i \lambda_j \langle \bv_i \circ \bv_j, \mathsf{T}_{\beta} q\rangle^2 - 2\sum_{i \in L_0, j\notin L_0} \lambda_i (-\lambda_j) \langle \bv_i \circ \bv_j, \mathsf{T}_{\beta} q\rangle^2.$$
By \cref{lem:effect-of-truncation} (1), we have
$$\EE \langle \bv_i \circ \bv_j, \mathsf{T}_{\beta} q\rangle^2 \geq \left( \norm{\bv_i \circ \bv_j}_2 - \frac{1}{2\sqrt{n}}\right)_+^2$$
hence
$$\EE \sum_{i, j \in L_T} \lambda_i \lambda_j \langle \bv_i \circ \bv_j, \mathsf{T}_{\beta}q\rangle^2 = \sum_{i, j \in L_T} \sum_{k = 1}^n \lambda_i \lambda_j \left( \norm{\bv_i \circ \bv_j}_2 - \frac{1}{2\sqrt{n}}\right)_+^2.$$
By the AM-GM inequality, we have
$$\left( \norm{\bv_i \circ \bv_j}_2 - \frac{1}{2\sqrt{n}}\right)_+^2 \geq \frac{1}{2} \norm{\bv_i \circ \bv_j}_2^2 - \frac{1}{4n}.$$
By the Cauchy-Schwarz inequality, we have
$$
\sum_{i, j \in L_T} \lambda_i \lambda_j \norm{\bv_i \circ \bv_j}_2^2 = \sum_{k = 1}^n \left(\sum_{i \in L_T} \lambda_i \bv_{ik}^2\right)^2 \geq \frac{1}{n} \left(\sum_{k = 1}^n \sum_{i \in L_T} \lambda_i \bv_{ik}^2\right)^2 = \frac{1}{n} S_T^2.
$$
Therefore, we conclude that
\begin{equation}
    \label{eq:RHS-1-II}
\EE \sum_{i, j \in L_T} \lambda_i \lambda_j \langle \bv_i \circ \bv_j, \mathsf{T}_{\beta}q\rangle^2 \geq \sum_{i, j \in L_T} \sum_{k = 1}^n \lambda_i \lambda_j \left( \frac{1}{2} \norm{\bv_i \circ \bv_j}_2^2 - \frac{1}{4n}\right) \geq \frac{1}{2n} S_T^2 - \frac{1}{4n} S_T^2 = \frac{1}{4n}S_T^2.
\end{equation}
Finally, by \cref{lem:key-bound-for-disc} we have
$$\sum_{i \in L_0, j\notin L_0} \lambda_i (-\lambda_j) \langle \bv_i \circ \bv_j, \mathsf{T}_{\beta} q\rangle^2 \leq n^{1/3} (2\surp^*(G))^{4/3} \norm{\mathsf{T}_{\beta} q}_{\infty}^2 \leq n^{5/3 + 4c/3} \norm{\mathsf{T}_{\beta} q}_{\infty}^2.$$
By the definition of the clipping operation, we have
$$\norm{\mathsf{T}_{\beta} q}_{\infty}^2 \leq \beta^2 \cdot \frac{1}{n} \norm{q}_2^2 \leq \beta^2 \cdot \frac{1}{n} \norm{q}_2^2.$$
By \cref{prop:Gaussian-basic} and $\beta \leq 2n^{8c}$, we obtain
\begin{equation}
\label{eq:RHS-2-II}
\EE \sum_{i \in L_0, j\notin L_0} \lambda_i (-\lambda_j) \langle \bv_i \circ \bv_j, q\rangle^2 \leq n^{2/3 + 4c/3} \beta^2 \EE \norm{q}_2^2 \leq 4n^{2/3 + 28c/3} \dim(W).
\end{equation}
Finally, combining \eqref{eq:LHS-II}, \eqref{eq:RHS-1-II} and \eqref{eq:RHS-2-II}, we obtain
$$25S_{\frac{T^2}{8n}} + \frac{T^2}{8n} \dim W \geq \frac{1}{4n}S_T^2 - 8n^{2/3 + 52c /3} \dim(W).$$
Recall that $\dim W \leq \abs{L_T}^2 \leq \frac{S_T^2}{T^2}$. Substituting into the above, we have
$$ 25nS_{\frac{T^2}{8n}} + \frac{1}{8} S_T^2 \geq \frac{1}{4} S_T^2 - \frac{8n^{2/3 + 52c/3}}{T^2} S_T^2.$$
Recalling the assumption that $T \geq n^{1 - 2c}$ and our choice $c = \frac{1}{99}$, we have
$$\frac{8n^{2/3 + 52c/3}}{T^2} \leq 8n^{64 c / 3 - 1/3} \leq 8n^{-0.01}  \leq 0.01.$$
Therefore, we conclude that
$$ 25nS_{\frac{T^2}{8n}} \geq \frac{1}{10} S_T^2$$
as desired.
\end{proof}
\section{\cref{thm:discrepancy}: Finishing the proof}
\label{sec:discrepancy-2}
Using the critical \cref{lem:recursive-II} in the preceding section, we can prove \cref{thm:discrepancy} by an argument analogous to how \cref{thm:main} follows from \cref{lem:recursive}.

As $\surp^*(G) = O(\surp(G) \cdot \log n)$, the following result implies \cref{thm:discrepancy}.
\begin{theorem}
    Let $c = \frac{1}{99}$. For any $\epsilon > 0$, the following holds for all $n \gg_{\epsilon} 1$. If $G$ is a graph on $n$ vertices that is $\epsilon$-far from a disjoint union of cliques, then we have
    $$\surp^*(G) \geq \frac{1}{2}n^{1 + c}.$$
\end{theorem}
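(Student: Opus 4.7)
The plan is to prove the contrapositive, mirroring the proof of Theorem~\ref{thm:main} but with Lemma~\ref{lem:recursive-II} replacing Lemma~\ref{lem:recursive}. So I assume that $\surp^*(G) \leq \frac{1}{2} n^{1+c}$ and aim to show that $G$ is $\epsilon$-close to a disjoint union of cliques, which will contradict the hypothesis.

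First I would extract three quantitative consequences of the assumption on $\surp^*(G)$. The energy bound of Lemma~\ref{lem:surplus-energy} gives $\mathcal{E}(G) \leq 4\surp^*(G) \leq 2n^{1+c}$. The pointwise bound of Lemma~\ref{lem:least-pointwise-surplus} gives $-\lambda_n \leq (2n\surp^*(G))^{1/3} \leq n^{(2+c)/3}$. And Lemma~\ref{lem:recursive-II} gives the recursion $S_T^2 \leq 250 n\, S_{T^2/(8n)}$ valid for every $T \geq n^{1-2c}$. These three facts exactly match the three hypotheses of Lemma~\ref{lem:solving-recursion-general} with parameters
\[
(p,q,r,C) = \left(\tfrac{2+c}{3},\; 1+c,\; 1-2c,\; 250\right).
\]
One checks the structural conditions: $p, r \in (0,1)$, $q \in (1,2)$, and $q + \max(p,r) = (1+c) + (1-2c) = 2-c < 2$. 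The exponent produced by the lemma is $s = \frac{q-1}{1-r} = \frac{c}{2c} = \frac{1}{2}$.

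Next I would choose the threshold carefully. Given $\epsilon > 0$, let $\delta = \delta(\epsilon)$ be the constant produced by Lemma~\ref{lem:spectrum-to-clique}, and set $\gamma = \bigl(\delta/(4 \cdot 250^{3/2})\bigr)^{2}$. With $H := \gamma n$, one has $H \geq n^{p+q-1}$ for $n \gg_{\epsilon} 1$ (since $p+q-1 = \tfrac{2+4c}{3} < 1$), so Lemma~\ref{lem:solving-recursion-general} is applicable and yields
\[
\sum_{i : \lambda_i < \gamma n} \lambda_i^2 \;\leq\; \frac{2 \cdot 250^{3/2}}{1/2}\, n^{3/2}(\gamma n)^{1/2} \;=\; 4 \cdot 250^{3/2}\, \gamma^{1/2}\, n^2 \;\leq\; \delta n^2,
\]
by our choice of $\gamma$.

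Now Lemma~\ref{lem:spectrum-to-clique} applies with this $\gamma$ and $\delta$, giving two alternatives: either $G$ is $\epsilon$-close to a disjoint union of cliques (which is what we want), or $\lambda_n \leq -c_{\epsilon,\gamma} n$ for a constant $c_{\epsilon,\gamma} > 0$ depending only on $\epsilon$ and $\gamma$ (hence only on $\epsilon$). But the second alternative is incompatible with the bound $-\lambda_n \leq n^{(2+c)/3}$ we already established, provided $n \gg_\epsilon 1$: the left side grows like $n$ while the right grows only like $n^{2/3 + 1/297}$. This rules out the second alternative and forces $G$ to be $\epsilon$-close to a disjoint union of cliques, contradicting the hypothesis and completing the proof.

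The main verification step, and the one I would write out most carefully, is the parameter-matching in applying Lemma~\ref{lem:solving-recursion-general}: one needs that the slightly degraded exponents coming from Lemma~\ref{lem:recursive-II} (namely $p = (2+c)/3$ from the weaker pointwise bound, and $r = 1-2c$ from the restricted range of validity of the recursion) still satisfy $q + \max(p,r) < 2$, since otherwise the key inductive integral $\int_0^H n\,(n/t)^s\, dt$ would fail to produce a power-saving bound. The choice $c = 1/99$ is comfortably small enough to ensure both $(2+c)/3 < 1-2c$ and $q+r < 2$, which is why it is compatible with the rest of the argument.
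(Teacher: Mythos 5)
Your proposal is correct and follows essentially the same route as the paper: combine Lemma~\ref{lem:surplus-energy}, Lemma~\ref{lem:least-pointwise-surplus}, and Lemma~\ref{lem:recursive-II} to feed Lemma~\ref{lem:solving-recursion-general} with $(p,q,r,C)=(\tfrac{2+c}{3},1+c,1-2c,250)$ and $s=\tfrac12$, then invoke Lemma~\ref{lem:spectrum-to-clique} and rule out the large negative eigenvalue alternative. The only difference from the paper is the cosmetic choice of the threshold constant $\gamma$.
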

\begin{proof}
    Suppose for the sake of contradiction that $\surp^*(G) < \frac{1}{2}n^{1 + c}$. Take $\delta = \delta(\epsilon)$ as in \cref{lem:spectrum-to-clique}.

    Trying to apply \cref{lem:solving-recursion-general}, we check that

    1) By \cref{lem:least-pointwise-surplus}, we have $-\lambda_n \leq (2n \surp^*(G))^{1/3} \leq n^{\frac{2 + c}{3}}$.

    2) By \cref{lem:surplus-energy}, we have
    $$\sum_{i} \abs{\lambda_i} = \cE(G) \leq 4 \surp^*(G) \leq 2n^{1 + c}.$$
    3) By \cref{lem:recursive-II}, for $T \geq n^{1 - 2c}$ we have
    $$S_T^2 \leq 250n S_{\frac{T^2}{8n}}.$$
    Hence, \cref{lem:solving-recursion-general} applies with
    $$(p,q,r,C) = \left(\frac{2 + c}{3}, 1 + c, 1-2c, 250\right)$$
    giving
    $$s = \frac{q - 1}{1 - r} = \frac{1}{2}.$$
    Set $H = 10^{-20}\delta^2 n$, and note that $H \geq n^{p + q - 1}$ given $n \gg_{\epsilon} 1$. Therefore, we obtain
    $$\sum_{i \notin L_H} \lambda_i^2 \leq \frac{2 \cdot (250)^{3/2}}{\frac{1}{2}} \cdot n^{3/2} H^{1/2} \leq \delta n^2.$$
    Hence, we can apply \cref{lem:spectrum-to-clique} with $\gamma = 10^{-20}\delta^2$. Thus, one of the following must hold.
    \begin{enumerate}
        \item $G$ is $\epsilon$-close to a disjoint union of cliques.
        \item The least eigenvalue of $G$ is at most $-c_{\epsilon, \gamma} \cdot n$, where $c_{\epsilon, \gamma} > 0$ is a constant depending only on $\epsilon$ and $\gamma$.
    \end{enumerate}
    However, we just showed that $-\lambda_{n} \leq n^{1 - 4c}$, so the latter does not hold for $n \gg_\epsilon 1$. Therefore, $G$ is $\epsilon$-close to a disjoint union of cliques, contradiction.
\end{proof}
\section{\cref{thm:polynomial}: A fast density increment}
\label{sec:ABKS?}
In this section, we prove \cref{thm:polynomial} and \cref{cor:ABKS?}. Our arguments involve passing to induced subgraphs. Therefore, we will repeatedly use the following basic yet important property of surplus. 
\begin{proposition}
    \label{prop:surplus--monotone}
    If $H$ is an induced subgraph of $G$, then we have
    $$\surp(G) \geq \surp(H), \surp^*(G) \geq \surp^*(H).$$
\end{proposition}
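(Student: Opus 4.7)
The plan is to handle the two inequalities separately, since $\surp$ and $\surp^*$ have very different definitions, but both proofs rely on the same intuition: an optimal ``witness'' for $H$ (a cut, or a PSD matrix) can be extended to a witness for $G$ without losing any surplus, because the extra edges incident to $V(G) \setminus V(H)$ are cut in expectation with probability $1/2$, which exactly cancels the $m/2$ baseline used to define surplus.

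For the combinatorial inequality $\surp(G) \geq \surp(H)$, I would fix an optimal cut $(A, B)$ of $H$, meaning a bipartition of $V(H)$ achieving $\mc(H)$. Extend this to a cut of $G$ by assigning each vertex in $V(G) \setminus V(H)$ independently to $A$ or $B$ with probability $1/2$. Write $m = e(G)$ and $m_H = e(H)$. Edges with both endpoints in $V(H)$ contribute exactly $\mc(H)$ to the cut (deterministically); each of the remaining $m - m_H$ edges has at least one endpoint randomized, so it is cut with probability exactly $1/2$. Thus the expected cut size is $\mc(H) + (m - m_H)/2$, and some realization achieves at least this, giving
\[
\mc(G) \;\geq\; \mc(H) + \tfrac{m - m_H}{2}, \qquad\text{i.e.,}\qquad \surp(G) \;\geq\; \surp(H).
\]

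For the semidefinite inequality $\surp^*(G) \geq \surp^*(H)$, I would take an optimizer $M_H$ of the program defining $\surp^*(H)$: a positive semidefinite matrix indexed by $V(H)$ with diagonal entries at most $1$ and $\frac{1}{2}\langle -A_H, M_H \rangle = \surp^*(H)$. Extend $M_H$ to a matrix $M_G$ indexed by $V(G)$ by placing $M_H$ in the $V(H) \times V(H)$ block and filling the rest with zeros. This zero-padding preserves positive semidefiniteness, and the diagonal entries of $M_G$ are $\leq 1$ (being either an entry of $M_H$ or $0$). Because $H$ is an \emph{induced} subgraph of $G$, the entries of $A_G$ and $A_H$ agree on $V(H) \times V(H)$. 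Hence $\langle -A_G, M_G\rangle = \langle -A_H, M_H\rangle$, and $M_G$ is a feasible point witnessing $\surp^*(G) \geq \frac{1}{2}\langle -A_G, M_G\rangle = \surp^*(H)$.

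There is no real obstacle here: the key observation is that ``induced'' is exactly what allows the adjacency matrices to coincide on the relevant block, and that both definitions of surplus already subtract the $m/2$ baseline so extra edges outside $H$ contribute nothing in expectation (combinatorial side) or nothing at all (SDP side, thanks to the zero padding). A tiny care is needed to note that the SDP definition uses $M \succ 0$ to include the case $M \succeq 0$ (otherwise a strict limiting argument is required), but with the standard convention this is immediate.
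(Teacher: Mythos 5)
Your proof is correct. For the SDP inequality you use exactly the paper's zero-padding argument; for the combinatorial inequality the paper simply cites \cite[Lemma 2.5]{RT2024}, whereas you supply the standard self-contained random-extension proof that underlies that citation, so the substance is the same.
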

\begin{proof}
    For the former inequality, see e.g. \cite[Lemma 2.5]{RT2024}. For the latter inequality, recall that
    $$\surp^*(G) := \sup_{\substack{M \succ 0  \\ M_{ii} \leq 1, \forall i \in V}} \frac{1}{2} \langle -A_G, M \rangle.$$
    For any feasible solution $M$ to the SDP for $H$, let $M'$ be the matrix with rows and columns indexed by $V(G)$, and entries defined by
    $$M'_{ij} = \begin{cases}
        M_{ij}, i, j \in V(H) \\
        0, \text{otherwise}
    \end{cases}.$$
    Then $M'$ is a feasible solution to the SDP for $G$, and $\langle -A_H, M \rangle = \langle -A_G, M' \rangle$. So we conclude that $\surp^*(G) \geq \surp^*(H)$, as desired.
\end{proof}

Before we dive the proof of \cref{thm:polynomial}, we first show how \cref{cor:ABKS?} follows from \cref{thm:polynomial}.
\begin{proof}[Proof of \cref{cor:ABKS?} given \cref{thm:polynomial}]
    Suppose for the sake of contradiction that $\surp(G) <  m^{\frac{1}{2} + \frac{\rho}{10}}$. Set $d = \frac{1}{2} m^{\frac{1}{2} - \frac{\rho}{10}} - 1$. We divide into two cases.

    \textbf{Case 1:} Suppose $G$ is $d$-degenerate. Using the folklore fact that
    $$\surp(G) \geq \frac{m}{2\chi} \geq \frac{m}{2(d + 1)}$$
    we have
    $$\surp(G) \geq \frac{m}{2(d + 1)} = m^{\frac{1}{2} + \frac{\rho}{10}}$$
    contradiction.
    
    \textbf{Case 2:} Suppose $G$ is not $d$-degenerate. Then $G$ contains an induced subgraph $G'$ with minimum degree at least $d$. Let $n$ be the number of vertices in $G'$. Then we have
    $$d \leq n \leq \frac{2m}{d} \leq d^{1 + \frac{\rho}{4}} .$$
    Therefore, $G'$ has at least $\frac{1}{2} nd \geq n^{2 - \rho}$ edges, and we have
    $$\surp(G') \leq \surp(G) < m^{\frac{1}{2} + \frac{\rho}{10}} \leq d^{1 + \rho} \leq n^{1 + \rho}.$$
    Hence, for $m \gg_r 1$, \cref{thm:polynomial} shows that $G'$ must contain an induced subgraph $H$ on at least $n^{1 - 4\rho}$ vertices with edge density at least $1 - \frac{1}{r}$. By Tur\'{a}n's theorem $H$ must contain a copy of $K_r$, contradiction.
\end{proof}
The rest of this section is dedicated to the proof of \cref{thm:polynomial}. Recall that the edge density of a graph $G$ is defined as
$$p(G) = \frac{2m}{n^2}$$
where $m$ is the number of edges and $n$ is the number of vertices of $G$. The next lemma is the key density increment result that allows us to finish the proof. Density increment arguments are extensively used in combinatorics, for example in the proofs of Roth's theorem and Szemer\'{e}di's regularity lemma. In the study of surplus, density increment has led to new bounds for the log--rank conjecture \cite{SudakovTomon24}. The main challenge in our setting is to obtain polynomial--type bounds, requiring a fast density increment made possible by spectral information revealed by the recursive inequality. We will use this lemma to find a large induced subgraph of $G$ with density $\Omega(1)$, then apply \cref{thm:discrepancy} to boost the density to $1 - \epsilon$ as desired.
\begin{lemma}
    \label{lem:density-increment}
    Let $\gamma = \frac{1}{200}$. Let $G_0$ be a graph on $n_0 \gg 1$ vertices with edge density at least $n_0^{-\gamma}$, such that 
    $$\surp^*(G_0) \leq \frac{1}{2} n_0^{1 + \gamma}.$$
    Then one of the following holds.

    1) $G_0$ contains an induced subgraph $G_1$ on $n' \geq \frac{n_0}{8}$ vertices with $p(G_1) \geq 2 p(G_0)$.

    2) $G_0$ contains an induced subgraph $G_1$ on $n' \geq p(G_0) n_0$ vertices with $p(G_1) \geq 10^{-10} \sqrt{p(G_0)}$.
\end{lemma}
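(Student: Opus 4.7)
The plan is to run the spectral concentration machinery from Sections~\ref{sec:lem1}--\ref{sec:discrepancy-2} on $G_0$ and then translate the resulting spectral structure into one of the two alternatives, mirroring the high-level strategy of Theorem~\ref{thm:discrepancy} while carefully tracking the edge density $p := p(G_0)$, which may be as small as $n_0^{-\gamma}$. Since $\gamma = 1/200 < 1/99$, the bound $\surp^*(G_0) \le \tfrac{1}{2} n_0^{1+\gamma}$ lies inside the range of Lemma~\ref{lem:recursive-II}, so the recursion $S_T^2 \le 250 n_0 S_{T^2/(8n_0)}$ holds for $T \ge n_0^{1 - 2/99}$; Lemma~\ref{lem:least-pointwise-surplus} gives $-\lambda_n \le n_0^{(2+\gamma)/3}$ and Lemma~\ref{lem:surplus-energy} gives $\cE(G_0) \le 2 n_0^{1+\gamma}$. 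Feeding all of this into Lemma~\ref{lem:solving-recursion-general} with $(p_\ast, q_\ast, r_\ast, C) = ((2+\gamma)/3,\, 1+\gamma,\, 1-2/99,\, 250)$ and exponent $s = 99\gamma/2 \ll 1$, and choosing a threshold $H$ slightly below $p\, n_0$ in the allowed range, I obtain $\sum_{i \notin L_H} \lambda_i^2 \le \eta\, p\, n_0^2$ for an arbitrarily small absolute constant $\eta$. Since $\sum_i \lambda_i^2 = p\, n_0^2$, essentially all spectral mass lives on the $|L_H| = O(1/p)$ eigenvalues above $H$.

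Next, I would run the spectral clustering from Section~\ref{sec:lem3}: assemble $H_v = (\sqrt{\lambda_i}\bv_{i,v})_{i \in L_H} \in \RR^{L_H}$, bucket vertices by $\|H_u - H_v\|_\infty$ into parts $V_1,\ldots,V_t$, and obtain for each pair $(V_i,V_j)$ a model density $a_{ij}$ that is nearly constant on $V_i \times V_j$. The Frobenius bound $\sum_{u,v}(HH^\top - A_{G_0})^2_{uv} \le \eta\, p\, n_0^2$ keeps the impure-pair mass small compared to $p\, n_0^2$. I would then classify each pair as \emph{very high} (density $\ge c_0\sqrt{p}$), \emph{doubling} (density in $[2p,\, c_0\sqrt{p})$), or \emph{dilute} (density $< 2p$) for an absolute constant $c_0$ to be tuned.

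The case analysis then splits on the very-high structure. If some connected component $\mathcal C$ of the very-high incidence relation on parts has $\sum_{V_i \in \mathcal C}|V_i| \ge p\, n_0$, then $G_0$ restricted to this union is an induced subgraph on $\ge p\, n_0$ vertices with internal density close to $c_0\sqrt{p}$, giving conclusion~(2); the cherry argument (Claim~1 of Section~\ref{sec:lem3}) combined with Lemma~\ref{lem:least-pointwise-surplus} rules out a sparse pair wedged between two large very-high pairs, so the induced density really stays near the very-high threshold. Otherwise every very-high component has fewer than $p\, n_0$ vertices; the spectral concentration then forces the existence of a doubling pair, because a complete-multipartite-like configuration with uniformly dilute pairs would drive $-\lambda_n$ up to order $p\, n_0$, well beyond what Lemma~\ref{lem:least-pointwise-surplus} allows under our surplus hypothesis. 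Greedy aggregation---starting from the largest-area doubling pair and absorbing doubling-adjacent parts while maintaining density $\ge 2p$ on the running union---should then produce a union of total size $\ge n_0/8$, giving conclusion~(1).

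The main obstacle I anticipate is the low-density bookkeeping. In the $p = \Theta(1)$ regime of Section~\ref{sec:lem3}, ``dense'' and ``sparse'' could be thresholded absolutely at $1-\mu$ and $\mu$, and Claim~1 alone defeated any cherry. Here the thresholds scale with $p$, so Claim~1 rules out cherries only among \emph{large} parts, and spectrally invisible small parts may carry uncontrolled edges outside of clean high or dilute pairs. Showing that the greedy aggregation of doubling pairs actually reaches the $n_0/8$ threshold, and calibrating $\eta$ and the large/small part cutoff so that spurious contributions are absorbed by the slack in the two conclusions---the factor $1/8$ in the vertex count of (1), and the factor $10^{-10}$ in the density of (2)---is where the bulk of the technical work will lie.
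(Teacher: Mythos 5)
Your spectral-concentration setup is correct and matches the paper: invoking Lemma~\ref{lem:recursive-II}, Lemma~\ref{lem:least-pointwise-surplus}, Lemma~\ref{lem:surplus-energy}, and then Lemma~\ref{lem:solving-recursion-general} with the stated parameters to conclude that $\sum_{i \notin L_H} \lambda_i^2 \le \eta\, p\, n_0^2$ is exactly the first half of the paper's argument. The divergence, and the gap, is what you do next.

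You propose to translate the concentrated spectrum via the spectral-clustering machinery of Section~\ref{sec:lem3}: build $H_v$, bucket by $\|\cdot\|_\infty$, classify pairs as very-high/doubling/dilute, and greedily aggregate. This is the route the paper explicitly declines, and for a quantitative reason that kills the idea here. In Section~\ref{sec:lem3}, the number of buckets is $t = \lceil 2\gamma^{-10}+1\rceil^{\gamma^{-2}}$, where $\gamma$ is essentially the eigenvalue threshold divided by $n$. In Lemma~\ref{lem:density-increment}, the threshold $H$ has to be taken below $\Theta(p\, n_0)$, so the effective $\gamma$ is of order $p$, which may be as small as $n_0^{-1/200}$. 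Then $t$ is of order $\exp(\mathrm{poly}(n_0^{\rho}))$, i.e.\ vastly larger than $n_0$ itself. The partition becomes vacuous (every part has at most one vertex), and no amount of calibration or greedy aggregation salvages it. This is precisely the ``spectral partitioning cannot give polynomial-type bounds'' obstruction the paper names when introducing this lemma. Your proposal acknowledges that the bookkeeping is ``where the bulk of the technical work will lie,'' but the issue is not technical slack to be absorbed; the partition simply does not exist at this density.

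The paper instead combines two ingredients you do not mention. First, a degree-truncation preprocessing step: repeatedly delete vertices of degree at least $4p_0 n_0$, yielding a removed set $R$ with $|R| \le n_0/8$. If the residual density drops below $p_0/4$, most of the edge mass sits inside $R$, and any $\lceil n_0/8 \rceil$-vertex superset of $R$ gives conclusion~1 directly. Otherwise the residual graph $G$ has $p(G) \ge p_0/4$ and bounded maximum degree $\Delta \le 20\,p\,n$. Second, and crucially, the spectral concentration is cashed in not via clustering but via the moment identity $6\,t(G) = \sum_i \lambda_i^3$: the top-concentrated $L^2$-mass above $H \approx p^{3/2}n$ forces $\sum_{i\in L_H} \lambda_i^3 \gtrsim p^{5/2} n^3$, while $\sum_{i\notin L_0}(-\lambda_i)^3$ is small by the energy and $-\lambda_n$ bounds, so $G$ has $\gtrsim p^{5/2}n^3$ triangles. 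Averaging over vertices produces some $v_0$ with $e(G[N(v_0)]) \gtrsim p^{5/2}n^2$, and since $|N(v_0)| \le \Delta \le 20pn$ by the truncation step, a set of exactly $\lfloor 20pn\rfloor$ vertices containing $N(v_0)$ has density $\gtrsim \sqrt{p}$, giving conclusion~2. The degree truncation is not an optional bookkeeping device: without it, $N(v_0)$ could be a constant fraction of all of $V$, and the triangle density would not translate into an edge-density increment.
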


\begin{proof}
    Write $p_0 = p(G_0)$. Our first goal is to obtain control on the maximum degree. We repeat remove a vertex of $G_0$ with degree at least $4 p_0 n_0$, until no such vertex remains. Let $R$ be the set of removed vertices, and let $G$ be the subgraph of $G_0$ induced on the remaining vertices. As each step removes at least $4p_0n_0$ edges, we have $|R| \leq \frac{\frac{1}{2}p_0 n_0^2}{4p_0n_0} = \frac{n_0}{8}$.

    First, assume $p(G) \leq \frac{p_0}{4}$. Then the number of edges in $G = G_0[\bar{R}]$ is
    $$e(G_0[\bar{R}]) \leq \frac{1}{2} p(G)n_0^2 \leq \frac{p_0n_0^2}{8}.$$
    Furthermore, by the bound on surplus, the number of edges between $R$ and $\bar{R}$ is at most
    $$e(G_0[R, \bar{R}]) \leq \frac{e(G_0)}{2} + \surp(G_0) \leq \frac{p_0n_0^2}{4} + \frac{1}{2} n_0^{1 + \gamma} \leq \frac{p_0 n_0^2}{3}.$$
    Therefore, the total number of edges in $G_0[R]$ is at least
    $$e(G_0[R]) \geq \frac{p_0n_0^2}{2} - \frac{p_0 n_0^2}{3} - \frac{p_0n_0^2}{8} = \frac{1}{24} p_0 n_0^2.$$
    We let $W$ be any set of $\ceil{\frac{n_0}{8}}$ vertices containing $R$, and let $G_1 = G_0[W]$. Then we have
    $$p(G_1) \geq \frac{2e(G_0[R])}{|W|^2} \geq \frac{\frac{1}{24} p_0 n_0^2}{\frac{1}{63}n_0^2} \geq 2p_0$$
    so $G_1$ satisfies 1), as desired.

    In the rest of the argument, we assume $p(G) \geq \frac{p_0}{4}$. Write $p = p(G)$ and let $n$ be the number of vertices in $G$. We will repeatedly use the facts that 
    $$n = n_0 - |R| \geq \frac{7n_0}{8} \text{\quad and \quad} p \geq \frac{p_0}{4} \geq \frac{1}{4}n_0^{-\gamma} \geq \frac{1}{8} n^{-\gamma}.$$ 
    Let $\Delta$ be the maximum degree of $G$. By assumption, we have
    $$\Delta \leq 4p_0 n_0 \leq 20 pn.$$
    From now on, all graph parameters such as $\lambda_i, S_T, \cdots$ refer to that of $G$, unless otherwise specified. By \cref{prop:surplus--monotone} we have
    $$\surp^*(G) \leq \surp^*(G_0) \leq \frac{1}{2}n_0^{1 + \gamma} \leq n^{1 + \gamma}.$$
    We again run the recursive inequality \cref{lem:recursive-II}. This time, we pay extra attention to the exponent $s$. Set $c = \frac{1}{99}$. We check that 

    1) By \cref{lem:least-pointwise-surplus}, we have $-\lambda_n \leq (2n \surp^*(G))^{1/3} \leq 2 n^{\frac{2 + \gamma}{3}}$.

    2) By \cref{lem:surplus-energy}, we have
    $$\sum_{i} \abs{\lambda_i} = \cE(G) \leq 4 \surp^*(G) \leq 4n^{1 + \gamma}.$$
    3) By \cref{lem:recursive-II}, for $T \geq n^{1 - 2c}$ we have
    $$S_T^2 \leq 250n S_{\frac{T^2}{8n}}.$$
    To avoid duplicate notation, we write $p_e$ for the parameter $p$ in \cref{lem:solving-recursion-general}. Hence, \cref{lem:solving-recursion-general} applies with
    $$(p_e,q,r,C) = \left(\frac{2 + \gamma}{3}, 1 + \gamma, 1-2c, 250\right)$$
    giving
    $$s = \frac{q - 1}{1 - r} = \frac{\gamma}{2c} \leq \frac{1}{4}.$$
    Let $H = \frac{p^{3/2} n}{10^6}$. Recalling that $p \geq \frac{1}{8} n^{-\gamma}$, for $n \gg 1$ we have
    $$n^{p_e + q - 1} = n^{2/3 + 4\gamma / 3} \leq H$$
    By \cref{lem:solving-recursion-general}, we obtain
    $$\sum_{i \notin L_H} \lambda_i^2 \leq \frac{2 \cdot (250)^{1.25}}{0.75} \cdot n^{\frac{5}{4}} H^{\frac{3}{4}} = \frac{2 \cdot (250)^{1.25}}{0.75} \cdot n^{\frac{5}{4}} \left(\frac{p^{3/2} n}{10^6}\right)^{\frac{3}{4}} \leq \frac{1}{2} pn^2.$$
    As $G$ has exactly $\frac{1}{2}pn^2$ edges, we conclude that
    $$\sum_{i \in L_H} \lambda_i^2 = pn^2 - \sum_{i \notin L_H} \lambda_i^2 \geq \frac{1}{2} pn^2.$$
    Previously, we invoked \cref{lem:spectrum-to-clique} at this point. As we mentioned in the introduction, the spectral partitioning argument is unlikely to give polynomial-type bound. Here we diverge from previous arguments by considering $t(G)$, the number of triangles in $G$. A familiar fact in spectral graph theory is that
    $$6t(G) = \tr(A_G^3) = \sum_{i} \lambda_i^3.$$
    It is clear that
    $$\sum_{i = 1}^n \lambda_i^3 \geq \sum_{i \in L_H} \lambda_i^3 - \sum_{i \notin L_0} (-\lambda_i)^3.$$
    On one hand, we have
    $$\sum_{i \in L_H} \lambda_i^3 \geq H \sum_{i \in L_H} \lambda_i^2 \geq \frac{p^{3/2} n}{10^6} \cdot \frac{1}{2} pn^2 = \frac{1}{2 \cdot 10^6} p^{5/2} n^3.$$
    On the other hand, recalling our estimates $\cE(G) \leq 2n^{1 + \gamma}, -\lambda_{n} \leq n^{\frac{2 + \gamma}{3}}$, and $p \geq n^{-2\gamma}$, we have
    $$\sum_{i \notin L_0} (-\lambda_i)^3 \leq \frac{1}{2} \cE(G) \lambda_{n}^2  \leq n^{7/3 + 10\gamma / 3} \leq \frac{1}{4 \cdot 10^6} p^{5/2} n^3.$$
    So we conclude that
    $$6t(G) \geq \frac{1}{4 \cdot 10^6} p^{5/2} n^3$$
    On the graph side, we have
    $$6t(G) = \sum_{v \in V} 2 e(G[N(v)])$$
    where $G[N(v)]$ is the subgraph of $G$ induced on the neighbor $N(v)$ of $v$. Thus, by the pigeonhole principle, there exists some $v_0 \in V$ such that
    $$2e(G[N(v_0)]) \geq \frac{1}{4 \cdot 10^6} p^{5/2} n^2.$$
    Recal the maximum degree bound $\Delta \leq 20 pn$. Also recall that $p \geq \frac{p_0}{4}$ and $n \geq \frac{7}{8}n_0$. Let $W$ be any set of $\floor{20pn}$ vertices containing $N(v_0)$, and let $G_1 = G[W]$. Then the number of vertices in $G_1$ is at least
    $$\floor{20pn} \geq \floor{20 \cdot \frac{7}{8} \cdot \frac{1}{4}p_0n_0} \geq p_0 n_0.$$
    Furthermore, we have
    $$p(G_1) \geq \frac{2e(G[N(v)])}{(20pn)^2} \geq \frac{1}{4 \cdot 10^6 \cdot 20^2} p^{1/2} \geq 10^{-10} p_0^{1/2}.$$
    So $G_1$ satisfies 2), as desired.
\end{proof}
\begin{proof}[Proof of \cref{thm:polynomial}]
    We construct a sequence of induced subgraphs of $G$. Let $G_1 = G$, and $G_{i + 1}$ be the graph $G_1$ obtained by applying \cref{lem:density-increment} with $G_0 = G_i$. We repeat this until either of the following happens.

    1) $G_i$ has edge density at least $10^{-100}$.

    2) $G_i$ no longer satisfies an assumption of \cref{lem:density-increment}.

    Let $p_i = p(G_i)$, and let $n_i$ denote the number of vertices of $G_i$. Write $p = p_1$. We can check that $p_{i + 1} \geq 2p_i$ always holds, so the process terminates. Let $G_1, G_2, \cdots, G_k$ be the sequence of graphs obtained. We now claim that 1) is how the process terminates. The key observation is that $p_i^3 n_i$ is non--decreasing in $i$. Indeed, consider each $i \leq k - 1$. If $G_i$ falls into case 1) of \cref{lem:density-increment}, then we have
    $$p_{i + 1} \geq 2p_i, n_{i + 1} \geq \frac{n_i}{8}$$
    so $p_{i + 1}^3 n_{i + 1} \geq p_i^3 n_i$. If $G_i$ falls into case 2) of \cref{lem:density-increment}, then we have
    $$p_{i + 1} \geq 10^{-10} \sqrt{p_i}, n_{i + 1} \geq p_i n_i.$$
    So we have
    $$p_{i + 1}^3 n_{i + 1} \geq 10^{-30} p_i^{3/2 + 1} n_i \geq p_i^3 n_i$$
    where the last inequality uses $p_i \leq 10^{-100}$. Therefore, $p_i^3 n_i$ is non--decreasing in $i$, so we have
    $$p_k^3 n_k \geq p^3 n.$$
    Recall that $G$ satisfies $p \geq n^{-\rho}$ and $\surp(G) \leq n^{1 + \rho}$. Hence, we obtain
    $$n_k \geq p^3 n \geq n^{1 - 3\rho}$$
    and
    $$p_k \geq p = n^{-\rho}.$$
    By our choice of $\rho = \frac{1}{1000}$ and $\gamma = \frac{1}{200}$, we observe that $n_k \geq \sqrt{n}$ is sufficiently large for \cref{lem:density-increment} to apply. We also have
    $$p_k \geq n_0^{-\rho} \geq n_k^{-\frac{\rho}{1 - 3\rho}} \geq n_k^{-\gamma}$$
    and by \cref{prop:surplus--monotone}
    $$\surp^*(G_k) \leq \surp^*(G_0) \leq O(\log n \cdot \surp(G_0)) \leq O(\log n \cdot n^{1 + \rho}) \leq \frac{1}{2} n_k^{1 + \gamma}.$$
    Therefore, \cref{lem:density-increment} applies to $G_k$. As the process terminates, we must have 
    $$p_k \geq 10^{-100}.$$
    We just showed that
    $$\surp(G_k) \leq \surp^*(G_k) \leq \frac{1}{2} n_k^{1 + \gamma}.$$
    Hence, \cref{thm:discrepancy} shows that $G_k$ is $10^{-300} \epsilon^3$-close to a disjoint union of clique, given that $n \gg_\epsilon 1$.
    
    Let $\cC_0, \cC_1, \cdots, \cC_\ell$ be the disjoint cliques, of size $c_0 \geq \cdots \geq c_{\ell}$. As $G_k$ has exactly $\frac{1}{2} p_k n_k^2$ edges, we have
    $$\binom{c_0}{2} + \cdots + \binom{c_\ell}{2} \geq \frac{1}{2} p_k n_k^2 - 10^{-300} \epsilon^3 n_k^2.$$
    As the cliques are disjoint, we have $c_0 + \cdots + c_{\ell} \leq n_k$. Therefore, we have
    $$c_0 \geq \frac{1}{2} p_k n_k - 10^{-300} \epsilon^3 n_k \geq \frac{1}{3} p_k n_k.$$
    We set $H = G_k[V(\cC_0)]$. Then $H$ is missing at most $10^{-300} \epsilon^3 n_k^2$ edges, so we conclude that the induced subgraph $G_k[V(\cC_0)]$ of $G$ has edge density at least
    $$\frac{2\binom{c_0}{2} - 2 \cdot 10^{-300} \epsilon^3 n_k^2}{c_0^2} \geq 1 - \frac{c_0 + 2 \cdot 10^{-300} \epsilon^3 n_k^2}{c_0^2} \geq 1 - \frac{27 \cdot 10^{-300} \epsilon^3}{p_k^2} \geq 1 - \frac{27 \cdot 10^{-300} \epsilon^3}{10^{-200}} \geq 1 - \epsilon.$$
    Furthermore, the number of vertices of $H$ is at least
    $$\frac{1}{3}p_k n_k \geq \frac{1}{3} \cdot 10^{-100} \cdot n^{1 - 3\rho} \geq n^{1 - 4\rho}$$
    as desired.
\end{proof}
\bibliographystyle{plain}
\bibliography{bib}
\end{document}